\newcommand{\R}{\mathbb{R}}
\newcommand{\C}{\mathbb{C}}
\newcommand{\N}{\mathbb{N}}
\newcommand{\m}{\mathfrak{m}}
\newcommand{\BB}{\mathcal{B}}
\newcommand{\Id}{\operatorname{Id}}
\newcommand{\Ker}{{\operatorname{Ker}}}
\newcommand{\Hess}{{\operatorname{Hess}}}
\newcommand{\ord}{{\operatorname{ord}}}
\newcommand{\Ap}{{\operatorname{Ap}}}
\newcommand{\gr}{{\operatorname{gr}}}
\newcommand{\Ann}{{\operatorname{Ann}}}
\newcommand{\pmt}[1]{\begin{pmatrix}#1\end{pmatrix}}
\newcommand{\bmt}[1]{\begin{bmatrix}#1\end{bmatrix}}
\def\aa{{\bf a}}
\newtheorem{pro}{Proposition}[section]
\newtheorem{Lem}[pro]{Lemma}
\newtheorem{Theo}[pro]{Theorem}
\newtheorem*{Theoetoile}{Theorem} 
\theoremstyle{definition}
\newtheorem{Defi}[pro]{Definition}
\numberwithin{equation}{section}
\let\epsilon\varepsilon
\let\kappa=\varkappa
\title{{\footnotesize  The weak Lefschetz property for Artinian Gorenstein algebras of codimension three}}
\date{\today}
\author{Rosa M. Mir\'o-Roig}
\address{Universitat de Barcelona, Departament de Matem\`atiques i Inform\`atica, Gran Via de les Corts Catalanes 585, 08007 Barcelona, Spain.}
\email{miro@ub.edu}
\author{Quang Hoa Tran}
\address{University of Education, Hue University,  34 Le Loi St., Hue City, Vietnam.}
\email{tranquanghoa@hueuni.edu.vn}
\begin{document}
\maketitle
\begin{abstract}
We study the weak Lefschetz property of  a class of graded Artinian Gorenstein algebras of codimension three associated in a natural way to the Ap\'ery set of a numerical semigroup generated by four natural numbers. We show that  these algebras have the weak Lefschetz property whenever the initial degree  of their defining ideal  is small.\\
{{\footnotesize  \textsc{Keywords}:   Ap\'ery set,  Artinian Gorenstein algebras,  Hessians, Macaulay dual generators, numerical semigroups, weak Lefschetz property.}}\\
\texttt{MSC2010}: primary 13E10, 13H10; secondary 13A30, 13C40.
\end{abstract}

\section{Introduction}
The weak Lefschetz property (WLP for short) for an Artinian graded algebra $A$ over a field $K$ simply says that there exists a linear form $L$ that induces, for each $i$, a multiplication map $\times L: [A]_{i}\longrightarrow [A]_{i+1}$ that has maximal rank, i.e. that is either injective or surjective. At first glance this might seem to be a simple problem of linear algebra. However, determining which graded Artinian $K$-algebras have the WLP is notoriously difficult. Many authors have studied the problem from many different points of view, applying tools from representation theory, topology, vector bundle theory, plane partitions, splines, differential geometry, among others (see for instance \cite{BK2007, HSS2011,  Watanabe2013, MMO2013,  MRT19, Stanley80}). The role of the characteristic of $K$ in this problem has also been an important, and only superficially understood, aspect of these studies.

One of the most interesting open problems in this field is whether all codimension 3 graded Artinian Gorenstein algebras have the WLP in characteristic zero. In the special case of codimension 3 complete intersections, a positive answer was obtained in characteristic zero in \cite{HMNW2003} using the Grauert-M\"{u}lich theorem. For positive characteristic, on the other hand, only the case of monomial complete intersections has been studied (see \cite{BK2011,CookII2012,CN2011}), applying many different approaches from combinatorics.

For the case of codimension 3 Gorenstein algebras that are not necessarily complete intersections, it is known that for each possible Hilbert function an example exists having the WLP \cite{Harima1995}. Some partial results are given in \cite{MZ2008} to show that for certain Hilbert functions, all such Gorenstion algebras have the WLP. It was shown in \cite{BMMNZ14} that  all codimension 3 Artinian Gorenstein algebras of socle degree at most 6 have the WLP in characteristic zero.  But the general case remains completely open.

In this work, we consider a class of graded Artinian Gorenstein algebras of codimension 3 built up starting from the  Ap\'ery set of a numerical semigroup generated by 4 natural numbers. Our goal is to study whether these algebras have the WLP.  More precisely,  we consider a numerical semigroup $P$ generated by  $\{a_1,a_2,a_3,a_4\}\subset \N^4$ such that $\gcd(a_1,a_2,a_3,a_4)=1$.  The \textit{Ap\'ery set $\Ap(P)$ of $P$} with respect to the minimal generator of the semigroup is defined as follows
$$\Ap(P):=\{a\in P\mid a-a_1\notin P\}=\{0=\omega_1<\omega_2<\cdots<\omega_{a_1}\}.$$
Notice that $\Ap(P)$ is a finite set and $\# \Ap(P)=a_1.$  Recall that a numerical semigroup $P$ is said to be \textit{$M$-pure symmetric} if for each $i=1,\ldots,a_1$, $\omega_i+\omega_{a_1-i+1}=\omega_{a_1}$ and $\ord(\omega_i)+\ord(\omega_{a_1-i+1})=\ord(\omega_{a_1})$, where 
$$ \ord(a):=\max \{ \sum_{i=1}^{4} \lambda_i \mid  a= \sum_{i=1}^{4} \lambda_ia_i  \}$$
is the \textit{order} of $a\in P$. Therefore the Ap\'ery set of a $M$-pure symmetric semigroup has the structure of a symmetric lattice.

Let $K$ be a field of characteristic zero and consider the homomorphism 
$$\Phi: S:=K[x_1,\ldots,x_4]\longrightarrow K[P]:=K[t^{a_1},\ldots,t^{a_4}],$$
which sends $x_i\longmapsto t^{a_i}.$ Then $K[P]\cong S/\Ker(\Phi)$ is a one dimensional ring associated to $P$. Now set $\overline{S} =S/(x_1).$ Then there is one to one correspondence between the elements of $\Ap(P)$ and the generators of $\overline{S}$ as a $K$-vector space. Let $\overline{\m}$ be the maximal homogeneous ideal of $\overline{S}$, define the \textit{associated graded algebra} of the Ap\'ery set of $P$
$$A=\gr_{\overline{\m}}(\overline{S}):=\bigoplus_{i\geq0}\frac{\overline{\m}^i}{\overline{\m}^{i+1}}.$$
It follows that $A$ is a standard graded Artinian $K$-algebra. In the work \cite{Bryant2010}, Bryant proved that $A$ is Gorenstein if and only if $P$ is $M$-pure symmetric. In \cite{Gu2018}, Guerrieri showed that if $A$ is an Artinian Gorenstein algebra that is not a complete intersection, then $A$ is of form $A=R/I$ with $R=K[x,y,z]$ and  
\begin{align} \label{idealGorenstein}
I=(x^a, y^b-x^\alpha z^\gamma, z^c, x^{a-\alpha}y^{b-\beta}, y^{b-\beta}z^{c-\gamma})\subset R,
\end{align}
where $1\leq \alpha\leq a-1,\; 1\leq \beta\leq b-1,\; 1\leq \gamma\leq c-1$ and $\alpha+\gamma =b.$ The integers $a,b,c,\alpha,\beta$ and $\gamma$ are determined by the structure of $\Ap(P)$, see \cite[Section 5]{Gu2018}.

Our goal is to study the WLP for $A$. Our main result is the following (see Theorems~\ref{Theorem3.6} and~\ref{Theorem3.13}).
\begin{Theoetoile}
Consider the ideal $I$ as in \eqref{idealGorenstein}. If one of the integers $a,b$ and $c$ is less than or equal to three, then $R/I$ has the WLP.
\end{Theoetoile}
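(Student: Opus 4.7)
The plan is to exploit a natural symmetry of $I$, reduce the statement to two essentially independent cases, and then verify WLP in each by a judicious choice of linear form and a short case analysis. The involution of $R$ swapping $x$ with $z$ sends an ideal of the form \eqref{idealGorenstein} with parameters $(a,b,c,\alpha,\beta,\gamma)$ to another ideal of the same form with parameters $(c,b,a,\gamma,\beta,\alpha)$, using that the relation $\alpha+\gamma=b$ is symmetric in $\alpha,\gamma$. Consequently the case $a\le 3$ is equivalent to $c\le 3$, and it suffices to handle separately (i) $a\le 3$ and (ii) $b\le 3$, corresponding to Theorems~\ref{Theorem3.6} and~\ref{Theorem3.13}.

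For each case I would try $L=x+y+z$ as a candidate Lefschetz element, though any generic $L$ would do since $\chara K=0$. Because $A=R/I$ is Artinian Gorenstein of socle degree $s$ with symmetric Hilbert function $h_A$, Gorenstein duality implies that $\times L:A_i\to A_{i+1}$ is surjective for $i\ge s/2$ as soon as it is injective for $i<s/2$; so WLP reduces to proving injectivity below the midpoint. Two complementary tools are available: (a) compute $h_{A/LA}$ directly from the presentation $A/LA\cong R/(I+(L))$, eliminating one variable, and compare with the first difference of $h_A$; or (b) apply the Hessian criterion of Maeno--Watanabe to the Macaulay dual generator $F$ of $A$ (readable from~\cite{Gu2018}), showing non-vanishing of the relevant Hessian at the coordinates of $L$.

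The hypothesis that one of $a,b,c$ is at most $3$ cuts $I$ down to a finite list of shapes. For $a=2$, the constraints $\alpha+\gamma=b$ and $1\le\alpha\le 1$ force $\alpha=1$, $\gamma=b-1$, so $I=(x^2,\,y^b-xz^{b-1},\,z^c,\,xy^{b-\beta},\,y^{b-\beta}z^{c-b+1})$ with $(b,c,\beta)$ free; $a=3$ doubles this via $\alpha\in\{1,2\}$. The analogous enumeration for $b\le 3$ gives a similarly short list (e.g.\ $b=2$ forces $\alpha=\gamma=\beta=1$). In each subcase, the single binomial relation $y^b\equiv x^\alpha z^\gamma$ in $A$ lets one rewrite every monomial in normal form with $y$-exponent less than $b$, yielding an explicit monomial $K$-basis and hence $h_A$ by inspection.

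The main obstacle will be verifying injectivity of $\times L$ in every degree $i<s/2$ uniformly in the remaining free parameters; equivalently, controlling the colon ideal $(I:L)$ below the midpoint. In the subcases with the smallest $a,b,c$ the normal-form basis makes this a finite combinatorial check, but when it becomes unwieldy --- typically for $b=3$ with both $a$ and $c$ large --- I would fall back on the Hessian criterion, where the sparse, near-monomial shape of $F$ reduces non-vanishing of the appropriate Hessian determinant to a polynomial identity that can be settled by exhibiting a single numerical point at which the determinant is non-zero, then invoking genericity.
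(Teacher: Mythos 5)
Your framework is the right one and coincides with the paper's: the $x\leftrightarrow z$ symmetry, the reduction of the WLP for a Gorenstein algebra to a single middle-degree rank condition, and the two tools of eliminating a variable in $R/(I,L)$ versus the Hessian criterion for the Macaulay dual generator are exactly the ingredients used. The problem is that your plan stops where the actual proof begins, and the one mechanism you offer for closing the hard cases would fail. The hypothesis that one of $a,b,c$ is at most $3$ does \emph{not} reduce the statement to a finite check: even after fixing the ``shape'' of $I$, the remaining parameters (e.g.\ $a,b,\beta$ when $c=2$, or $a,b,\alpha,\beta,\gamma$ when $c=3$) range over infinitely many values, the Hessian or multiplication matrix has size growing with them, and its determinant is a \emph{different} polynomial for each choice of parameters. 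Hence ``exhibiting a single numerical point at which the determinant is non-zero, then invoking genericity'' only handles genericity in the linear form $L$ for one fixed ideal; it proves nothing about the other members of the family. What is actually required --- and what constitutes the bulk of the paper --- is a uniform structural argument: monomial bases chosen and ordered so that the middle Hessian of $F=\sum_i x^{a-1-i\alpha}y^{(i+1)b-1-\beta}z^{c-1-i\gamma}$ becomes anti-triangular with nonzero anti-diagonal entries, or so that the matrix of $\times L$ is block-triangular with blocks whose invertibility holds for all parameter values (in one subcase via a Toeplitz-matrix lemma), together with separate induction arguments showing $[R/(I,L)]_{k+1}=0$ in the cases where elimination is used.

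You also miss the paper's key device for taming the parameter $\beta$: by Proposition~\ref{Proposition3.1} one has $A_\beta=A_{\beta-1}/(0\colon_{A_{\beta-1}}y)$, and Proposition~\ref{Proposition2.7} transfers the WLP from $A_{\beta-1}$ (ultimately from the complete intersection $G=R/(x^a,y^b-x^\alpha z^\gamma,z^c)$, which has the WLP by Harima--Migliore--Nagel--Watanabe) to $A_\beta$ whenever the socle degree is odd, or is even with the Hilbert function flat at the middle; the Hilbert-function lemmas then dispose of all but a handful of boundary subcases, and only those are attacked by explicit determinant computations. Without this induction on $\beta$, or an equally uniform nonvanishing argument valid for every parameter value, your plan leaves infinitely many cases unverified.
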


\section{Artinian Gorenstein algebras}
In this section, we will recall some standard notations and known facts that will be needed later in this work. We fix $K$ a field of characteristic zero and $R=K[x_1,\ldots,x_n]$ a standard graded homogeneous polynomial ring in $n$ variables over $K$. Let
$$A=R/I=\bigoplus_{i=0}^D [A]_i$$
be a graded Artinian algebra. Note that $A$ is finite dimensional over $K.$

\begin{Defi}
For any graded Artinian  algebra $A=R/I=\bigoplus_{i=0}^D [A]_i$, the \textit{Hilbert function} of $A$ is the function
$$h_A: \N\longrightarrow \N$$
defined by $h_A(t)=\dim_K [A]_t$. As $A$ is Artinian, its Hilbert function is equal to its \textit{$h$-vector} that one can express as a sequence
$$\underline{h}_A=(1=h_0,h_1,h_2,h_3,\ldots, h_D),$$
with $h_i=h_A(i)>0$ and $D$ is the last index with this property. The integer $D$ is called the \textit{socle degree} of $A$. The $h$-vector  $\underline{h}_A$ is said to be \textit{symmetric} if $h_{D-i}=h_i$ for every $i=0,1,\ldots,\lfloor\frac{D}{2}\rfloor.$ 
\end{Defi}

\begin{Defi} \cite[Proposition~2.1]{MW2009}
A standard graded Artinian algebra $A$ as above is Gorenstein if and only if $h_D=1$ and the multiplication map 
$$[A]_i\times [A]_{D-i}\longrightarrow [A]_D\cong K$$ is a perfect pairing for all $i=0,1,\ldots,\lfloor\frac{D}{2}\rfloor.$
\end{Defi}
It follows that the $h$-vector of a graded Artinian Gorenstein is symmetric.

\begin{Defi}
A graded Artinian $K$-algebra $A$ is said to have the \textit{weak Lefschetz property}, briefly WLP, if there exists an element $L\in [A]_1$ such that the multiplication map $\times L: [A]_i\longrightarrow [A]_{i+1}$ has maximal rank for each $i$. We also say that a homogeneous ideal $I$ has the WLP if $R/I$ has the WLP.
\end{Defi}

From now on, we only consider a standard graded Artinian Gorenstein $K$-algebra. For these algebras, the WLP is determined by considering only the multiplication map in one degree. 
\begin{pro}\cite[Proposition~2.1]{MMN2011}\label{Proposition2.5}
Let $A$ be a standard graded Artinian Gorenstein $K$-algebra with the socle degree $D$ and $k:=\lfloor\frac{D}{2}\rfloor.$ Then we have: 
\begin{enumerate}
\item[\rm (i)]  If $D$ is odd, $A$ has the WLP if and only if there is an element $L\in [A]_1$ such that the multiplication map $\times L: [A]_k\longrightarrow[A] _{k+1}$ is an isomorphism. 
\item[\rm (ii)]  If $D$ is even, $A$ has the WLP if and only if there is an element $L\in [A]_1$ such that the multiplication map $\times L: [A]_k\longrightarrow[A] _{k+1}$ is surjective or equivalently the  multiplication map $\times L: [A]_{k-1}\longrightarrow[A] _{k}$ is injective. 
\end{enumerate}
\end{pro}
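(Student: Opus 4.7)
The plan rests on two structural features of a graded Artinian Gorenstein algebra $A$: the perfect pairing $[A]_i \times [A]_{D-i} \to [A]_D \cong K$, which identifies $[A]_{D-i}$ canonically with the linear dual $[A]_i^{*}$, and the fact that the socle $(0:_A \m)$ is one-dimensional and concentrated in degree $D$. The first task is to record the resulting \emph{duality identification}: under the isomorphisms $[A]_{D-i} \cong [A]_i^{*}$ and $[A]_{D-i-1} \cong [A]_{i+1}^{*}$, the transpose of $\times L : [A]_i \to [A]_{i+1}$ is precisely the map $\times L : [A]_{D-i-1} \to [A]_{D-i}$, because $Lf \cdot g$ and $f \cdot Lg$ have the same image in $[A]_D$. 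Consequently the two maps have equal rank, and $\times L$ is injective on $[A]_i$ if and only if $\times L$ is surjective on $[A]_{D-i-1}$.

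The second ingredient is a short \emph{propagation lemma}: if $\times L : [A]_{i+1} \to [A]_{i+2}$ is injective and $0 \le i < D$, then $\times L : [A]_i \to [A]_{i+1}$ is injective. Indeed, given $g \in [A]_i$ with $Lg = 0$, for every $\ell \in [A]_1$ we have $\ell g \in [A]_{i+1}$ and $L(\ell g) = \ell(Lg) = 0$, whence $\ell g = 0$ by hypothesis; thus $[A]_1 \cdot g = 0$, and since $\m$ is generated by $[A]_1$ we deduce $\m g = 0$, placing $g$ in the socle, which forces $g = 0$ because $i < D$. Dually, by the transpose identification, surjectivity of $\times L$ at any degree propagates upward.

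These two ingredients settle both cases. For part (i) with $D = 2k+1$, we have $D-k-1 = k$, so the middle map $\times L : [A]_k \to [A]_{k+1}$ is self-transpose between spaces of equal dimension $h_k = h_{k+1}$, hence injective iff surjective iff an isomorphism; assuming it is an isomorphism, propagating injectivity downward produces injections in every degree $\le k$, and dualizing produces surjections in every degree $\ge k$, so the same $L$ witnesses the WLP. For part (ii) with $D = 2k$, the transpose identification makes surjectivity of $\times L : [A]_k \to [A]_{k+1}$ equivalent to injectivity of $\times L : [A]_{k-1} \to [A]_k$; from either condition, the same downward and upward propagation delivers maximal rank in every degree. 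The converses are immediate, since WLP in particular forces maximal rank at the middle degree, and under the equality $h_k = h_{k+1}$ (odd $D$) or the inequality $h_k \ge h_{k+1}$ forced by the propagation argument applied to any linear form witnessing WLP (even $D$), this maximal rank is exactly the asserted isomorphism or surjectivity. The only subtle step---and the main thing to verify carefully---is the transpose identification for $\times L$ under the Gorenstein pairing; everything else reduces to the socle argument together with bookkeeping on the symmetric Hilbert function.
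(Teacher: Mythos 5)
Your proposal is correct, and it reconstructs precisely the standard argument for this result: the transpose identification of $\times L \colon [A]_i \to [A]_{i+1}$ with $\times L \colon [A]_{D-i-1} \to [A]_{D-i}$ under the Gorenstein pairing, the socle-based downward propagation of injectivity together with its dual upward propagation of surjectivity, and the correct treatment of the only delicate point, namely that in the even case a maximal-rank middle map cannot be injective-but-not-surjective (your duality-plus-propagation contradiction handles this). Note that the paper itself offers no proof of this statement---it is quoted from Proposition~2.1 of the cited reference MMN2011---and your argument is essentially the one found there, so the approaches coincide.
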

\begin{pro}\cite[Theorem 2.1]{Gu2018}\label{Proposition2.7}
Assume that $G=\bigoplus_{i=0}^D [G]_i$ is a standard graded Artinian Gorenstein $K$-algebra with the socle degree $D$ that has the WLP. If $\ell\in [G]_1$ is a linear element, then the quotient ring
$$A=\frac{G}{(0\colon_G \ell)}$$
is also a standard graded Artinian Gorenstein $K$-algebra. Assume that $G$ and $A$ have the same codimension and set $k:=\lfloor\frac{D}{2}\rfloor.$ Then
\begin{enumerate}
\item [\rm (i)] If  $D$  is odd, then $A$ has the WLP.
\item [\rm (ii)] If $D$  is even and $\dim_K [G]_{k-1}=\dim_K [G]_{k}$, then  $A$  has the WLP.
\end{enumerate}
\end{pro}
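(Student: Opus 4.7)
The plan is to apply the criterion of Proposition~\ref{Proposition2.5} to $A$, using a Lefschetz element of $G$ transported through the natural identification $[A]_i\cong \ell\cdot[G]_i$.

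First, I would analyze the structure of $A$. The multiplication-by-$\ell$ map $G(-1)\to G$ has kernel $(0:_G\ell)(-1)$ and image $\ell G$, so it induces a graded $K$-vector space isomorphism $A=G/(0:_G\ell)\xrightarrow{\sim}\ell G$ that raises degree by one; concretely,
$$[A]_i\;\cong\;\ell\cdot[G]_i\;\subseteq\;[G]_{i+1}\qquad\text{via}\qquad\overline{g}\longmapsto \ell g.$$
Via Macaulay's inverse systems, writing $G=R/\Ann(F)$ for a dual generator $F$ of degree $D$, one has $A\cong R/\Ann(\ell\circ F)$, so $A$ is Gorenstein of socle degree $D-1$, with $h_A(i)=\dim_K\ell[G]_i$.

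Second, I would pick a Lefschetz element of $G$ and check compatibility. The hypothesis that $G$ and $A$ share codimension forces $[(0:_G\ell)]_1=0$, so $[A]_1=[G]_1$; choose $L_G\in[G]_1$ realising the WLP of $G$, and let $L$ denote its class in $[A]_1$. Under the identification above, the multiplication $\times L\colon[A]_i\to[A]_{i+1}$ corresponds exactly to the restriction of $\times L_G\colon[G]_{i+1}\to[G]_{i+2}$ to the subspace $\ell[G]_i$, with image contained in $\ell[G]_{i+1}$ by the commutativity $L_G(\ell g)=\ell(L_G g)$.

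Third, I would apply Proposition~\ref{Proposition2.5} to $A$, whose socle degree is $D-1$. In case~(i), $D$ is odd, so $D-1$ is even with midpoint $\lfloor(D-1)/2\rfloor=k$; Proposition~\ref{Proposition2.5}(ii) demands surjectivity of $\times L\colon[A]_k\to[A]_{k+1}$. The Gorenstein symmetry of $h_G$ gives $h_G(k)=h_G(D-k)=h_G(k+1)$ (since $D-k=k+1$), whence the WLP of $G$ makes $\times L_G\colon[G]_k\to[G]_{k+1}$ bijective, and $L_G\cdot\ell[G]_k=\ell\cdot L_G[G]_k=\ell[G]_{k+1}$ gives the required surjectivity. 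In case~(ii), $D$ is even, so $D-1$ is odd with midpoint $\lfloor(D-1)/2\rfloor=k-1$; Proposition~\ref{Proposition2.5}(i) demands an isomorphism $\times L\colon[A]_{k-1}\to[A]_k$. The hypothesis $\dim_K[G]_{k-1}=\dim_K[G]_k$ combined with the WLP of $G$ forces $\times L_G\colon[G]_{k-1}\to[G]_k$ to be bijective, and the same restriction argument yields the isomorphism on $\ell$-images.

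The main obstacle—everything else being bookkeeping—is the compatibility step in the second paragraph: verifying that under $[A]_i\cong\ell[G]_i$ the multiplication $\times L$ on $A$ becomes \emph{exactly} the restriction of $\times L_G$ on $G$, so that a bijection on a single graded piece of $G$ restricts to a bijection between $\ell[G]_i$ and $\ell[G]_{i+1}$ (and not merely an injection into the strictly larger space $[G]_{i+1}$). After that, the proof reduces to Gorenstein symmetry of $h_G$, the codimension hypothesis, and the dimension hypothesis in case~(ii).
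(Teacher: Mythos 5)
Your argument is correct: the identification $[A]_i\cong \ell[G]_i$ (equivalently $A\cong R/\Ann_R(\ell\circ F)$, Gorenstein of socle degree $D-1$), the transport of a Lefschetz element $L_G$ of $G$ to $L\in[A]_1$, and the bookkeeping against Proposition~\ref{Proposition2.5} all check out, with the one half-stated point---injectivity in case~(ii)---following automatically because the surjection $\ell[G]_{k-1}\to\ell[G]_k$ is between spaces of equal dimension by the Gorenstein symmetry $h_A(k-1)=h_A(k)$ of the socle-degree-$(2k-1)$ algebra $A$. Note that the paper itself contains no proof of this statement, quoting it as \cite[Theorem 2.1]{Gu2018}; your route via $A\cong \ell G(-1)$ and the shifted dual generator $\ell\circ F$ is essentially the standard argument underlying that citation.
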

An important tool needed to study whether a Gorenstein algebra has the WLP is the Macaulay inverse system, and especially  the higher Hessians. We give now some definitions and results taken from a paper by Maeno and Watanabe \cite{MW2009} and from a recent paper by Gondim and Zappal\'a \cite{GZ2018}. The general facts on the Macaulay's inverse system can be seen in \cite{Eil18}.

Now we regard $R$ as an $R$-module via the operation ``$\circ$'' defined by
\begin{align*}
&R\times R \longrightarrow R\\
& (x^\alpha,x^\beta) \longmapsto x^\alpha \circ x^\beta =\begin{cases}
x^{\beta -\alpha}&\text{if}\quad \beta_i \geq \alpha_i,\forall i=1,\ldots,n\\
0 & \text{otherwise}
\end{cases}
\end{align*}
with $x^\alpha =x_1^{\alpha_1}\cdots x_n^{\alpha_n}$ and $x^\beta =x_1^{\beta_1}\cdots x_n^{\beta_n}$. For a polynomial $F\in R$, $\Ann_R(F)$ denotes
$$\Ann_R(F):=\{f\in R\mid f\circ F =0  \} $$
which is an ideal of $R$. It is called  the \textit{annihilator} of $F$. It is known that $R/\Ann_R(F)$ is an Artinian Gorenstein algebra. Furthermore, every Artinian Gorenstein algebra can be written in this form. More precisely, we have the following.
\begin{pro}\cite[Theorem 2.1]{MW2009}
Let $I$ be an ideal of $R$ and $A =R/I$ the quotient algebra. Denote by $\m$ the homogeneous maximal ideal of $R$. Then
$\sqrt{I}=\m$ and the $K$-algebra $A$ is Gorenstein if and only if there exists a polynomial $F\in R$  such that $I=\Ann_R(F)$.
\end{pro}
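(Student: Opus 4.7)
The plan is to derive both implications from Macaulay's theory of inverse systems, which realizes Matlis duality for the local ring $R_{\m}$ in the concrete form of the contraction pairing. I would show that every $\m$-primary ideal $I \subset R$ corresponds to the finite-dimensional $R$-submodule
\[
I^{\perp} := \{G \in R : h \circ G = 0 \text{ for all } h \in I\}
\]
of $(R,\circ)$, and that $R/I$ is Gorenstein exactly when $I^{\perp}$ is cyclic; the statement then reduces to exhibiting the Macaulay dual generator $F$ of the cyclic module $I^{\perp}$.

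For the ``if'' direction, suppose $I = \Ann_R(F)$ with $F \in R$ of degree $d$. Then $x_i^{d+1} \circ F = 0$ for each $i$, so $x_i^{d+1} \in I$ and $\m^{N} \subset I$ for some $N$, giving $\sqrt{I} = \m$. To see that $A = R/I$ is Gorenstein, I introduce the $K$-bilinear form
\[
\langle f, G \rangle := \text{constant term of } f \circ G,
\]
which on monomials is $\langle x^{\alpha}, x^{\beta}\rangle = \delta_{\alpha,\beta}$ and satisfies the adjunction identity $\langle fg, G \rangle = \langle g, f \circ G \rangle$. Since $I \circ F = 0$, it descends to a pairing $R/I \times R \circ F \to K$, and checking on monomials this pairing is non-degenerate on both sides (non-degeneracy in the first slot is precisely the identity $I = \Ann_R(F)$). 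Under such a perfect adjoint pairing, the socle of $A$ is $K$-dual to the top $(R\circ F)/(\m \circ (R\circ F))$, which is one dimensional by Nakayama because $R \circ F$ is cyclic. Hence $\dim_K \mathrm{soc}(A) = 1$ and $A$ is Gorenstein.

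For the ``only if'' direction, assume $\sqrt{I} = \m$ and $A = R/I$ is Gorenstein. From $\m^{N} \subset I$ one sees $I^{\perp}$ consists of polynomials of degree less than $N$ and is therefore finite dimensional. The same contraction form induces a pairing $R/I \times I^{\perp} \to K$; using the adjunction identity and tracking monomials, this pairing is non-degenerate on both sides, identifying $I^{\perp}$ with the $K$-linear dual of $A$. Under this identification $\mathrm{soc}(A)$ corresponds to the top $I^{\perp}/(\m \circ I^{\perp})$, so the Gorenstein hypothesis forces $I^{\perp}$ to have a single minimal generator, i.e., $I^{\perp} = R \circ F$ for some $F \in R$. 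Finally, $h \in I$ iff $h$ annihilates every element of $I^{\perp}$ iff $h \circ F = 0$, giving $I = \Ann_R(F)$.

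The main obstacle is verifying that the contraction form is a perfect pairing between $R/I$ and $I^{\perp}$, together with the exchange of socle and top under it: this is the double-annihilator property $(I^{\perp})^{\perp} = I$ in disguise, and it is the point where the divided-power contraction action (rather than ordinary multiplication) is essential. Abstractly this is Matlis duality applied to the injective hull of $K$ over $R_{\m}$, but a self-contained monomial-level verification is entirely feasible once the adjunction identity $\langle fg, G \rangle = \langle g, f \circ G\rangle$ is in hand.
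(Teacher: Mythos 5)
The paper offers no proof of this proposition: it is quoted directly from \cite{MW2009} (Macaulay's double annihilator theorem), so there is no internal argument to compare against. Your proof is the standard inverse-systems argument, essentially the one in the cited source, and it is correct in substance: the adjunction identity $\langle fg,G\rangle=\langle g,f\circ G\rangle$ is right, the descent of the pairing to $R/I\times R\circ F$ and its non-degeneracy in the ``if'' direction are verified correctly (non-degeneracy in the first slot is indeed the equality $I=\Ann_R(F)$), and the socle--top exchange $\mathrm{soc}(A)\cong\bigl(I^{\perp}/(\m\circ I^{\perp})\bigr)^{*}$ follows from $\langle x_if,G\rangle=\langle f,x_i\circ G\rangle$ exactly as you say.

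The one compressed step is the one you flag: perfection of the pairing $R/I\times I^{\perp}\to K$ in the ``only if'' direction, i.e.\ $(I^{\perp})^{\perp}=I$. You are right that no Matlis duality is needed; to make it airtight, note that since $\m^{N}\subset I$ the whole computation lives in the perfect pairing $R/\m^{N}\times R_{<N}\to K$ given by dual monomial bases, and for a subspace $U$ of one side of a perfect pairing of finite-dimensional spaces the induced pairing $(V/U)\times U^{\perp}\to K$ is again perfect; applying this to $U=I/\m^{N}$ gives both $\dim_K I^{\perp}=\dim_K R/I$ and $(I^{\perp})^{\perp}=I$. One small lemma is needed to invoke this: the linear-algebra perp of $I$ (constant term of $h\circ G$ vanishes for all $h\in I$) coincides with your module-theoretic $I^{\perp}$ (all of $h\circ G$ vanishes), which follows from adjunction because $I$ is an ideal: $\langle gh,G\rangle=0$ for all monomials $g$ extracts every coefficient of $h\circ G$. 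Two trivia worth making explicit: $F$ must be nonzero (otherwise $\Ann_R(F)=R$), and Nakayama is legitimate for the $R$-module $I^{\perp}$ under $\circ$ because $\m^{N}\circ I^{\perp}=0$ makes it a module over the Artinian local ring $R/\m^{N}$, so a one-dimensional top does force $I^{\perp}=R\circ F$.
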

The polynomial $F$ in the above proposition is called the \textit{Macaulay dual generator} of $A=R/\Ann_R(F).$ Furthermore, if $F$ is a homogeneous polynomial of degree $D$, then $R/\Ann_R(F)$ is a graded Artinian Gorenstein algebra of socle degree $D.$
\begin{Defi}
Let $F$ be a polynomial in $R$ and $d,k\geq 1$  be two integers. Assume that $\mathcal{B}_d=\{ \alpha_i\}_{i=1}^s$ and  $\mathcal{B}_k=\{ \beta_j\}_{=1}^t$ form respectively the $K$-linear basis of $[A]_d$ and $[A]_k$. We define the mixed Hessian of $F$ as an $(s\times t)$-matrix
$$\Hess^{d,k}_{\mathcal{B}_d,\mathcal{B}_k} (F):= \pmt{(\alpha_i\cdot \beta_j)\circ F}.$$
In particular, if $d=k$, then we define the $d$-th Hessian of $F$ as a square matrix
$$\Hess^{d}_{\mathcal{B}_d} (F):= \pmt{(\alpha_i\cdot \alpha_j)\circ F}.$$
\end{Defi}
Notice that the singularity of these matrices  is independent of the chosen basis and hence we can write simply $\Hess^{d} (F)$ and $\Hess^{d,k} (F)$. Based on the singularity of (mixed) Hessians of $F$, we can determine  the WLP of $A=R/\Ann_R(F)$.
\begin{pro}\cite{GZ2018} \label{Proposition2.8_Hessian}
Assume that $A=R/\Ann_R(F)$ with $F\in [R]_D$ and $k:=\lfloor\frac{D}{2}\rfloor$. Then we have:
\begin{enumerate}
\item [\rm (i)] If $D$ is odd, then $A$ has the WLP if and only if the Hessian $\Hess^{k} (F)$ has maximal rank, i.e., it has nonzero determinant.
\item [\rm (ii)] If $D$ is even, then  $A$ has the WLP if and only if the mixed Hessian $\Hess^{k-1,k} (F)$ has maximal rank.
\end{enumerate}
\end{pro}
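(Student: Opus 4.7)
The plan is to apply the Hessian criterion of Proposition~\ref{Proposition2.8_Hessian}. The first step is to compute the Macaulay dual generator $F \in R$ of $A = R/I$ explicitly. The monomial generators of $I$ forbid certain exponent patterns in $F$, while the binomial relation $y^b - x^\alpha z^\gamma$ imposes a compatibility between the coefficients of the surviving monomials; solving these constraints one obtains $F$ as an explicit short sum of monomials of degree $D = \deg F$ (the socle degree of $A$). From $F$ one reads off $D$ and $k := \lfloor D/2\rfloor$, and it then suffices to check that $\Hess^{k}(F)$ has nonzero determinant when $D$ is odd, or that the mixed Hessian $\Hess^{k-1,k}(F)$ has maximal rank when $D$ is even.

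The proof splits along the two theorems cited. In Theorem~\ref{Theorem3.6} I would handle the case when one of $a$ or $c$ is at most $3$. The involution exchanging $(x,a,\alpha)$ with $(z,c,\gamma)$ preserves the form of $I$, so it suffices to treat $a \leq 3$. The constraint $1 \leq \alpha \leq a-1$ forces $\alpha = 1$ when $a = 2$ and $\alpha \in \{1,2\}$ when $a = 3$, so that $F$ collapses to a very short sum and its (mixed) Hessian can be written down in closed form. In Theorem~\ref{Theorem3.13} I would treat $b \leq 3$; the constraint $\alpha + \gamma = b$ with $\alpha, \gamma \geq 1$ leaves only the sub-cases $(\alpha,\gamma) = (1,1)$ (for $b=2$) and $(\alpha,\gamma) \in \{(1,2),(2,1)\}$ (for $b=3$), and the same direct analysis applies to each.

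In each sub-case I would choose a monomial basis of $[A]_k$ (and $[A]_{k-1}$ when needed) ordered by $y$-degree, so that the (mixed) Hessian acquires a block structure. Since applying a pure monomial $x^iy^jz^\ell$ to $F$ shifts the $y$-degree in a controlled way, the matrix exhibits a checkerboard-like pattern, which reduces the determinant computation to smaller determinants along the anti-diagonal blocks; this exploits the perfect pairing $[A]_k \times [A]_{D-k} \to [A]_D \cong K$ dictated by the Gorenstein property.

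The principal obstacle I anticipate is controlling the Hessian determinant when the two remaining parameters grow unbounded. The matrix size grows with $\dim_K [A]_k$, which itself depends on both remaining parameters, so the determinant cannot simply be exhibited as a fixed expression. I would manage this by induction on one of the remaining parameters, or by specialising the Hessian entries at a well-chosen point to reduce the determinant to a factorable expression in binomial coefficients and powers of the parameters, whose nonvanishing is guaranteed by the assumption $\chara K = 0$.
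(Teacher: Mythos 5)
Your proposal does not prove the statement at hand --- it presupposes it. The statement to be proved is the Gondim--Zappal\`a criterion itself: for $A=R/\Ann_R(F)$ with $F\in[R]_D$, the WLP of $A$ is \emph{equivalent} to the maximal rank of $\Hess^{k}(F)$ (for $D$ odd) or of $\Hess^{k-1,k}(F)$ (for $D$ even). Your first sentence announces ``the plan is to apply the Hessian criterion of Proposition~\ref{Proposition2.8_Hessian},'' and everything that follows --- computing the dual generator of the specific ideal $I$ from \eqref{IdealGorenstein}, the case split on $a,b,c\leq 3$, the block/anti-diagonal determinant computations --- is a sketch of the paper's Section~3 (the proofs of Theorems~\ref{Theorem3.6} and~\ref{Theorem3.13}), i.e.\ of \emph{applications} of the proposition, not of the proposition. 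As an argument for the stated equivalence this is circular, and it is also logically misdirected: the proposition concerns an arbitrary Artinian Gorenstein algebra $R/\Ann_R(F)$ in any number of variables, so no analysis of the particular five-generator ideal $I$, however complete, could establish it. (For what it is worth, the paper itself gives no proof either; it quotes the result from \cite{GZ2018}, with the odd-degree case going back to Maeno--Watanabe \cite{MW2009}.)

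What an actual proof needs, and what is entirely missing from your write-up, is the following chain: (a) by Proposition~\ref{Proposition2.5}, WLP for Gorenstein $A$ reduces to a single multiplication map, namely $\times L\colon [A]_k\to[A]_{k+1}$ (an isomorphism when $D=2k+1$) or $\times L\colon [A]_{k-1}\to[A]_k$ (injective when $D=2k$); (b) the key identification: writing $L=a_1x_1+\cdots+a_nx_n$, the perfect pairing $[A]_i\times[A]_{D-i}\to[A]_D\cong K$ induced by $F$ identifies the matrix of $\times L$ in the relevant degrees, taken with respect to a basis $\mathcal{B}_k$ (resp.\ $\mathcal{B}_{k-1},\mathcal{B}_k$) and the pairing-dual basis, with the matrix $\bigl((\alpha_i\alpha_j)\circ F\bigr)$ (resp.\ $\bigl((\alpha_i\beta_j)\circ F\bigr)$) \emph{evaluated at the point} $(a_1,\ldots,a_n)$ --- note that each entry is a form of degree $D-2k=1$ (resp.\ $D-(2k-1)=1$), so evaluation makes sense and is where $L$ enters; and (c) a genericity argument: ``there exists $L$ with $\times L$ of maximal rank'' holds if and only if the Hessian determinant (resp.\ the ideal of maximal minors of the mixed Hessian) is not identically zero as a polynomial, since maximal rank is a Zariski-open condition on the coefficients of $L$ and $K$ is infinite. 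Your appeal to the perfect pairing gestures at (b) but is used only to organize a determinant of a specific matrix, never to prove the equivalence; steps (a) and (c), which carry the logical content of the ``if and only if,'' do not appear at all.
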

We close this section by recalling a result on the WLP of codimension 3 Artinian Gorenstein algebras.
\begin{pro}\cite[Corollary~3.12]{BMMNZ14}\label{Proposition2.6}
In characteristic zero, all codimension 3 Artinian Gorenstein algebras of socle degree at most 6 have the WLP.
\end{pro}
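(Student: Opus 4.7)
The plan is to combine the reductions of Propositions~\ref{Proposition2.5} and~\ref{Proposition2.8_Hessian} with the classical theorem of Hesse and, for the higher socle degrees, the Buchsbaum--Eisenbud structure theorem for codimension three Gorenstein ideals. Since every codimension three Gorenstein $h$-vector is symmetric (and is an SI-sequence by Stanley), for socle degree $D\le 6$ the admissible $h$-vectors form a short finite list, ranging from $(1,3,1)$ at $D=2$ up to shapes $(1,3,h_2,h_3,h_2,3,1)$ at $D=6$ subject to Macaulay's bound. Writing $A=R/\Ann_R(F)$ with $R=K[x_1,x_2,x_3]$ and $F\in R_D$ the Macaulay dual generator, WLP is equivalent by Proposition~\ref{Proposition2.8_Hessian} to the maximal rank of $\Hess^{k}(F)$ or $\Hess^{k-1,k}(F)$, where $k=\lfloor D/2\rfloor\le 3$.

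For $D\le 4$ the relevant Hessian is $\Hess^1(F)$ (when $D=3$) or a mixed Hessian $\Hess^{d,e}(F)$ with $d+e\le 3$ (when $D=2$ or $4$). These are all controlled by the classical Hesse theorem in characteristic zero: if the $3\times 3$ Hessian of a form $F$ in three variables vanishes identically, then up to a linear change of coordinates $F$ is a polynomial in at most two variables, forcing $h_1\le 2$ and contradicting $h_1=3$. For $D=4$ one needs the $3\times h_2$ mixed Hessian $\Hess^{1,2}(F)$ to have rank $3$; since its three rows are, up to signs, the derivatives $x_i\circ F\in R_3$ and these must span $[A]_3$ (of dimension $h_3=h_1=3$) by Gorenstein duality, maximal rank follows from non-degeneracy of the Macaulay pairing.

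The main work lies in $D=5$ and $D=6$, where the Hessians are larger and a bare Hesse-type argument no longer suffices. Here I would invoke the Buchsbaum--Eisenbud structure theorem, which presents $I=\Ann_R(F)$ via a skew-symmetric matrix of odd size $2s+1$ whose sub-Pfaffians generate $I$ and whose numerical shifts are determined by the $h$-vector. For each admissible $h$-vector at $D=5,6$, this description pins down the shape of $I$, and hence of $F$, tightly enough to verify case by case that the required Hessian has maximal rank --- either by explicit computation on a suitably chosen normal form, or by assuming a rank drop and extracting structural constraints that conflict with the prescribed $h$-vector. The hardest step is $D=6$ with large middle values such as $(1,3,6,7,6,3,1)$, where $\Hess^{2,3}(F)$ is a $6\times 7$ matrix of cubic entries; there a careful analysis of the skew-symmetric relation matrix appearing in the Buchsbaum--Eisenbud resolution seems to be unavoidable, and is the real obstacle to a short, uniform argument.
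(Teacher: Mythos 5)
There is nothing in the paper to compare against: Proposition~\ref{Proposition2.6} is quoted from \cite[Corollary~3.12]{BMMNZ14} and the paper gives no proof, so your argument must stand on its own --- and as written it is a programme, not a proof. The half that works is the easy half: for $D\le 3$ the reduction via Propositions~\ref{Proposition2.5} and~\ref{Proposition2.8_Hessian} leaves only the classical Hessian, and in characteristic zero and three variables the Hesse--Gordan--Noether theorem (vanishing Hessian forces a cone, contradicting codimension $3$) settles it, as you say. But your $D=4$ step already contains a false inference: the perfect pairing gives $K$-linear independence of the three rows of $\Hess^{1,2}(F)$, and $K$-independence of rows of a matrix of linear forms does not imply maximal rank over the function field --- the rows $(x,0)$ and $(y,0)$ are $K$-independent yet generically proportional. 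The standard repair is to note that the bilinear form $(u,v)\mapsto (uvL^{2})\circ F$ on $[A]_1$ is, up to a nonzero scalar, the classical Hessian matrix of $F$ evaluated at the point dual to $L$; Gordan--Noether then makes $\times L^{2}\colon [A]_1\to[A]_3$ an isomorphism for general $L$, whence $\times L\colon [A]_1\to [A]_2$ is injective and Proposition~\ref{Proposition2.5}(ii) applies.

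The genuine gap is that $D=5$ and $D=6$ --- the entire substance of the proposition --- are deferred rather than proved: you invoke the Buchsbaum--Eisenbud structure theorem and then promise either ``explicit computation on a suitably chosen normal form'' or ``structural constraints'' extracted from a rank drop, without exhibiting a single normal form, computation, or constraint. This omission is not routine. Nothing soft forces $\det\Hess^{2}(F)\neq 0$ here: the Hesse--Gordan--Noether phenomenon does not propagate to higher Hessians, and there are well-known examples (Ikeda; Gondim) of non-cone forms whose higher Hessians vanish identically, which is precisely why the middle degrees are hard. Moreover, fixing an admissible $h$-vector (say $(1,3,6,7,6,3,1)$) does not pin $F$ down to a finite list: the Pfaffian presentation of $\Ann_R(F)$ determines only the graded Betti numbers, and the corresponding families of dual generators are positive-dimensional, so ``case by case on $h$-vectors'' cannot terminate in a finite verification without a substantive new idea. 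It is worth knowing that \cite{BMMNZ14} proves Corollary~3.12 by a different route altogether --- a direct study of the maps $\times L$ at the level of Hilbert functions, using restriction-type bounds and liaison-theoretic information about points in $\PP^2$, not Hessians or skew-symmetric normal forms --- which is some evidence that the Hessian/Pfaffian path you sketch is not merely unfinished but genuinely the harder road. Your proposal correctly locates the difficulty; it does not close it.
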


\section{The WLP for class of Artinian Gorenstein algebras of codimension 3}

From now on, let $R=K[x,y,z]$ be the standard graded polynomial ring over a field $K$ of characteristic zero and consider the ideal 
\begin{align}\label{IdealGorenstein}
I=(x^a, y^b-x^\alpha z^\gamma, z^c, x^{a-\alpha}y^{b-\beta}, y^{b-\beta}z^{c-\gamma})\subset R,
\end{align}
where $1\leq \alpha\leq a-1,\, 1\leq \beta\leq b-1$ and $1\leq \gamma\leq c-1$ such that $\alpha+\gamma =b.$ It is clear that $b\leq a+c-2$ and by symmetry of $x$ and $z$, without loss of generality, we assume that $a\geq c$. 
First, we have the following.

\begin{pro} \label{Proposition3.1}
 Fix $a,b,c,\alpha,\beta,\gamma$ as above. Set $\aa= (x^a, y^b-x^\alpha z^\gamma, z^c)$. Then one has:
\begin{enumerate}
\item[\rm (i)] $I=\aa \colon_R y^\beta.$ Therefore, $R/I$ is an Artinian Gorenstein of codimension 3 and the socle degree of $R/I$ is $D=a+b+c-\beta-3.$
\item[\rm (ii)]  The Macaulay dual generator of $R/I$ is
$$F=\sum_{i=0}^{m}x^{a-1-i\alpha}y^{(i+1)b-1-\beta}z^{c-1-i\gamma},$$
where $m:=\max\{j\mid a-1-j\alpha\geq 0\; \text{and}\; c-1-j\gamma\geq 0\}.$
\item[\rm (iii)]  The free resolution of $R/I$ is
\begin{align*}
{\footnotesize  \xymatrix{ 0\longrightarrow R(-a-b-c+\beta) \ar[r] & {\begin{array}{c} R(-a-b+\beta)\\ \oplus \\R(-a-c+\beta)\\\oplus \\R(-b-c+\beta)\\\oplus \\R(-a-\gamma)\\ \oplus \\R(-c-\alpha) \end{array}}\ar[r]^M &  {\begin{array}{c} R(-a)\\ \oplus \\R(-b)\\\oplus \\R(-c)\\\oplus \\R(-a-\gamma+\beta)\\ \oplus \\R(-c-\alpha+\beta ) \end{array}}\ar[r] & R \ar[r]&R/I\longrightarrow 0,}}
\end{align*}
where $M$ is a skew-symmetric matrix
$$M=\bmt{0& y^{b-\beta} & 0 & -x^\alpha & 0 \\ -y^{b-\beta} &0 & z^\gamma & 0 & 0 \\ 0& -z^\gamma & 0 & y^\beta & -x^{a-\alpha}\\ x^\alpha &0 & -y^\beta & 0 & z^{c-\gamma}\\ 0& 0 & x^{a-\alpha}& -z^{c-\gamma} & 0}.$$
\end{enumerate}
\end{pro}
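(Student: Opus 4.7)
The plan is to prove the three parts in the order (iii), (i), (ii), using the Buchsbaum--Eisenbud structure theorem for codimension-three Gorenstein ideals as the engine for (iii) and deriving the other two from it.

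For (iii), I compute the five $4\times 4$ Pfaffians of the skew-symmetric matrix $M$ and check that, after incorporating the alternating signs of the Buchsbaum--Eisenbud convention, they reproduce exactly the five generators of $I$. The representative calculation is the Pfaffian obtained by deleting row and column $5$, which equals $y^{b-\beta}\cdot y^\beta+(-x^\alpha)\cdot z^\gamma=y^b-x^\alpha z^\gamma$; the other four give $z^c,\,-x^a,\,y^{b-\beta}z^{c-\gamma},\,-x^{a-\alpha}y^{b-\beta}$. To invoke the structure theorem I also verify $\codim(I)=3$: since $x^a,z^c\in I$ one has $V(I)\subseteq V(x,z)$, and modulo $(x,z)$ the generator $y^b-x^\alpha z^\gamma$ reduces to $y^b$, forcing $V(I)=\{0\}$. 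The shifts appearing in the resolution are then dictated by the degrees of the generators ($a,\,b,\,c,\,a+\gamma-\beta,\,c+\alpha-\beta$, using $\alpha+\gamma=b$) together with the self-duality of codimension-three Gorenstein resolutions, which replaces each right-hand shift $-g_i$ by the middle shift $-(D+3-g_i)$ with $D+3=a+b+c-\beta$.

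From (iii), $R/I$ is Artinian Gorenstein of codimension $3$ with socle degree $D=a+b+c-\beta-3$, so for part (i) I still need only identify $I$ with $\aa\colon_R y^\beta$. The ideal $\aa$ is a complete intersection since its zero locus is $\{0\}$ by the same argument as above, so $R/\aa$ is Gorenstein with socle degree $a+b+c-3$, and since $\beta<b$ the form $y^\beta$ is not in $\aa$, whence Macaulay duality yields that $R/(\aa\colon_R y^\beta)$ is Gorenstein with socle degree $(a+b+c-3)-\beta=D$. The inclusion $I\subseteq \aa\colon_R y^\beta$ reduces to the identities $x^{a-\alpha}y^b=x^{a-\alpha}(y^b-x^\alpha z^\gamma)+x^a z^\gamma\in\aa$ and $y^b z^{c-\gamma}=z^{c-\gamma}(y^b-x^\alpha z^\gamma)+x^\alpha z^c\in\aa$, the other three generators being sent into $\aa$ trivially. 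The induced surjection $R/I\twoheadrightarrow R/(\aa\colon_R y^\beta)$ between Artinian Gorenstein algebras of the same socle degree must be an isomorphism, since any nonzero ideal in an Artinian Gorenstein algebra meets its one-dimensional top socle while the target itself has nonzero top degree $D$.

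For (ii), I verify directly that every generator of $I$ annihilates $F$ under the contraction action. The cases $x^a$ and $z^c$ vanish by degree on each summand. For $(y^b-x^\alpha z^\gamma)\circ F$, applying $y^b$ to the $i$-th summand gives a term with $y$-exponent $ib-1-\beta$, zero for $i=0$, so $y^b\circ F=\sum_{i=1}^m x^{a-1-i\alpha}y^{ib-1-\beta}z^{c-1-i\gamma}$; applying $x^\alpha z^\gamma$ to the $i$-th summand and reindexing $j=i+1$ yields, using $\alpha+\gamma=b$, exactly the same sum, with the would-be $j=m+1$ term vanishing by definition of $m$, so the two contributions cancel. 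For $x^{a-\alpha}y^{b-\beta}\circ F$ and $y^{b-\beta}z^{c-\gamma}\circ F$, the $y$-exponent $b-1-\beta-(b-\beta)=-1$ kills the $i=0$ summand while the $x$- or $z$-exponent is negative for every $i\geq 1$. Hence $I\subseteq\Ann_R(F)$; since $F$ is nonzero homogeneous of degree $D$ and $R/I$ is Gorenstein of the same socle degree by (i), the essential-socle argument from the previous paragraph forces $\Ann_R(F)=I$, identifying $F$ as the Macaulay dual generator.

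The main obstacle is the bookkeeping in part (iii): computing the Pfaffians of $M$ with the correct signs and matching the middle free module shifts, since the matrix as displayed has a row/column ordering that differs by a permutation from the natural Pfaffian indexing compatible with self-duality, and one has to identify this permutation so that each entry of $M$ has the homogeneous degree required by the resolution.
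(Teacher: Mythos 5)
Your proposal is correct, but it is organized quite differently from the paper's proof, which runs (i) $\Rightarrow$ (ii) $\Rightarrow$ (iii): the paper gets (i) from linkage (``since $\aa$ is a complete intersection, $I=\aa\colon_R y^\beta$ is Gorenstein'', with the identification $I=\aa\colon_R y^\beta$ essentially imported from Guerrieri's construction), gets (ii) by citing a lemma of Kustin--Ulrich to upgrade the inclusion $I\subseteq\Ann_R(F)$ to an equality, and only then obtains (iii) from the Buchsbaum--Eisenbud structure theorem together with a mapping-cone computation on the link. You instead start from (iii), using the \emph{converse} direction of Buchsbaum--Eisenbud: the five $4\times4$ Pfaffians of $M$ (which you compute correctly, e.g.\ deleting index $4$ gives $-x^{a-\alpha}y^{b-\beta}$ and deleting index $5$ gives $y^b-x^\alpha z^\gamma$) generate $I$, and $\codim I=3$ since $V(I)\subseteq V(x,z)$ forces $V(I)=\{0\}$; this yields Gorenstein-ness and the resolution at once. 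From there your socle-degree comparison (a nonzero ideal of a graded Artinian Gorenstein algebra contains the one-dimensional top socle, so a surjection between two such algebras of equal socle degree is an isomorphism) replaces both the linkage citation in (i) and the Kustin--Ulrich lemma in (ii), and your explicit contraction computation showing $(y^b-x^\alpha z^\gamma)\circ F=0$ via the reindexing $j=i+1$ is exactly the verification the paper leaves implicit. What your route buys is self-containedness -- every claim is checked from the presentation of $I$ -- at the cost of the Pfaffian/degree bookkeeping you flag at the end; and indeed the permutation issue you anticipate is real (the displayed row/column order of $M$ is homogeneous for the target ordering $R(-c)\oplus R(-a)\oplus R(-c-\alpha+\beta)\oplus R(-a-\gamma+\beta)\oplus R(-b)$, a permutation of the displayed direct sum), but it is resolvable exactly as you describe, so there is no gap.
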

\begin{proof}
Firstly, since $\aa$ is a complete intersection, $I=\aa\colon y^\beta$ is Gorenstein. This proves (i). It is known that $R/\Ann_R(F)$ is an Artinian Gorenstein algebra of socle degree $a+b+c-\beta-3$. Since $I\subset \Ann_R(F)$ and $R/I$ is an Artinian Gorenstein algebra, by \cite[Lemma 1.1]{KU1992}, $I=\Ann_R(F)$. The item (ii) is proved. Finally, (iii) is implied from the structure theorem of Gorenstein ideals of codimension 3 and also from a standard mapping cone computation.
\end{proof}

One of the interesting open problems  is whether all codimension 3 graded Artinian Gorenstein algebras have the WLP in characteristic zero. Now let $I$ be an ideal as in \eqref{IdealGorenstein}.  By the above proposition, $R/I$ is a  graded Artinian Gorenstein algebra of codimension 3, hence we are interested in studying the WLP for $R/I$. In the next subsections, we will prove that $R/I$ has the WLP whenever the initial degree of $I$ is at most three. In the paper, we denote by $\Id$ the identity matrix and by $M^t$ the transpose matrix of a matrix $M$.
\subsection{The ideal  $I$ contains a quadric.}
In this subsection, we consider the simplest case where the ideal $I$ contains a quadric.

The first case is $b=2$, hence $\alpha=\beta=\gamma=1$. We obtain the following result.
  
\begin{pro} \label{Propositionb=2}
Let $I$ be the ideal
$$I=(x^a, y^2-xz, z^c, x^{a-1}y, yz^{c-1})\subset R$$
with $a\geq c\geq  2.$ Then $R/I$ has the WLP.
\end{pro}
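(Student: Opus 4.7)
My strategy is to use the relation $y^2\equiv xz\pmod I$ to split $A=R/I$ as a direct sum of two $K[x,z]$-submodules, each of which is a codimension two monomial complete intersection (up to a degree shift), and then invoke the classical WLP for such algebras.

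First, modulo $I$ we have $y^2 = xz$ together with $x^a = z^c = x^{a-1}y = yz^{c-1} = 0$, so every monomial in $A$ reduces to one of the form $x^iz^k$ with $0\le i\le a-1$, $0\le k\le c-1$, or $x^iyz^k$ with $0\le i\le a-2$, $0\le k\le c-2$. Comparing dimensions via the short exact sequence
\[ 0\to A(-1)\xrightarrow{\cdot y} R/\mathfrak{a}\to R/(\mathfrak{a},y)\to 0,\qquad \mathfrak{a} = (x^a, y^2-xz, z^c), \]
(where $R/\mathfrak{a}$ is the codimension three complete intersection of Proposition \ref{Proposition3.1}), one checks that these monomials form a $K$-basis of $A$. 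This exhibits a direct sum decomposition of graded $K[x,z]$-submodules
\[ A = B\oplus yC,\qquad B\cong K[x,z]/(x^a, z^c),\qquad yC\cong \bigl(K[x,z]/(x^{a-1}, z^{c-1})\bigr)(-1). \]

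Next, I would take the linear form $L = x+z\in[A]_1$. Since $L$ has no $y$-component, multiplication by $L$ preserves the decomposition:
\[ [A]_d = [B]_d\oplus y\cdot[C]_{d-1}\xrightarrow{\;\times L\;} [B]_{d+1}\oplus y\cdot[C]_d = [A]_{d+1}. \]
Both $B$ and $C$ are codimension two monomial complete intersections over a field of characteristic zero, so they have the WLP (in fact the SLP), with $L = x+z$ as a Lefschetz element.

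By Proposition \ref{Proposition2.5}, to verify WLP for the Gorenstein algebra $A$ it suffices to check $\times L$ at the middle degree. Set $D = a+c-2$ and $k = \lfloor D/2\rfloor$. Using the explicit plateau-shaped Hilbert function of $K[x,z]/(x^p, z^q)$ (flat of value $\min(p,q)$ between degrees $\min(p,q)-1$ and $\max(p,q)-1$, strictly increasing before and strictly decreasing after), a short case analysis yields: if $D$ is odd, then $\dim[B]_k = \dim[B]_{k+1}$ and $\dim[C]_{k-1} = \dim[C]_k$, so the maximal-rank property of $\times L$ on each summand forces an isomorphism, and hence $\times L\colon[A]_k\to[A]_{k+1}$ is an isomorphism; if $D$ is even, then $\dim[B]_k\ge\dim[B]_{k+1}$ and $\dim[C]_{k-1}\ge\dim[C]_k$, so the maximal-rank property on each summand is surjectivity, and hence $\times L\colon[A]_k\to[A]_{k+1}$ is surjective. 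In either case, $\times L$ has maximal rank at the middle degree, so $A$ has the WLP.

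The main obstacle I expect is in the final step: verifying that the maximal-rank behavior on the two summands combines into maximal rank on $A$. This requires careful bookkeeping of which degrees fall within the plateau of the Hilbert functions of $B$ and $C$, with separate attention to the boundary subcase $a = c$ (where $D$ is necessarily even and the plateau of $B$ collapses to the single degree $a-1$). The identification of the basis in the first step is also somewhat delicate, but reduces to a direct calculation once the Hilbert function of $A$ is read off from the exact sequence above.
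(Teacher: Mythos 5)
Your proof is correct, and it takes a genuinely different route from the one in the paper. The paper chooses the linear form $L=x-y+z$, notes that by Proposition~\ref{Proposition2.5} and the symmetry of the $h$-vector it suffices to prove $[R/(I,L)]_{k+1}=0$ for $k=\lfloor\frac{a+c-2}{2}\rfloor$, identifies $R/(I,L)$ with $K[x,z]/J$ where $J=(x^a,x^2+xz+z^2,z^c,x^{a-1}z,xz^{c-1})$, and kills every monomial of degree $k+1$ by a short induction driven by the quadric $x^2+xz+z^2$. You instead read off the $K[x,z]$-module structure of $A$: the decomposition $A=B\oplus yC$ is valid (spanning is immediate from the relations, and independence follows from the dimension count $ac+(a-1)(c-1)=2ac-a-c+1=\dim_K A$ obtained from your exact sequence), it is preserved by $L=x+z$, and the bookkeeping you flag as the main obstacle does go through: when $D$ is odd one has $a>c$, both $k$ and $k+1$ lie in the common plateau $[c-1,a-1]$ of the Hilbert functions of $B$ and $yC$, so both summand maps are isomorphisms; when $D$ is even, symmetry and unimodality of $h_B$ about $D/2=k$ and of $h_{yC}$ about $(D-1)/2$ give $h_B(k+1)\le h_B(k)$ and $h_{yC}(k+1)\le h_{yC}(k)$, so both summand maps are surjective, and a direct sum of surjections is surjective. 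The trade-off: the paper's computation is shorter and completely self-contained, whereas your argument imports the classical fact that $x+z$ is a Lefschetz element for $K[x,z]/(x^p,z^q)$ in characteristic zero (available from \cite{HMNW2003} together with the torus action on monomial ideals); in return it is more structural, explains the shape of the Hilbert function of $A$, and --- since $B$ and $yC$ have their increasing, flat, and decreasing ranges in exactly the same degrees --- actually shows that $\times(x+z)$ has maximal rank in every degree, not only the middle one.
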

\begin{proof} 
The socle degree of $R/I$ is $D=a+c-2$. Set
$k:=\lfloor\frac{D}{2}\rfloor =\lfloor\frac{a+c-2}{2}\rfloor.$ Set $L=x-y+z$. By Proposition~\ref{Proposition2.5}, it is enough to show that
$$\times L: [R/I]_k\longrightarrow [R/I]_{k+1}$$
is surjective, or equivalently $[R/(I,L)]_{k+1}=0.$ We have that
\begin{align*}
R/(I,L)\cong K[x,z]/J,
\end{align*}
where $J=(x^a,x^2+xz+z^2,z^c,x^{a-1}z, xz^{c-1})$. We will prove that $[K[x,z]/J]_{k+1}=0$, or equivalently $x^iz^{k+1-i}\in J$ for all $0\leq i\leq k+1.$ We do it by induction on $i$. As $a\geq c,$ hence $c\leq k+1$. It follows that $z^{k+1}$ and $xz^k$ belong to $J.$ For any $i\geq 2$, one has
$$x^iz^{k+1-i}=x^2x^{i-2}z^{k+1-i} =-(z^2+xz)x^{i-2}z^{k+1-i} =-x^{i-2}z^{k+3-i}-x^{i-1}z^{k+2-i}\in J,$$
by the induction hypothesis . 
\end{proof}

We now study the case $a=2$ or $c=2$. By symmetry of $x$ and $z$, WLOG, we can assume $a\geq c=2$. Therefore $\gamma=1$ and $a\geq \alpha+1=b.$ More precisely, we consider the ideal
$$I_\beta=(x^a, y^b-x^{b-1} z, z^2, x^{a-b+1}y^{b-\beta}, y^{b-\beta}z)\subset R,$$
with  $1\leq \beta \leq b-1$ and $a\geq b$. Set $A_\beta =R/I_\beta$. By Proposition~\ref{Proposition3.1}, the free resolution of $A_\beta$ is
\begin{align}\label{freeresolution}
 {\footnotesize \xymatrix{ 0\ar[r] & R(-a-b-2+\beta) \ar[r] & {\begin{array}{c} R(-a-2+\beta)\\\oplus \\R(-b-2+\beta) \\ \oplus \\R(-a-b+\beta)\\\oplus  \\R(-a-1)\\ \oplus \\R(-b-1)  \end{array}}\ar[r] &  {\begin{array}{c} R(-2)\\\oplus \\  R(-a)\\\oplus \\R(-b)\\ \oplus\\ R(-a-1+\beta) \\\oplus \\R(-b-1+\beta) \end{array}}\ar[r] & R\ar[r] & A_\beta \ar[r] &  0}.}
\end{align}
Since we have the free resolution \eqref{freeresolution} of $A_\beta$, for any integer $j\geq 2$, we get
\begin{align}\label{Congthu3.3}
\quad H_{A_\beta} (j)=& 2j+1 -\binom{j-a+1}{1}-\binom{j-b+1}{1}-\binom{j-a+\beta}{1}-\binom{j-b+\beta}{1}\\
&+\binom{j-a-b+\beta+1}{1}+\binom{j-a-b+\beta}{1}\nonumber,
\end{align}
with convention $\binom{n}{m}=0$ if $n<m.$ By Proposition~\ref{Proposition3.1}, the  socle degree of $A_\beta$ is $D=a+b-\beta-1$. Set $k:=\lfloor \frac{D}{2}\rfloor$. Then  $k-a<0$ and $k-a-b+\beta<0$, it follows from \eqref{Congthu3.3}  that
\begin{align} \label{Congthuchilbertfunction}
H_{A_\beta} (k)& = 2k+1 -\binom{k-b+1}{1}-\binom{k-a+\beta}{1}-\binom{k-b+\beta}{1}.
\end{align}
The Hilbert function of $A_\beta$ in degree $k$ is determined as follows.
\begin{Lem}\label{lemmakey}
For every $1\leq \beta\leq b-1$, one has
\begin{align*}
H_{A_\beta} (k)&=\begin{cases} 
2b-\beta&\text{if}\quad \beta \leq a-b\\
a+b-2\beta+1&\text{if}\quad \beta\geq a-b+1.
\end{cases}
\end{align*}
Furthermore, if $1\leq \beta \leq a-b-1$, then 
\begin{align*}
H_{A_\beta} (k) =H_{A_\beta} (k-1).
\end{align*}
\end{Lem}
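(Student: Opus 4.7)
The plan is to compute $H_{A_\beta}(k)$ directly from \eqref{Congthuchilbertfunction} by substituting $k=\lfloor(a+b-\beta-1)/2\rfloor$ and tracking which of the three binomials $\binom{k-b+1}{1}$, $\binom{k-a+\beta}{1}$, $\binom{k-b+\beta}{1}$ vanish under the convention $\binom{n}{1}=0$ for $n<1$. I would split according to the parity of $D=a+b-\beta-1$. In the even case one has $k-b+1=(a-b-\beta+1)/2$ and $k-a+\beta=-(k-b+1)$; in the odd case $k-b+1=(a-b-\beta)/2$ and $k-a+\beta=-1-(k-b+1)$. In both parities, $k-b+1$ and $k-a+\beta$ have opposite non-strict signs, while $k-b+\beta\geq 0$ follows from $a\geq b$ and $\beta\geq 1$ (the only potential degeneracy $a=b$, $\beta=1$ forces $D$ to be even, and can be handled separately).

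Next I split on the sign of $a-b-\beta$. If $\beta\leq a-b$, then $k-b+1\geq 0$ and $k-a+\beta\leq 0$, so the middle binomial drops out and
\begin{align*}
H_{A_\beta}(k)=(2k+1)-(k-b+1)-(k-b+\beta)=2b-\beta,
\end{align*}
uniformly in the parity of $D$ (the borderline $\beta=a-b$ occurs only when $D$ is odd, in which case both $k-b+1$ and $k-a+\beta$ vanish, but the arithmetic still yields $2b-\beta$). If instead $\beta\geq a-b+1$, then $k-b+1\leq 0$ while $k-a+\beta\geq 0$, so the first binomial drops out and
\begin{align*}
H_{A_\beta}(k)=(2k+1)-(k-a+\beta)-(k-b+\beta)=a+b-2\beta+1,
\end{align*}
again independently of the parity.

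For the second assertion, under the hypothesis $1\leq\beta\leq a-b-1$, I would compute $H_{A_\beta}(k)-H_{A_\beta}(k-1)$ termwise from the general formula \eqref{Congthu3.3}, using that $\binom{n}{1}-\binom{n-1}{1}$ equals $1$ for $n\geq 1$ and $0$ otherwise. The same parity discussion shows that $k-b$ and $k-b+\beta-1$ are both $\geq 0$ (so the two corresponding brackets contribute $-1$ each), whereas all arguments involving $k-a$ or $k-a-b+\beta$ remain $\leq 0$ (and hence those brackets contribute $0$). The total is $H_{A_\beta}(k)-H_{A_\beta}(k-1)=2-1-1=0$. The main obstacle throughout is the bookkeeping of strict versus non-strict inequalities and the parity constraints at boundary values (e.g.\ ruling out $a=b+\beta$ in the odd case), but each such check reduces to elementary arithmetic.
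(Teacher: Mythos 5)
Your proof is correct and follows essentially the same route as the paper: both evaluate $H_{A_\beta}(k)$ and $H_{A_\beta}(k)-H_{A_\beta}(k-1)$ directly from the resolution-derived formulas \eqref{Congthu3.3} and \eqref{Congthuchilbertfunction} by deciding which binomials $\binom{n}{1}$ vanish, with casework on the parity of $a+b-\beta$ and on the sign of $a-b-\beta$. Your observation that $k-a+\beta$ equals $-(k-b+1)$ (resp.\ $-(k-b+1)-1$) collapses the paper's longer enumeration of subcases $a=b,\,b+1,\,b+2,\,\geq b+3$ into two clean branches, but the underlying argument is the same.
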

\begin{proof}
Firstly, we consider the case where $a+b-\beta$ is even. Hence $k=\frac{a+b-\beta}{2}-1.$ It follows from \eqref{Congthuchilbertfunction} that 
\begin{align*} 
H_{A_\beta} (k)=a+b-\beta-1-\binom{\frac{a-b -\beta}{2}}{1}-\binom{\frac{a-b+\beta}{2}-1}{1}-\binom{\frac{b-a+\beta}{2}-1}{1}.
\end{align*}
Since $a\geq b\geq \beta +1\geq 2$. We consider the following cases.

\noindent{\bf \underline{Case 1: $a=b$}.} In this case,  $\beta$ has to be even and it is  easy to show  that 
$$H_{A_\beta} (k)=a+b-2\beta+1.$$

\noindent{\bf \underline{Case 2: $a=b+1$}.} In this case,  $\beta$  must be odd.  Therefore
\begin{align*} 
H_{A_\beta} (k)& = a+b-\beta-1-\binom{\frac{\beta+1}{2}-1}{1}-\binom{\frac{\beta-1}{2}-1}{1}\\
&=\begin{cases}
2b-\beta &\text{if}\quad \beta =1\\
a+b-2\beta +1 &\text{if}\quad \beta \geq 3.
\end{cases}
\end{align*}
\noindent{\bf \underline{Case 3: $a=b+2$}.} In this case,  $\beta$  must be even, hence $\beta\geq 2$.  It follows that
\begin{align*} 
H_{A_\beta} (k)& = a+b-\beta-1-\binom{\frac{\beta+2}{2}-1}{1}-\binom{\frac{\beta-2}{2}-1}{1}\\
&=\begin{cases}
2b-\beta &\text{if}\quad \beta =2\\
a+b-2\beta +1 &\text{if}\quad \beta \geq 4.
\end{cases}
\end{align*}

\noindent{\bf \underline{Case 4: $a\geq b+3$}.} Then $a-b+\beta \geq 4.$ Therefore, if $\beta\geq a-b+4$, then
\begin{align*}
H_{A_\beta} (k)&=a+b-\beta-1 -\frac{a-b+\beta}{2} +1- \frac{b-a+\beta}{2}+1 \\
&=a+b-2\beta+1.
\end{align*}
If $\beta \leq a-b-2$, then 
\begin{align*}
H_{A_\beta} (k)&=a+b-\beta-1 -\frac{a-b -\beta}{2}-\frac{a-b+\beta}{2}+1\\
&=2b-\beta. 
\end{align*}
Thus we only consider the case $\beta=a-b+i,\; -1\leq i\leq 3.$ But $a+b-\beta$ is even, therefore both $\beta$ and $a-b$ are either even or odd. It follows that we only consider the two cases where  $\beta =a-b+2$ or $\beta= a-b$. If $\beta =a-b+2$, then a straightforward computation shows that 
\begin{align*}
H_{A_\beta} (k)=a+b-2\beta+1.
\end{align*}

Similarly, if $\beta= a-b$ then 
$$H_{A_\beta} (k) =3b-a=2b-\beta.$$ 

Thus we conclude that
\begin{align*}
H_{A_\beta} (k)&=\begin{cases} 
2b-\beta&\text{if}\quad \beta \leq a-b\\
a+b-2\beta+1&\text{if}\quad \beta\geq a-b+2
\end{cases}
\end{align*}
as desired.

Secondly, we consider the case where $a+b-\beta$ is odd. Hence $k=\frac{a+b-\beta-1}{2}.$ It follows from \eqref{Congthuchilbertfunction} that 
\begin{align*} 
H_{A_\beta} (k)=a+b-\beta-\binom{\frac{a-b -\beta+1}{2}}{1}-\binom{\frac{a-b+\beta-1}{2}}{1}-\binom{\frac{b-a+\beta-1}{2}}{1}.
\end{align*}
Since $a\geq b\geq \beta +1\geq 2$. We consider the following cases where $a=b,\ a=b+1$ or $a\geq b+2.$ The proof is similar as above (even more simple).

Finally, if $1\leq \beta \leq a-b-1$, then 
\begin{align*}
H_{A_\beta} (k) =2b-\beta.
\end{align*}
Notice that $k-a+\beta< 0$ since $a-b\geq \beta+1$.  It follows from \eqref{Congthu3.3} that 
\begin{align*}
H_{A_\beta} (k-1)& = 2k-1 -\binom{k-b}{1}-\binom{k-b+\beta-1}{1}.
\end{align*}
If  $a+b+\beta$ is odd, then 
\begin{align*}
H_{A_\beta} (k-1)&=a+b-\beta-2-\binom{\frac{a-b -\beta-1}{2}}{1}-\binom{\frac{a-b+\beta-1}{2}-1}{1}\\
&= \begin{cases}
2b-\beta&\text{if}\quad a-b=\beta+1\\
2b-\beta&\text{if}\quad a-b=\beta+3\\
2b-\beta&\text{if}\quad a-b\geq \beta+5
\end{cases}\\
&=2b-\beta.                           
\end{align*}
If  $a+b+\beta$ is even, then 
\begin{align*}
H_{A_\beta} (k-1)&=a+b-\beta-3-\binom{\frac{a-b -\beta}{2}-1}{1}-\binom{\frac{a-b+\beta}{2}-2}{1}\\
&= \begin{cases}
2b-\beta&\text{if}\quad a-b=\beta+2\\
2b-\beta&\text{if}\quad a-b\geq \beta+4
\end{cases}\\
&=2b-\beta.                           
\end{align*}
Thus the lemma is completely proved.
\end{proof}

\begin{Lem}\label{LemmahilbertfunctionforG}
Set $G= R/(x^a, y^b-x^{b-1} z, z^2)$ and $k=\lfloor\frac{a+b-1}{2}\rfloor$. If $a\geq b,$ then
\begin{align*}
H_{G} (k)=\begin{cases}
2b&\text{if}\quad a\geq b+1\\
2b-1&\text{if}\quad a=b.
\end{cases}
\end{align*}
Furthermore, if $a\geq b+3$, then 
\begin{align*}
H_{G} (k) =H_{G} (k-1).
\end{align*}
\end{Lem}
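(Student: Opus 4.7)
The plan is to exploit the fact that $G$ is a complete intersection. The elements $x^a$, $y^b - x^{b-1}z$, $z^2$ form a regular sequence in $R = K[x,y,z]$ (their leading terms $x^a, y^b, z^2$ obviously do), so the Hilbert series of $G$ depends only on the degrees $a, b, 2$, and equals
$$H_G(t) = \frac{(1-t^a)(1-t^b)(1-t^2)}{(1-t)^3}.$$
In particular $H_G(j) = H_{R/(x^a,y^b,z^2)}(j)$ for every $j$, and I would switch to counting monomials in the monomial complete intersection, just as the preceding Lemma~\ref{lemmakey} extracted its values from the free resolution of $A_\beta$.

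A $K$-basis of $R/(x^a, y^b, z^2)$ consists of the monomials $x^i y^j z^\ell$ with $0 \leq i \leq a-1$, $0 \leq j \leq b-1$, $\ell \in \{0, 1\}$, so $H_G(k)$ is the number of such triples with $i + j + \ell = k$. Before counting I would record two inequalities that make the case analysis clean: since $a \geq b$, one has $k = \lfloor (a+b-1)/2 \rfloor \leq a - 1$ (so the upper bound $i \leq a - 1$ is never binding in degree $k$), and $k \geq b - 1$ with equality precisely when $a = b$. Splitting the count according to $\ell$, the number of pairs $(i,j)$ summing to $k$ under the box constraints is $b$ (the full width), while the number summing to $k - 1$ is $b$ when $k \geq b$ and only $b - 1$ when $k = b - 1$. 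This yields $H_G(k) = 2b$ if $a \geq b + 1$ and $H_G(k) = 2b - 1$ if $a = b$, as claimed.

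For the second statement, the same counting applied in degree $k - 1$ works because the assumption $a \geq b + 3$ forces $k \geq b + 1$, hence $k - 2 \geq b - 1$. Both the count for $\ell = 0$ (pairs with $i + j = k - 1$) and that for $\ell = 1$ (pairs with $i + j = k - 2$) then equal $b$, so $H_G(k - 1) = 2b = H_G(k)$. I do not expect a serious obstacle: the argument is pure bookkeeping once the complete intersection reduction is made. The only minor point requiring care is the parity of $a + b - 1$ entering $k = \lfloor (a+b-1)/2 \rfloor$, but both bounds $k \leq a - 1$ and $k \geq b - 1$ hold in either parity, so parity plays no role in the conclusion.
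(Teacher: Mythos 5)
Your proof is correct. You and the paper start from the same underlying fact --- $G$ is a complete intersection, so its Hilbert function is the one determined by the Koszul resolution on the degrees $a$, $b$, $2$ --- but the execution is genuinely different. The paper evaluates the resulting alternating sum of binomial coefficients, arriving at $H_G(k)=2k+1-\binom{k-b+1}{1}-\binom{k-b}{1}$, and then splits into cases according to the parity of $a+b$ and the size of $a-b$ (namely $a=b$, $a-b=1$ or $2$, $a-b\geq 3$, and similarly for $H_G(k-1)$). You instead pass to the monomial complete intersection $R/(x^a,y^b,z^2)$, which has the same Hilbert function, and count the monomials $x^iy^jz^\ell$ of degree $k$ directly; the two inequalities $k\leq a-1$ and $k\geq b-1$ (with equality exactly when $a=b$) reduce everything to counting a full row of $b$ lattice points twice, except in the single boundary case $a=b$. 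This eliminates the parity analysis entirely and makes the ``furthermore'' statement an immediate consequence of $k-2\geq b-1$ when $a\geq b+3$. What your approach buys is brevity and robustness (no case explosion); what the paper's buys is uniformity with the neighbouring lemmas, which are forced to work from free resolutions because the algebras there are not complete intersections. The only point you state loosely is the regular-sequence claim via ``leading terms'': it is correct (choose a term order with $y>x$, so the initial ideal contains the $\mathfrak{m}$-primary ideal $(x^a,y^b,z^2)$), or more simply note that the radical of $(x^a,\,y^b-x^{b-1}z,\,z^2)$ contains $x$, $z$, and hence $y$, so the ideal is $\mathfrak{m}$-primary and three homogeneous generators in three variables must form a regular sequence.
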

\begin{proof}
Since $G$ is resolved by the Koszul complex and $k-a<0$, we have
\begin{align*}
H_G(k)&=\binom{k+2}{2}-\binom{k}{2}-\binom{k-b+2}{2}-\binom{k-b}{2}\\
&=2k+1-\binom{k-b+1}{1}-\binom{k-b}{1}.
\end{align*}
If $a+b$ is odd, then $k=\frac{a+b-1}{2}.$ A simple computation shows that
\begin{align*}
H_G(k)&=\begin{cases}
a+b-\frac{a-b-1}{2}-1-\frac{a-b-1}{2}&\text{if}\quad a-b\geq 3\\
a+b-1&\text{if}\quad a-b=1
\end{cases}\\
&=2b.
\end{align*}
If $a+b$ is even, then $k=\frac{a+b}{2}-1.$ It follows that
\begin{align*}
H_G(k)&=\begin{cases}
a+b-1-\frac{a-b}{2}-\frac{a-b}{2}+1&\text{if}\quad a-b\geq 4\\
a+b-1-1&\text{if}\quad a-b= 2\\
a+b-1&\text{if}\quad a=b
\end{cases}\\
&=\begin{cases}
2b&\text{if}\quad a-b\geq 2\\
2b-1&\text{if}\quad a=b.
\end{cases}
\end{align*}
Analogously we can check that
\begin{align*}
H_G(k-1)&=2k-1-\binom{k-b}{1}-\binom{k-b-1}{1}\\
&=\begin{cases}
2b&\text{if}\quad a-b\geq 3\\
2b-1&\text{if}\quad 1\leq a-b\leq 2\\
2b-3&\text{if}\quad  a=b.
\end{cases}
\end{align*}
Thus, if $a-b\geq 3$, then $H_{G} (k) =H_{G} (k-1).$
\end{proof}

\begin{pro} \label{Propositionforbetalarge}
Assume  $b\leq a\leq 2b-3$. Then the ideal
$$I_\beta=(x^a, y^b-x^{b-1} z, z^2, x^{a-b+1}y^{b-\beta}, y^{b-\beta}z)\subset R$$
has the WLP, whenever $a-b+2\leq \beta \leq b-1$.
\end{pro}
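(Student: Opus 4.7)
The plan is to apply Proposition~\ref{Proposition2.5} with the linear form $L=x-y+z$ and verify directly that the multiplication map $\times L\colon [A_\beta]_k\to[A_\beta]_{k+1}$ is surjective, where $A_\beta=R/I_\beta$, $D=a+b-\beta-1$ and $k=\lfloor D/2\rfloor$. Since $A_\beta$ is Gorenstein, this suffices when $D$ is even, and by the symmetry of the Hilbert function (giving $\dim_K[A_\beta]_k=\dim_K[A_\beta]_{k+1}$ when $D=2k+1$) surjectivity upgrades automatically to an isomorphism when $D$ is odd. Consequently it will be enough to prove
\[
[R/(I_\beta,L)]_{k+1}=0.
\]

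To compute this quotient, I substitute $y=x+z$, reducing $R/(I_\beta,L)$ to $K[x,z]/J$ where $J$ is generated by the images of the five generators of $I_\beta$. Using $z^2=0$ and the binomial theorem, $(x+z)^b\equiv x^b+bx^{b-1}z$ and $(x+z)^{b-\beta}\equiv x^{b-\beta}+(b-\beta)x^{b-\beta-1}z$ modulo $z^2$. The generators of $I_\beta$ accordingly map to $x^a$, $x^b+(b-1)x^{b-1}z$, $z^2$, $x^{a-\beta+1}+(b-\beta)x^{a-\beta}z$ and $x^{b-\beta}z$. The key simplification uses $a\geq b$ and $\beta\geq 1$: since $x^{b-\beta}z\in J$, we have $x^{b-1}z=x^{\beta-1}\cdot x^{b-\beta}z\in J$ and $x^{a-\beta}z=x^{a-b}\cdot x^{b-\beta}z\in J$, which absorbs the mixed $z$-terms in the second and fourth generators and leaves $x^b\in J$ and $x^{a-\beta+1}\in J$. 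The hypothesis $\beta\geq a-b+2$ gives $a-\beta+1\leq b-1$, so $x^b\in(x^{a-\beta+1})$ is redundant. Therefore
\[
J=(x^{a-\beta+1},\,x^{b-\beta}z,\,z^2),
\]
whose quotient $K[x,z]/J$ has monomial basis $\{x^i\mid 0\le i\le a-\beta\}\cup\{x^jz\mid 0\le j\le b-\beta-1\}$ and therefore vanishes in every degree greater than $a-\beta$ (note $b-\beta\leq a-\beta$).

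The final step is a short arithmetic verification: I need $k+1>a-\beta$, equivalently $\lfloor(a+b-\beta-1)/2\rfloor\geq a-\beta$. A case split on the parity of $D=a+b-\beta-1$ reduces this to $\beta\geq a-b+1$ when $D$ is even and to $\beta\geq a-b+2$ when $D$ is odd, both of which are implied by $\beta\geq a-b+2$. The main obstacle is simply the careful bookkeeping that all five images of the generators of $I_\beta$ collapse as claimed; once the clean form $J=(x^{a-\beta+1},x^{b-\beta}z,z^2)$ is in hand, the vanishing $[R/(I_\beta,L)]_{k+1}=0$ is a one-line monomial count and Proposition~\ref{Proposition2.5} finishes the argument.
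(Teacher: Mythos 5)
Your argument is correct, but it takes a genuinely different route from the paper. The paper first uses the tower $A_\beta = A_{\beta-1}/(0:_{A_{\beta-1}}y)$ together with Proposition~\ref{Proposition2.7}(i) to reduce to the case of odd socle degree, and then invokes the Hessian criterion (Proposition~\ref{Proposition2.8_Hessian}): it writes down the Macaulay dual generator $F=x^{a-1}y^{b-\beta-1}z+x^{a-b}y^{2b-\beta-1}$, chooses an explicit monomial basis of $[A_\beta]_k$ (which is where the hypothesis $a\le 2b-3$, forcing $k<b$, and the Hilbert function computation of Lemma~\ref{lemmakey} enter), and shows the Hessian is anti-triangular with nonzero anti-diagonal. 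You instead exhibit the specific Lefschetz element $L=x-y+z$ and compute the cokernel of $\times L$ directly: after the substitution $y=x+z$ the ideal $(I_\beta,L)$ collapses, modulo $z^2$ and the relation $x^{b-\beta}z\in J$, to the monomial ideal $(x^{a-\beta+1},x^{b-\beta}z,z^2)$ of $K[x,z]$, whose quotient visibly vanishes in degree $k+1$ precisely because $\beta\ge a-b+2$; I checked the reduction of each of the five generators and the final parity bookkeeping, and they are all correct (in particular $x^{b-1}z=x^{\beta-1}\cdot x^{b-\beta}z$ and $x^{a-\beta}z=x^{a-b}\cdot x^{b-\beta}z$ do lie in $J$, and surjectivity upgrades to bijectivity in the odd case by symmetry of the $h$-vector). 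Your approach is the same elementary technique the paper uses for the $b=2$ case in Proposition~\ref{Propositionb=2}, and here it buys a noticeably shorter proof: it bypasses the Hessian machinery, the dual generator, and Lemma~\ref{lemmakey} entirely, and it treats all $\beta$ in the stated range uniformly rather than descending through the quotients $A_{\beta-1}/(0:y)$. The paper's Hessian computation, on the other hand, yields the extra structural information about the middle-degree pairing and fits the template used elsewhere in the section.
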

\begin{proof}
Set $A_\beta =R/I_\beta$. By Proposition~\ref{Proposition3.1}, one has
$$ \quad A_\beta =\frac{A_{\beta -1}}{(0\colon_{A_{\beta-1}} y)},$$
for all $2\leq \beta\leq b-1$.
Notice that the socle degree of $A_\beta$ is $D=a+b-\beta-1.$ Hence if $\beta=a-b+2$, then $ D=2b-3$ is odd. To prove that $A_\beta$ has the WLP for every $a-b+2\leq\beta \leq b-1$, by Proposition~\ref{Proposition2.7}(i), it is enough to prove that $A_\beta$ has the WLP whenever $D$ is odd.

Now let $\beta$ be an integer such that $a-b+2\leq\beta \leq b-1$ and $a+b-\beta$ is even. In this case, one has $k =\frac{a+b-\beta}{2}-1$. It follows from Lemma~\ref{lemmakey} that
\begin{align*}
H_{A_\beta} (k)= a+b-2\beta+1.
\end{align*}
Clearly,  since $\beta \geq a-b+2$,  $k<b\leq a$. Therefore, we can take  a $K$-linear basis $\mathcal{B} =\mathcal{B}_1\sqcup \mathcal{B}_2\sqcup \mathcal{B}_3$ of $[A_\beta]_{k}$ with
\begin{align*}
\mathcal{B}_1 &= \{ u_i =x^{k+1-i} y^{i-1} \mid i=1,2,\ldots,b-\beta \}\\
\mathcal{B}_2 &= \{ v_i =x^{i-1} y^{k+1-i} \mid i=1,2,\ldots,a-b+1 \}\\
\mathcal{B}_3 &= \{ w_{i} =x^{k-i} y^{i-1} z\mid i=1,2,\ldots, b-\beta  \}.
\end{align*}
On the other hand, the Macaulay dual generator of $A_\beta$ is
$$F=x^{a-1}y^{b-\beta -1}z +x^{a-b}y^{2b-\beta -1}.$$
To prove the proposition, by Proposition~\ref{Proposition2.8_Hessian}, it is enough to show that $\Hess^{k}_\mathcal{B}(F)$
has nonzero determinant. Write 
$$\Hess^{k}_\mathcal{B}(F)=\bmt{ A &\vdots & B&\vdots & C\\\cdots & \cdots &\cdots&\cdots&\cdots\\ B^t & \vdots & U& \vdots & V\\\cdots & \cdots &\cdots&\cdots&\cdots\\ C^t & \vdots & V^t& \vdots & W},$$
where $A=\pmt{(u_i\cdot u_j)\circ F}, B=\pmt{(u_i\cdot v_j)\circ F}, C=\pmt{(u_i\cdot w_j)\circ F}, U=\pmt{(v_i\cdot v_j)\circ F},$
$V=\pmt{(v_i\cdot w_j)\circ F} $ and $W=\pmt{(w_i\cdot w_j)\circ F}$.

Notice that $A,C,U$ and $W$ are the square matrices. It follows that the diagonal of $\Hess^{k}_\mathcal{B}(F)$ from the top right to the bottom left corner is equal to the diagonals of $C,U$ and $C^t.$ We will show that the entries on this diagonal are nonzero and the entries under this line are zero.

Indeed, a straightforward computation shows that the matrix $U=(u_{i,j})$ is a square matrix of size $a-b+1$ with
\begin{align*}
u_{i,j} =(v_i\cdot v_j)\circ F &=(x^{i+j-2}y^{2k+2-i-j})\circ F \\
&=\begin{cases}
y &\text{if}\quad i+j=a-b+2\\
0 &\text{if}\quad i+j\geq a-b+3
\end{cases}
\end{align*}
since $i+j-2\leq 2a-2b\leq 2a-(a+3)=a-3.$ 
Similarly, the matrix $C=(c_{i,j})$ is a square matrix of size $b-\beta$ with
\begin{align*}
c_{i,j} =(u_i\cdot w_j)\circ F &=\begin{cases}
(x^{2k-b+\beta}y^{b-\beta-1}z)\circ F &\text{if}\quad i+j=b-\beta+1\\
(x^{2k+1-i-j}y^{i+j-2}z)\circ F &\text{if}\quad i+j\geq b-\beta+2
\end{cases}\\
&=\begin{cases}
x &\text{if}\quad i+j=b-\beta+1\\
0 &\text{if}\quad i+j\geq b-\beta+2.
\end{cases}
\end{align*}

It is easy to see that $W=\pmt{(w_i\cdot w_j)\circ F}=0$ because $w_i\cdot w_j$ contains $z^2.$ Finally,  $V=(v_{i,j})$ is a matrix of size $(a-b+1)\times (b-\beta)$ with
\begin{align*}
v_{i,j}=(v_i\cdot w_j)\circ F &=(x^{i-1} y^{k+1-i}\cdot x^{k-j} y^{j-1} z)\circ F\\
&=(x^{k+i-j-1}y^{k-i+j}z)\circ F.
\end{align*}
Notice that $ k>b-\beta -1$. Hence if $i\leq j$, then $k-i+j>b-\beta -1.$  Thus $(v_i\cdot w_j)\circ F=0.$ If $i>j,$ then put $\ell:=i-j$, hence $1\leq \ell \leq a-b\leq \beta -2.$ In this case, we will see that $k-i+j> b-\beta -1.$ Indeed, one has
\begin{align*}
k-i+j> b-\beta -1 \Leftrightarrow 2k-2\ell>2b-2\beta -2,
\end{align*}
where the last inequality follows from the fact that
$$2k-2\ell \geq a+b-\beta -2  -(a-b)-(\beta-2) \geq 2b-2\beta.$$
Thus, we see that  $V=0.$

We thus conclude that the Hessian of $F$ is
\begin{align*}
\Hess^{k}_\mathcal{B}(F)=
\bmt{\ast&\cdots &\ast&\ast& \cdots&\ast&\ast&\cdots&x\\
	\vdots&\cdots&\vdots&\vdots&\cdots&\vdots&\vdots&\cdots&\vdots\\
	\ast&\cdots &\ast&\ast&\cdots&\ast & x&\cdots& 0\\ 
	\ast&\cdots &\ast&\ast&\cdots&y&0&\cdots&0\\
	\vdots&\cdots&\vdots&\vdots&\cdots&\vdots&\vdots&\cdots&\vdots\\
	\ast&\cdots &\ast&y&\cdots&0&0&\cdots&0\\
	\ast&\cdots &x&0&\cdots&0&0&\cdots&0\\
	\vdots&\cdots&\vdots&\vdots&\cdots&\vdots&\vdots&\cdots&\vdots\\
	x&\cdots&0&0&\cdots&0&0&\cdots&0}
\end{align*}
 which has nonzero determinant.
\end{proof}

\begin{pro}\label{Proposition3.6}
Assume  $a\geq b\geq 2$. Then the ideal
$$I_\beta=(x^a, y^b-x^{b-1} z, z^2, x^{a-b+1}y^{b-\beta}, y^{b-\beta}z)\subset R$$
has the WLP, whenever $1\leq \beta \leq \min\{a-b+1,b-1\}$.
\end{pro}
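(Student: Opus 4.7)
The plan is to prove the WLP directly for $A_\beta$ when $D_\beta = a+b-\beta-1$ is odd, via a Hessian computation modeled on Proposition~\ref{Propositionforbetalarge}, and then to deduce the even-socle-degree cases from the odd ones through Proposition~\ref{Proposition2.7}(i). Since $A_\beta = A_{\beta-1}/(0:_{A_{\beta-1}} y)$ and $D_{\beta-1} = D_\beta + 1$, whenever $D_\beta$ is even the predecessor $D_{\beta-1}$ is odd, and Proposition~\ref{Proposition2.7}(i) will lift the WLP from $A_{\beta-1}$ to $A_\beta$; here $A_{\beta-1}$ is either $A_0 = R/(x^a, y^b - x^{b-1}z, z^2)$ (a codimension-three complete intersection in characteristic zero, which has the WLP by \cite{HMNW2003}) when $\beta = 1$, or an algebra of odd socle degree already covered by the Hessian argument when $\beta \geq 2$. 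I note that in the odd case the parity condition $a+b-\beta$ even combined with $\beta \leq \min\{a-b+1, b-1\}$ forces $\beta \leq a-b$ and hence $a > b$, because $\beta = a-b+1$ would give $a+b-\beta = 2b-1$ odd.

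Assuming $D_\beta$ is odd, I would set $k = (a+b-\beta)/2 - 1$, observe $b-1 \leq k \leq a-1$, and take as $K$-basis of $[A_\beta]_k$
\[
\mathcal{B} = \{u_i = x^{k-i} y^i : 0 \leq i \leq b-1\} \sqcup \{w_i = x^{k-1-i} y^i z : 0 \leq i \leq b-\beta-1\},
\]
whose cardinality $2b-\beta$ matches the value $H_{A_\beta}(k)$ from Lemma~\ref{lemmakey}. By Proposition~\ref{Proposition2.8_Hessian}(i), the WLP will follow once I show that the Hessian $\Hess^k_{\mathcal{B}}(F)$ is nonsingular, where $F = x^{a-1} y^{b-\beta-1} z + x^{a-b} y^{2b-\beta-1}$ is the Macaulay dual generator from Proposition~\ref{Proposition3.1}(ii).

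Because $z^2 \in I_\beta$ annihilates every $w_i w_j$, the Hessian takes the block form $\bmt{A & B \\ B^t & 0}$. Using $2k = a+b-\beta-2$, a direct calculation of the action of $u_i u_j$ and $u_i w_j$ on the two terms of $F$ should show that $A$ is supported on the three anti-diagonals $i+j \in \{b-\beta-1,\ 2b-\beta-2,\ 2b-\beta-1\}$ with respective entries $z,\, y,\, x$, while $B$ is supported on the two anti-diagonals $i+j \in \{b-\beta-2,\ b-\beta-1\}$ with entries $y,\, x$. Ordering $\mathcal{B}$ as $\mathcal{B}_A$ followed by $\mathcal{B}_B$, the main anti-diagonal of the full $(2b-\beta)\times(2b-\beta)$ matrix (row and column indices summing to $2b-\beta-1$) should be filled precisely by $x$'s---$\beta$ positions from $A$ at $i+j = 2b-\beta-1$, and $b-\beta$ positions from each of $B$ and $B^t$ at $i+j = b-\beta-1$---and all remaining nonzero entries should lie strictly above it. The Hessian will then be anti-upper-triangular with determinant $\pm x^{2b-\beta} \neq 0$, completing the proof.

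The hard part will be the bookkeeping in the last step: verifying that each of the $2b-\beta$ positions along the main anti-diagonal is attained by exactly one block $A$, $B$, or $B^t$ with value $x$, and that none of the three supported anti-diagonals of $A$ nor the two of $B$ can cross strictly below the main anti-diagonal of the full Hessian. Both checks hinge on the constraint $\beta \leq a-b$, which secures $k \geq b-1$ and also prevents the ideal exclusion coming from $x^{a-b+1} y^{b-\beta}$ from cutting into the basis $\mathcal{B}_A$, together with the block sizes $|\mathcal{B}_A| = b$ and $|\mathcal{B}_B| = b-\beta$.
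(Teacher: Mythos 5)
Your proposal is correct, but it reorganizes the proof in a genuinely different way from the paper. The paper descends the entire chain $A_1\to A_2\to\cdots\to A_\beta$ via Proposition~\ref{Proposition2.7}, which forces it to invoke part (ii) at every even socle degree and hence to establish the Hilbert-function equalities $H(k)=H(k-1)$ of Lemma~\ref{lemmakey} and Lemma~\ref{LemmahilbertfunctionforG}; it then settles the base case $A_1$ by three subcases ($a+b$ even via the complete intersection $G$ and \cite{HMNW2003}; $a+b$ odd with $a\geq b+3$ via Proposition~\ref{Proposition2.7}(ii); and the single boundary case $a=b+1$ by an explicit Hessian). You instead run the Hessian argument uniformly over \emph{all} $\beta$ with odd socle degree and descend only one step, so you never need Proposition~\ref{Proposition2.7}(ii) and can dispense with Lemma~\ref{LemmahilbertfunctionforG} and the ``furthermore'' part of Lemma~\ref{lemmakey}. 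Your computation does check out: parity forces $\beta\leq a-b$, whence $k=\tfrac{a+b-\beta}{2}-1$ satisfies $b-1\leq k\leq a-1$; your $2b-\beta$ monomials span and match $H_{A_\beta}(k)$ (the relation $x^{a-b+1}y^{b-\beta}$ cannot kill any $u_i$, since $i\geq b-\beta$ and $k-i\geq a-b+1$ together force $a\leq b+\beta-4$, contradicting $a\geq b+\beta$); the blocks are supported exactly on the anti-diagonals you list with entries $z,y,x$ and $y,x$; and the main anti-diagonal is covered by $\beta$ entries from $A$ and $b-\beta$ from each of $B$ and $B^{t}$, all equal to $x$, with nothing below, so $\det\Hess^{k}_{\mathcal{B}}(F)=\pm x^{2b-\beta}\neq 0$. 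The trade-off is clear: the paper minimizes explicit computation at the cost of more case analysis and two auxiliary Hilbert-function lemmas, while your route is one uniform Hessian computation (structurally the twin of Proposition~\ref{Propositionforbetalarge}, now on the side $\beta\leq a-b$ of the dichotomy) plus a single application of Proposition~\ref{Proposition2.7}(i).
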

\begin{proof}
Set $A_\beta =R/I_\beta$  and $G= R/(x^a, y^b-x^{b-1} z, z^2)$. By Proposition~\ref{Proposition3.1}, one has
$$ A_1 =\frac{G}{(0\colon_{G} y)}\quad \text{and}\quad A_\beta =\frac{A_{\beta -1}}{(0\colon_{A_{\beta-1}} y)},$$
for all $2\leq \beta\leq b-1$. Notice that if $\beta=a-b\leq b-1$, then the socle degree of $A_{\beta}$ is odd. By Proposition~\ref{Proposition2.7} and Lemma~\ref{lemmakey}, it is enough to show that $A_1$ has the WLP. We consider the following cases:

\noindent{\bf \underline{Case 1: $a+b$ is even.}} Then the socle degree of $G$ is $a+b-1$ which is odd. Since $G$ is an Artinian complete intersection algebra of codimension 3, by \cite[ Corollary 2.4]{HMNW2003}, $G$ has the WLP. It follows that $A_1$ has the WLP by  Proposition~\ref{Proposition2.7}(i).

\noindent{\bf \underline{Case 2: $a+b$ is odd.}}  If $a\geq b+3$, then $A_1$ has the WLP by Proposition~\ref{Proposition2.7}(ii) and Lemma~\ref{LemmahilbertfunctionforG}. It follows that the remain case is where $a=b+1$. More precisely, we have to  show that the ideal
$$I=(x^a, y^{a-1}-x^{a-2} z, z^2, x^{2}y^{a-2}, y^{a-2}z)$$
has the WLP. The socle degree of $R/I$ is $2a-3$, hence $k=a-2.$ By Lemma~\ref{lemmakey}, one has
$$H_{R/I} (a-2)=2a-3.$$
Therefore, we can see that a $K$-linear basis of $[R/I]_{a-2}$ is $\mathcal{B} =\mathcal{B}_1\sqcup \mathcal{B}_2,$
where
\begin{align*}
\mathcal{B}_1 &= \{ u_i =x^{a-1-i} y^{i-1} \mid i=1,2,\ldots,a-1 \}\\
\mathcal{B}_2 &= \{ u_{a+i} =x^{a-3-i} y^{i}z \mid i=0,1,\ldots,a-3 \}.
\end{align*}
On the other hand, the Macaulay dual generator of $R/I$ is
$$F=x^{a-1}y^{a-3}z +xy^{2a-4}.$$
To prove that $R/I$ has the WLP, by Proposition~\ref{Proposition2.8_Hessian}, it is enough to show that $\Hess^{a-2}_\mathcal{B}(F)$
has nonzero determinant. Write 
$$\Hess^{a-2}_\mathcal{B}(F) =\bmt{M_{i,j}},$$
where $M_{i,j}=\pmt{(u_i\cdot u_j)\circ F}$ for all $1\leq i,j\leq 2a-3$.

Notice that $M$ is a square matrix of size $2a-3$. First, we have
\begin{align*}
a_{i,2a-2-i}&=(u_i\cdot u_{2a-2-i})\circ F \\
& =\begin{cases}
(x^{a-2}y^{a-3}z)\circ F &\text{if}\quad 1\leq i\leq a-2\\
(y^{2a-4})\circ F &\text{if}\quad i=a-1\\
(x^{a-2}y^{a-3}z)\circ F&\text{if}\quad a\leq i\leq 2a-3\\
\end{cases}\\
&=x.
\end{align*}
Now we assume that $i+j\geq 2a-1$.  If $1\leq i\leq a-1$ then  $j\geq a$, hence
\begin{align*}
a_{i,j}&=(u_i\cdot u_{j})\circ F \\
&=(x^{3a-i-j-4}y^{i+j-a-1}z)\circ F \\
&=0
\end{align*}
since $i+j-a-1\geq a-2.$ By the symmetry of $M_{i,j}$, we only consider the case where $i,j\geq a$. In this case, we have $a_{i,j}=0$ because $u_i\cdot u_j$ contains $z^2.$ 

In summary, the Hessian of $F$ is
\begin{align*}
\Hess^{k}_\mathcal{B}(F)=
\bmt{\ast&\ast &\cdots &\ast&x\\
	\ast&\ast &\cdots &x&0\\
	\vdots&\vdots &\cdots&\vdots&\vdots\\
	\ast&x &\cdots&0&0\\
	x&0&\cdots&0&0}
\end{align*}
which has nonzero determinant.
\end{proof}

We now state our first main result.
\begin{Theo}\label{Theorem3.6}
Consider the ideal $I$ as in \eqref{IdealGorenstein}. If one of the integers $a,b$ and $c$ is equal to 2, then $R/I$ has the WLP.	
\end{Theo}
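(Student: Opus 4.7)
The plan is to assemble the three propositions already established in this subsection, using the $x\leftrightarrow z$ symmetry of the defining ideal to reduce the case $a=2$ to the case $c=2$. The hypothesis that one of $a,b,c$ equals $2$ splits into exhaustive subcases which I would match one-by-one to previously proved results.

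The first step is to dispose of the case $b=2$ by a direct invocation of Proposition~\ref{Propositionb=2}, where the explicit Lefschetz element $L=x-y+z$ was already constructed and shown to induce surjective multiplication in degree $k+1$.

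For the remaining cases I would assume $b\geq 3$ together with $a=2$ or $c=2$. Swapping $x$ and $z$ interchanges the triples $(a,\alpha)$ and $(c,\gamma)$ in the ideal \eqref{IdealGorenstein}, so under the standing convention $a\geq c$ I may reduce to $c=2$. Then $\gamma=c-1=1$, and the identity $\alpha+\gamma=b$ forces $\alpha=b-1$; the constraint $\alpha\leq a-1$ yields $a\geq b$. At this point the ideal becomes $I_\beta$ for some $1\leq\beta\leq b-1$, with $a\geq b\geq 3$.

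The final step is a careful accounting of the $\beta$-range. Proposition~\ref{Proposition3.6} handles $1\leq\beta\leq\min\{a-b+1,b-1\}$ under the mere hypothesis $a\geq b\geq 2$, whereas Proposition~\ref{Propositionforbetalarge} handles $a-b+2\leq\beta\leq b-1$ under the extra hypothesis $b\leq a\leq 2b-3$. The key observation is that any $\beta$ left uncovered by Proposition~\ref{Proposition3.6} must satisfy $\beta\geq a-b+2$, and combined with $\beta\leq b-1$ this forces $a\leq 2b-3$, which is precisely the hypothesis of Proposition~\ref{Propositionforbetalarge}. Conversely, if $a\geq 2b-2$ then $\min\{a-b+1,b-1\}=b-1$, so Proposition~\ref{Proposition3.6} alone already covers every admissible $\beta$. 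Hence the two ranges together exhaust $1\leq\beta\leq b-1$ for every admissible $a$.

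Since the heavy lifting (the Hessian determinant computations for odd socle degree, the Koszul Hilbert-function estimates of Lemmas~\ref{lemmakey} and~\ref{LemmahilbertfunctionforG}, and the appeals to Proposition~\ref{Proposition2.7} to lift from odd to even socle degree) has all been carried out in the earlier propositions, the principal obstacle here is purely a bookkeeping check: one must verify that the $\beta$-ranges and the auxiliary hypotheses of Propositions~\ref{Proposition3.6} and~\ref{Propositionforbetalarge} interlock precisely, with no admissible triple $(a,b,c)$ slipping through the cracks, and that the symmetry argument used to reduce $a=2$ to $c=2$ respects both the form of the ideal and the standing assumption $a\geq c$.
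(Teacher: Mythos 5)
Your proposal is correct and follows essentially the same route as the paper: dispose of $b=2$ via Proposition~\ref{Propositionb=2}, reduce $a=2$ to $c=2$ by the $x\leftrightarrow z$ symmetry under the convention $a\geq c$, and then combine Propositions~\ref{Proposition3.6} and~\ref{Propositionforbetalarge} to cover all admissible $\beta$. Your explicit check that the two $\beta$-ranges interlock (Proposition~\ref{Proposition3.6} alone suffices when $a\geq 2b-2$, and otherwise $a\leq 2b-3$ activates Proposition~\ref{Propositionforbetalarge}) is a welcome addition that the paper leaves implicit.
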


\begin{proof}
The theorem was proved for the case $b=2$ in Proposition~\ref{Propositionb=2}. By the symmetry of $x$ and $z$, we can always assume that $a\geq c.$ Now if  $c=2$, then it follows from Propositions~\ref{Propositionforbetalarge} and~\ref{Proposition3.6} that $R/I$ has the WLP. 
\end{proof}

\subsection{The ideal $I$ contains a cubic.}
In this subsection, we consider the case where the ideal $I$ contains a cubic.

The first case we consider is $b=3.$  Denote
$$I_\beta=(x^a, y^3-x^\alpha z^{3-\alpha}, z^c, x^{a-\alpha}y^{3-\beta}, y^{3-\beta}z^{c+\alpha -3})\subset R,$$
with $a\geq c$ and $ 1\leq \alpha,\beta\leq 2$ such that $c+\alpha\geq 4$. The socle degree of $R/I_\beta$ is $D=a+c-\beta$  and the free resolution of $R/I_\beta$ is
\begin{align}\label{freeresolution_for_b=3}
{\footnotesize  \xymatrix{ 0\ar[r] & R(-a-c-3+\beta) \ar[r] & {\begin{array}{c} R(-a-c+\beta)\\\oplus  \\R(-a-3+\beta)\\\oplus \\R(-c-3+\beta) \\  \oplus \\R(-a-3+\alpha)\\\oplus \\R(-c-\alpha)  \end{array}}\ar[r] &  {\begin{array}{c}R(-3)\\\oplus \\ R(-a)\\\oplus \\R(-c)\\ \oplus\\  R(-a-3+\alpha+\beta) \\\oplus \\R(-c-\alpha+\beta) \end{array}}\ar[r] & R\ar[r] & R/I_\beta\longrightarrow 0 }.}
\end{align}

\begin{Lem}\label{LemmahilbertfunctionforGwith_b=3}
Set $G= R/(x^a, y^3-x^{\alpha} z^{3-\alpha}, z^c)$ and $k=\lfloor\frac{a+c}{2}\rfloor$. If $a\geq c\geq 2,$ then
\begin{align*}
H_{G} (k)=\begin{cases}
3c-2&\text{if}\quad a=c\\
3c-1&\text{if}\quad a=c+1\\
3c&\text{if}\quad a\geq c+2.
\end{cases}
\end{align*}
Furthermore, if $a\geq c+4$, then 
\begin{align*}
H_{G} (k) =H_{G} (k-1).
\end{align*}
\end{Lem}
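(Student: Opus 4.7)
The plan is to imitate the proof of Lemma~\ref{LemmahilbertfunctionforG}, replacing the Koszul data for $(x^{a}, y^{b}-x^{b-1}z, z^{2})$ by that of $(x^{a}, y^{3}-x^{\alpha}z^{3-\alpha}, z^{c})$. First I would note that these three forms are a regular sequence in $R$: since the initial forms (with respect to, say, the lex order $z>y>x$) are $x^{a}, y^{3}, z^{c}$, which already form a regular sequence, the quotient $G$ is Artinian of length $3ac$, so $(x^{a}, y^{3}-x^{\alpha}z^{3-\alpha}, z^{c})$ is a complete intersection of codimension three. Hence $G$ is resolved by the Koszul complex on generators of degrees $a,3,c$, and inclusion--exclusion gives, under the convention $\binom{n}{m}=0$ for $n<m$,
\begin{align*}
H_{G}(j)&=\binom{j+2}{2}-\binom{j-a+2}{2}-\binom{j-1}{2}-\binom{j-c+2}{2}\\
&\quad+\binom{j-a-1}{2}+\binom{j-a-c+2}{2}+\binom{j-c-1}{2}-\binom{j-a-c-1}{2}.
\end{align*}

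Next I would specialize to $j=k=\lfloor(a+c)/2\rfloor$ and check which binomials vanish. Since $a\geq c\geq 2$, one always has $k\leq a$, with equality exactly when $a=c$; moreover $k-a-1<0$ and $k-a-c+2\leq 2-c\leq 0$. Consequently all terms carrying a $-a$ shift vanish, \emph{except} $\binom{k-a+2}{2}$, which equals $1$ in the single case $a=c$ and is $0$ otherwise. So the formula collapses to
$$H_{G}(k)=\binom{k+2}{2}-\binom{k-1}{2}-\binom{k-c+2}{2}+\binom{k-c-1}{2}-\epsilon,$$
where $\epsilon=1$ if $a=c$ and $\epsilon=0$ if $a\geq c+1$.

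The three cases are then short computations. If $a=c$, then $k=a$, $\binom{k-c+2}{2}=1$, $\binom{k-c-1}{2}=0$, and a direct evaluation yields $3a-2=3c-2$. If $a=c+1$, then $k=c$, $\binom{k-c+2}{2}=1$, $\binom{k-c-1}{2}=0$, $\epsilon=0$, and one gets $3c-1$. If $a\geq c+2$, then $k-c\geq 1$, so the identity $\binom{m+2}{2}-\binom{m-1}{2}=3m$ (valid for $m\geq 1$) applied to $m=k$ and to $m=k-c$ gives
$$H_{G}(k)=3k-3(k-c)=3c.$$

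Finally, for the inequality part, if $a\geq c+4$ then $k-1-c\geq 1$, so the same vanishing analysis and the same identity applied at degree $k-1$ produce $H_{G}(k-1)=3(k-1)-3(k-1-c)=3c=H_{G}(k)$. There is no real obstacle here: once the complete-intersection observation has reduced the problem to the Koszul formula, everything is bookkeeping of when the binomials become positive, and the two ``regimes'' are cleanly separated by the three sub-cases $a=c$, $a=c+1$, and $a\geq c+2$.
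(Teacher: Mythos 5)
Your proof is correct and follows essentially the same route as the paper: compute $H_G$ from the Koszul resolution on generators of degrees $a,3,c$ and track which binomial coefficients vanish at $j=k$; your observation that $\binom{m+2}{2}-\binom{m-1}{2}=3m$ applies to $m=k-c$ as well as $m=k$ lets you avoid the paper's separate parity cases, which is a mild streamlining rather than a different method. One small slip in your preliminary justification: with the lex order $z>y>x$ the initial form of $y^3-x^{\alpha}z^{3-\alpha}$ is $x^{\alpha}z^{3-\alpha}$ (since $3-\alpha\geq 1$), and $(x^a,x^{\alpha}z^{3-\alpha},z^c)$ is not Artinian, so you should instead take an order with $y$ largest (e.g.\ lex $y>x>z$) to get initial forms $x^a,y^3,z^c$; the paper simply asserts the Koszul resolution, and this is an easily repaired side remark rather than a gap in the main computation.
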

\begin{proof}
Since $G$ is resolved by the Koszul complex and $k-a\leq 0$, we get
\begin{align*}
H_G(k)&=\binom{k+2}{2}-\binom{k-1}{2}-\binom{k-a+2}{2}-\binom{k-c+2}{2}+\binom{k-c-1}{2}\\
&=3k-\binom{k-a+2}{2}-\binom{k-c+2}{2}+\binom{k-c-1}{2}.
\end{align*}
If $a+c$ is odd, then $k=\frac{a+c-1}{2}.$ It follows that
\begin{align*}
H_G(k)&=\begin{cases}
3c-1&\text{if}\quad a=c+1\\
3c&\text{if}\quad a\geq c+3.
\end{cases}
\end{align*}
If $a+c$ is even, then $k=\frac{a+c}{2}.$ Therefore
\begin{align*}
H_G(k)&=\begin{cases}
3c-2&\text{if}\quad a=c\\
3c&\text{if}\quad a\geq c+ 2.
\end{cases}
\end{align*}
Analogously we can check that
\begin{align*}
H_G(k-1)&=3(k-1)-\binom{k-c+1}{2}+\binom{k-c-2}{2}\\
&=\begin{cases}
3c-3&\text{if}\quad a=c \; \text{or}\; a=c+1\\
3c-1&\text{if}\quad  a=c +2\; \text{or}\; a=c+3\\
3c&\text{if}\quad  a\geq c+4.
\end{cases}
\end{align*}
Thus, if $a\geq c+4$, then $H_{G} (k) =H_{G} (k-1).$
\end{proof}

\begin{Lem}\label{LemmahilbertfunctionforGwith_b=3and_beta=1}
Assume $\beta=1$ and $k=\lfloor\frac{a+c-1}{2}\rfloor$. If $a\geq c,$ then
\begin{align*}
H_{R/I_1} (k)=\begin{cases}
3c-3&\text{if}\quad a=c\\
3c-3+\alpha&\text{if}\quad a\geq c+1.
\end{cases}
\end{align*}
Furthermore, if $a\geq c+3$, then 
\begin{align*}
H_{R/I_1} (k) =H_{R/I_1} (k-1).
\end{align*}
\end{Lem}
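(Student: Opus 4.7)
The plan is to compute $H_{R/I_1}(k)$ directly from the minimal free resolution \eqref{freeresolution_for_b=3} specialized to $\beta = 1$, following the strategy used for $G$ in the proof of Lemma~\ref{LemmahilbertfunctionforGwith_b=3}. The resolution yields the alternating sum
\begin{align*}
H_{R/I_1}(t) &= \binom{t+2}{2} - \binom{t-1}{2} - \binom{t-a+2}{2} - \binom{t-c+2}{2} - \binom{t-a+\alpha}{2} - \binom{t-c-\alpha+3}{2}\\
&\quad + \binom{t-a-c+3}{2} + \binom{t-a}{2} + \binom{t-c}{2} + \binom{t-a+\alpha-1}{2} + \binom{t-c-\alpha+2}{2}\\
&\quad - \binom{t-a-c}{2}.
\end{align*}

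First I would substitute $t = k$ and exploit that $k = \lfloor(a+c-1)/2\rfloor \leq a-1$ together with $\alpha \in \{1,2\}$ forces every binomial whose top entry carries a $-a$ summand to have numerator at most $1$, and hence to vanish under the convention $\binom{n}{2} = 0$ for $n < 2$. This eliminates the six $a$-containing terms and leaves
\begin{align*}
H_{R/I_1}(k) = \binom{k+2}{2} - \binom{k-1}{2} - \binom{k-c+2}{2} - \binom{k-c-\alpha+3}{2} + \binom{k-c}{2} + \binom{k-c-\alpha+2}{2}.
\end{align*}

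Next I would split on whether $a \geq c+1$ or $a = c$. If $a \geq c+1$ then $k \geq c$ and $k-c-\alpha+2 \geq 2-\alpha \geq 0$, so the three telescoping identities $\binom{n+2}{2}-\binom{n-1}{2}=3n$, $\binom{n+2}{2}-\binom{n}{2}=2n+1$, and $\binom{n+1}{2}-\binom{n}{2}=n$ all apply (with $n = k$, $n = k-c$, and $n = k-c-\alpha+2$ respectively) and collapse the six-term sum to $3k - (2(k-c)+1) - (k-c-\alpha+2) = 3c+\alpha-3$. If $a = c$ then $k = c-1$, and all four binomials whose top entry involves $k-c$ have numerator at most $1$ and thus vanish, so the expression reduces to $3k = 3c-3$.

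For the stability statement, if $a \geq c+3$ then $k \geq c+1$, so $k-1 \geq c$, and of course $k-1 < a$ still holds; the six-term reduction and the same telescoping argument apply verbatim at degree $k-1$ and also yield $3c+\alpha-3$, matching $H_{R/I_1}(k)$. The main obstacle is the careful bookkeeping of which binomials hit their boundary values $\binom{1}{2}$ or $\binom{0}{2}$ so that the telescoping identities remain valid, but no essentially new idea beyond that in Lemmas~\ref{LemmahilbertfunctionforG} and~\ref{LemmahilbertfunctionforGwith_b=3} is required.
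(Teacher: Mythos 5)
Your proposal is correct and follows essentially the same route as the paper: both extract the alternating-sum formula for $H_{R/I_1}$ from the free resolution \eqref{freeresolution_for_b=3} with $\beta=1$, discard the six terms killed by $k\leq a-1$, and simplify the surviving binomials. The only difference is organizational --- you telescope uniformly using $k-c\geq 0$ and $k-c-\alpha+2\geq 0$ where the paper runs a short case analysis on $a-c$ --- and your handling of the boundary values of the binomial coefficients is accurate.
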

\begin{proof}
As $k-a<0$ and $c\geq \gamma+1\geq 2,$ it follows from \eqref{freeresolution_for_b=3} with $\beta=1$ that
\begin{align*}
H_{R/I_1}(k)=3k-\binom{k-c+1}{1}-\binom{k-c}{1}-\binom{k-c-\alpha+2}{1}.
\end{align*}
By considering the cases where $a=c+j$ for each  $j\in \{0,1,\ldots,4\} $ or $a\geq c+5$, it is easy to see that
\begin{align*}
H_{R/I_1}(k)=\begin{cases}
3c-3&\text{if}\quad a=c \\
3c-3+\alpha &\text{if}\quad  a\geq c+1.
\end{cases}
\end{align*}
A straightforward computations also shows that
\begin{align*}
H_{R/I_1}(k-1)=\begin{cases}
3c-6&\text{if}\quad a=c \\
3c-3&\text{if}\quad  a=c +1\; \text{or}\; a=c+2\\
3c-3+\alpha &\text{if}\quad  a\geq c+3.
\end{cases}
\end{align*}
Thus if $a\geq c+3$ then $H_{R/I_1}(k)=H_{R/I_1}(k-1).$
\end{proof}
The following is useful to prove the next results. Recall that the determinant of block matrices can be computed as follows: suppose $A, B,C$ and $D$ are  matrices  of size $n\times n, n\times m,m\times n$, and $m\times m$, respectively. Then
\begin{align}\label{determinantofBlockmatrix}
\det \bmt{ A &\vdots & B\\\cdots & \cdots &\cdots\\ C & \vdots & D} =\begin{cases}
\det (A) \det (D-CA^{-1}B)  &\text{if}\; A \;\text{is invertible}\\
\det (D) \det (A-BD^{-1}C) &\text{if}\; D \;\text{is invertible}.
\end{cases}
\end{align}
\begin{pro}\label{Propositionforb=3}
Consider the ideal
$$I_\beta=(x^a, y^3-x^\alpha z^{3-\alpha}, z^c, x^{a-\alpha}y^{3-\beta}, y^{3-\beta}z^{c+\alpha -3}),$$
with $a\geq c$ and $ 1\leq \alpha,\beta\leq 2$ such that $c+\alpha\geq 4$.  Then $R/I_\beta$ has the WLP.
\end{pro}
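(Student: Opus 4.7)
The plan is to follow the template of Proposition~\ref{Proposition3.6}, which handled the analogous case $c=2$. Set $\aa = (x^a,\, y^3 - x^\alpha z^{3-\alpha},\, z^c)$, $G = R/\aa$, and $A_\beta = R/I_\beta$. Proposition~\ref{Proposition3.1}(i) gives the tower
$$A_1 = \frac{G}{(0 \colon_G y)}, \qquad A_2 = \frac{A_1}{(0 \colon_{A_1} y)}.$$
Since $G$ is an Artinian complete intersection of codimension three in characteristic zero, it has the WLP by \cite[Corollary 2.4]{HMNW2003}. My strategy is to propagate this WLP along the chain $G\to A_1\to A_2$ by two applications of Proposition~\ref{Proposition2.7}, splitting into cases according to the parity of $a+c$.

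The socle degrees are $D_G=a+c$, $D_{A_1}=a+c-1$, and $D_{A_2}=a+c-2$. If $a+c$ is odd, $D_G$ is odd, and Proposition~\ref{Proposition2.7}(i) yields WLP of $A_1$ at once; the subsequent step $A_1\to A_2$ has even source socle degree, and Proposition~\ref{Proposition2.7}(ii) then applies whenever $H_{A_1}(k-1)=H_{A_1}(k)$ at $k=\lfloor(a+c-1)/2\rfloor$, which Lemma~\ref{LemmahilbertfunctionforGwith_b=3and_beta=1} supplies when $a\geq c+3$. If instead $a+c$ is even, the passage $G\to A_1$ requires $H_G(k-1)=H_G(k)$ at $k=(a+c)/2$, supplied by Lemma~\ref{LemmahilbertfunctionforGwith_b=3} when $a\geq c+4$, while the second passage is automatic via Proposition~\ref{Proposition2.7}(i) as $D_{A_1}$ is odd. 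This covers all parameters except the exceptional ranges $a\in\{c,c+2\}$ (in Step~1) and $a=c+1$ (in Step~2).

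For the exceptional subcases I would argue on the Hessian directly. In each of them the relevant algebra has odd socle degree $D$, so Proposition~\ref{Proposition2.8_Hessian}(i) reduces WLP to the nonvanishing of the middle Hessian of the Macaulay dual generator
$$F \;=\; \sum_{i=0}^{m} x^{a-1-i\alpha}\,y^{3i+2-\beta}\,z^{c-1-i(3-\alpha)}$$
coming from Proposition~\ref{Proposition3.1}(ii). When $D\leq 6$ (the subcases $a=c\leq 3$, or $a=c+2$ with $c=2$, or $a=c+1$ with $c\leq 3$), Proposition~\ref{Proposition2.6} applies immediately. For the remaining subcases I would follow the blueprint of the proof of Proposition~\ref{Propositionforbetalarge}: pick a monomial basis $\mathcal{B}$ of the middle component grouped by $z$-degree, and verify that $\Hess^{\lfloor D/2\rfloor}_\mathcal{B}(F)$ has nonzero entries along its anti-diagonal and zeros strictly below, so that its determinant is (up to sign) the product of those anti-diagonal entries and hence nonzero.

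The hard part will be this exceptional Hessian analysis. In the $c=2$ setting of Proposition~\ref{Propositionforbetalarge} the relation $z^2=0$ collapses most cross-terms and leaves a clean three-block structure, but for $c\geq 3$ the monomial basis stratifies into $c$ layers indexed by $z$-degree and $F$ carries up to $m+1$ summands; verifying the vanishing of the contractions $(u_i\cdot u_j)\circ F$ strictly below the anti-diagonal therefore has to be done block by block, and splits further according to the four parameter configurations $(\alpha,\beta)\in\{1,2\}^2$ and the precise value of $a-c\in\{0,1,2\}$. This combinatorial bookkeeping, rather than any new structural ingredient, is where the technical work will concentrate.
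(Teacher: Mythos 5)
Your reduction step is exactly the paper's: the tower $G\to A_1\to A_2$, WLP of the complete intersection $G$ via \cite{HMNW2003}, two applications of Proposition~\ref{Proposition2.7} governed by the parity of $a+c$, and Lemmas~\ref{LemmahilbertfunctionforGwith_b=3} and~\ref{LemmahilbertfunctionforGwith_b=3and_beta=1} to discharge all parameters outside the exceptional ranges $a\in\{c,c+2\}$ (for $I_1$) and $a=c+1$ (for $I_2$). That part is correct and is precisely how the paper proceeds.

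The gap is in your treatment of the exceptional cases. You propose to order a monomial basis of the middle component so that $\Hess^{k}(F)$ has nonzero anti-diagonal and zeros strictly below it, as in Proposition~\ref{Propositionforbetalarge}; but that structure is special to the situation where $F$ has only two monomials and $z^2=0$ kills almost all contractions, and it genuinely fails here. Take $a=c=4$, $\alpha=\beta=1$, so $I_1=(x^4,y^3-xz^2,z^4,x^3y^2,y^2z^2)$, $D=7$, $k=3$, and $F=x^3yz^3+x^2y^4z$. The nine basis monomials of $[R/I_1]_3$ each have \emph{at least two} distinct basis partners with nonzero contraction against $F$ (e.g.\ $x^3$ pairs nontrivially with both $yz^2$ and $z^3$), whereas an anti-triangular ordering forces the last basis element to have exactly one nonzero partner; so no ordering of the monomial basis makes the Hessian anti-triangular, and the determinant is not a product of anti-diagonal entries. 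This is why the paper abandons the Hessian in these subcases and instead either shows $[R/(I,L)]_{k+1}=0$ by an explicit induction in $K[x,z]/J$, or writes the matrix of $\times L$ in explicit bases, applies the block-determinant identity \eqref{determinantofBlockmatrix}, and reduces to the invertibility of a Toeplitz-type matrix such as $BA^{-1}B^t+CC^t$ (invoking \cite{AN2018} in the companion argument of Proposition~\ref{Proposition3.13}). Some genuinely different argument of this kind is needed; the ``combinatorial bookkeeping'' you defer is not bookkeeping but the missing core of the proof. (Your fallback to Proposition~\ref{Proposition2.6} only covers socle degree at most $6$, i.e.\ finitely many small parameter values, and the example above already escapes it.)
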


\begin{proof}
Recall that $G=R/(x^a, y^3-x^\alpha z^{3-\alpha}, z^c)$ is a complete intersection of codimension 3, hence it has the WLP.  We consider the following two cases:

\noindent{\bf \underline{Case 1: $a+c$ is odd}.}
In this case, the socle degree of $G$ is odd. Hence, $R/I_1$ has the WLP by Propositions~\ref{Proposition2.7}(i) and~\ref{Proposition3.1}. Now if $a\geq c+3$, then $R/I_2$ also has the WLP by Lemma~\ref{LemmahilbertfunctionforGwith_b=3and_beta=1} and Proposition~\ref{Proposition2.7}(ii).
Thus, we only need to prove that 
$$I_2=(x^a, y^3-x^\alpha z^{3-\alpha}, z^{a-1}, x^{a-\alpha}y, yz^{a+\alpha-4})$$
 has the WLP. In this case, one has  $D=2a-3$ and $k=a-2$. 

\noindent{\bf \underline{Subcase 1: $\alpha=1$}.} Set $L=x-y+z$. By Proposition~\ref{Proposition2.5}, it suffices to show that
$$\times L: [R/I_2]_{a-2}\longrightarrow [R/I_2]_{a-1}$$
is an isomorphism, or equivalently $[R/(I_2,L)]_{a-1}=0.$ We have
\begin{align*}
R/(I_2,L)\cong K[x,z]/J,
\end{align*}
where  $J=(x^a,x^3+3x^2z+2xz^2+z^3,z^{a-1},x^{a-1}z, xz^{a-3}+z^{a-2}).$
We will prove that $[K[x,z]/J]_{a-1}=0$, or equivalent $x^iz^{a-1-i}\in J$ for all $0\leq i\leq a-1.$ We do it by induction on $i$. We first see that $xz^{a-2}$ and $x^2z^{a-3}\in J$ since $z^{a-1},xz^{a-3}+z^{a-2}\in J$. For any $i\geq 3$, one has
\begin{align*}
x^iz^{a-1-i}&=x^3x^{i-3}z^{a-1-i} =-(z^3+2xz^2+3x^2z)x^{i-3}z^{a-1-i} \\
&=-x^{i-3}z^{a+2-i}-3x^{i-2}z^{a+1-i}-3x^{i-1}z^{a-i}\in J,
\end{align*}
by the induction hypothesis .

\noindent{\bf \underline{Subcase 2: $\alpha=2$}.} In this case,
$$I_2=(x^a, y^3-x^2 z, z^{a-1}, x^{a-2}y, yz^{a-2}).$$
It follows from \eqref{freeresolution_for_b=3} that
\begin{align*}
H_{R/I_2}(a-2) =H_{R/I_2}(a-1)=\binom{a}{2}-\binom{a-3}{2}=3a-6.
\end{align*}
A $K$-linear basis of $[R/I_2]_{a-2}$ is
$\mathcal{B} =\mathcal{B}_1\sqcup \mathcal{B}_2\sqcup \mathcal{B}_3,$
where
\begin{align*}
\mathcal{B}_1 &= \{ u_i =x^{a-1-i} z^{i-1} \mid i=1,2,\ldots,a-1\}\\
\mathcal{B}_2 &= \{v_{i} =x^{a-2-i} yz^{i-1} \mid i=1,2,\ldots,a-2    \}\\
\mathcal{B}_3 &= \{ w_{i} =x^{a-3-i} y^2 z^{i-1} \mid i=1,2,\ldots, a-3 \}
\end{align*}
and a $K$-linear basis of $[R/I_2]_{a-1}$ is
$\mathcal{B}^\prime =\mathcal{B}_1^\prime\sqcup \mathcal{B}_2^\prime\sqcup \mathcal{B}_3^\prime,$
where
\begin{align*}
\mathcal{B}_1^\prime &= \{ u_i^\prime =x^{a-i} z^{i-1} \mid i=1,2,\ldots,a-1\}\\
\mathcal{B}_2^\prime &= \{v_{i}^\prime =x^{a-2-i} y^2z^{i-1} \mid i=1,2,\ldots,a-2    \}\\
\mathcal{B}_3^\prime &= \{ w_{i}^\prime =x^{a-2-i} y z^{i} \mid i=1,2,\ldots, a-3 \}.
\end{align*}
Set $L=x+y+z$. By Proposition~\ref{Proposition2.5}, it is enough to show that
$$\times L: [R/I_2]_{a-2}\longrightarrow [R/I_2]_{a-1}$$
is an isomorphism, or equivalently the matrix representation $M$ of $\times L$ with respect to these bases  has nonzero determinant. 
It is easy to see that
\begin{align*}
\times L(u_i)&=x^{a-i}z^{i-1} +x^{a-1-i}z^{i} +x^{a-1-i}yz^{i-1}\\
\times L(v_i)&=x^{a-1-i}yz^{i-1} +x^{a-2-i}yz^{i} +x^{a-2-i}y^2z^{i-1}\\
\times L(w_i)&=x^{a-1-i}z^{i} +x^{a-2-i}y^2z^{i-1} +x^{a-3-i}y^2z^{i}
\end{align*}
since $y^3=x^2z$ in $R/I_2$. It follows that
\begin{align*}
M=\bmt{ A &\vdots &0&\vdots & B^t\\\cdots & \cdots &\cdots&\cdots&\cdots\\ 0 & \vdots & \Id & \vdots & C^t\\\cdots & \cdots &\cdots&\cdots&\cdots\\ B & \vdots &C& \vdots & 0},
\end{align*}
where $A,B$ and $C$ are matrices of size $(a-1)\times(a-1),(a-3)\times(a-1)$ and  $(a-3)\times(a-2)$, respectively
\begin{align*}
A=\bmt{1&0&\cdots & 0 &0 \\1&1&\cdots & 0 &0 \\\vdots &\vdots &\ddots & \vdots &\vdots\\ 0&0&\cdots & 1 &0\\ 0&0&\cdots & 1 &1 },\; B=\left[
\begin{array}{c| c c c c c | c}
0& 1&0&\cdots &0&0&0\\
0& 0&1&\cdots &0&0&0\\
\vdots &\vdots &\vdots &\ddots & \vdots &\vdots &\vdots \\
0& 0&0&\cdots &1&0&0\\
0& 0&0&\cdots &0&1&0\\
\end{array}
\right], \; C=\left[
\begin{array}{c c c c  c | c}
1& 1&\cdots &0&0&0\\
0& 1&\cdots &0&0&0\\
\vdots &\vdots  &\ddots & \vdots &\vdots &\vdots \\
0& 0&\cdots &1&1&0\\
0& 0&\cdots &0&1&1\\
\end{array}
\right].
\end{align*}
Set
$N=\bmt{ A &\vdots &0&\\\cdots & \cdots &\cdots\\ 0 & \vdots & \Id}.$
Then $\det(N)=1$ and a computation shows that
\begin{align*}
P& =\bmt{B&\vdots &C}\bmt{ A^{-1} &\vdots &0&\\\cdots & \cdots &\cdots\\ 0 & \vdots & \Id}\bmt{B^t\\\cdots \\C^t} =B A^{-1}B^t+ C C^t\\
& =\left[\begin{array}{c c c c c c c}
3&1&0&\cdots &0 &0 &0\\ 0&3&1&\cdots &0 &0 &0 \\ 1&0&3&\cdots &0 &0 &0 \\ \vdots&\vdots&\vdots&\ddots &\vdots &\vdots &\vdots\\
(-1)^{a-6}&(-1)^{a-7}&(-1)^{a-8}&\cdots &3 &1 &0 \\(-1)^{a-5}&(-1)^{a-6}&(-1)^{a-7}&\cdots &0 &3 &1 \\ (-1)^{a-4}&(-1)^{a-5}&(-1)^{a-6}&\cdots &1&0 &3
\end{array}\right]
\end{align*}
has nonzero determinant. Thus , by \eqref{determinantofBlockmatrix}, $\det(M)=\det(N)\det (-P)\neq 0$.

\noindent{\bf \underline{Case 2: $a+c$ is even}.} In this case, the socle degree of $G$ is even. By Propositions~\ref{Proposition2.7}(i) and~\ref{Proposition3.1}, it is enough to show that $R/I_1$ has the WLP. If $a\geq c+4$, then $R/I_1$ has the WLP by Proposition~\ref{Proposition2.7}(ii) and Lemma~\ref{LemmahilbertfunctionforGwith_b=3}.
It remains to consider the cases where $a=c$ or $a=c+2.$

\noindent{\bf \underline{Subcase 1: $a=c+2$}.} In this case, $k=a-2.$ Set $L=x-y+z$. By Proposition~\ref{Proposition2.5}, it suffices to show that
$$\times L: [R/I_1]_{a-2}\longrightarrow [R/I_1]_{a-1}$$
is an isomorphism, or equivalently $[R/(I_1,L)]_{a-1}=0.$ We have
\begin{align*}
R/(I_1,L)\cong K[x,z]/J,
\end{align*}
where  
$$J=(x^a,x^3+3x^2z+3xz^2+z^3-x^{\alpha}z^{3-\alpha},z^{a-2},x^{a-\alpha}z^2+2x^{a+1-\alpha}z, x^2z^{a+\alpha-5}+2xz^{a+\alpha-4}).$$
We will prove that $[K[x,z]/J]_{a-1}=0$, or equivalently $x^iz^{a-1-i}\in J$ for all $0\leq i\leq a-1.$ We do it by induction on $i$. It is easy to see that $z^{a-1}$ and $xz^{a-2}\in J$ since $z^{a-2}\in J$ and $x^2z^{a-3}\in J$ since $x^2z^{a+\alpha-5}+2xz^{a+\alpha-4}\in J$. For any $i\geq 3$, one has
\begin{align*}
x^iz^{a-1-i}&=x^3x^{i-3}z^{a-1-i} =-(z^3+3xz^2+3x^2z-x^\alpha z^{3-\alpha})x^{i-3}z^{a-1-i} \\
&=-x^{i-3}z^{a+2-i}-3x^{i-2}z^{a+1-i}-3x^{i-1}z^{a-i}+x^{i+\alpha-3}z^{a+2-\alpha-i}\in J,
\end{align*}
by the induction hypothesis .

\noindent{\bf \underline{Subcase 2: $a=c$}.} By symmetry of $x$ and $z$, we can assume $\alpha=1$. More precisely, consider the ideal
$$I_1=(x^a, y^3-x z^{2}, z^{a}, x^{a-1}y^2, y^2z^{a-2}).$$
It follows that $k=a-1$ and 
$$H_{R/I_1}(a)= H_{R/I_1}(a-1) =3a-3$$
by Lemma~\ref{LemmahilbertfunctionforGwith_b=3and_beta=1}. It is easy to see that  $[R/I_1]_{a-1}$ has a basis $\mathcal{B} =\mathcal{B}_1\sqcup \mathcal{B}_2\sqcup \mathcal{B}_3,$
 where
 \begin{align*}
 \mathcal{B}_1 &= \{ u_i =x^{a-i} z^{i-1} \mid i=1,2,\ldots,a\}\\
 \mathcal{B}_2 &= \{ v_{i} =x^{a-1-i} y z^{i-1} \mid i=1,2,\ldots, a-1  \}\\
 \mathcal{B}_3 &= \{ w_{i} =x^{a-2-i} y^2z^{i-1} \mid i=1,2,\ldots,a-2 \}
 \end{align*}
 and $[R/I_1]_{a}$ has a basis $\mathcal{B}^\prime =\mathcal{B}_1^\prime\sqcup \mathcal{B}_2^\prime\sqcup \mathcal{B}_3^\prime,$
 where
 \begin{align*}
 \mathcal{B}_1^\prime &= \{ u_i^\prime =x^{a-i}y z^{i-1} \mid i=1,2,\ldots,a\}\\
 \mathcal{B}_2^\prime &= \{ v_{i}^\prime =x^{a-i} z^{i} \mid i=1,2,\ldots, a-1  \}\\
 \mathcal{B}_3^\prime &= \{ w_{i}^\prime =x^{a-1-i} y^2z^{i-1} \mid i=1,2,\ldots,a-2 \}.
 \end{align*}
  Set $L=x+y+z$. By Proposition~\ref{Proposition2.5}, it is enough to show that
 $$\times L: [R/I_1]_{a-1}\longrightarrow [R/I_1]_{a}$$
 is an isomorphism, or equivalently the matrix representation $M$ of $\times L$ with respect to these bases  has nonzero determinant. 
 It is easy to see that
 \begin{align*}
 \times L(u_i)&=x^{a+1-i}z^{i-1} +x^{a-i}z^{i} +x^{a-i}yz^{i-1}\\
 \times L(v_i)&=x^{a-i}yz^{i-1} +x^{a-1-i}yz^{i} +x^{a-1-i}y^2z^{i-1}\\
 \times L(w_i)&=x^{a-1-i}y^2z^{i-1} +x^{a-2-i}y^2z^{i} +x^{a-1-i}z^{i+1},
 \end{align*}
 as $y^3=x z^2$ in $R/I_1$. It follows that 
 \begin{align*}
 M=\bmt{ \Id &\vdots &A^t&\vdots & 0\\\cdots & \cdots &\cdots&\cdots&\cdots\\ A & \vdots & 0 & \vdots & C\\\cdots & \cdots &\cdots&\cdots&\cdots\\ 0& \vdots &B& \vdots & D},
 \end{align*}
 where $A,B,C$ and $D$ are the matrices of size $(a-1)\times a,(a-2)\times(a-1), (a-1)\times(a-2)$ and $(a-2)\times(a-2)$ respectively, where 
 \begin{align*}
 &A=\left[
 \begin{array}{c c c c  c | c}
 1& 1&\cdots &0&0&0\\
 0& 1&\cdots &0&0&0\\
 \vdots &\vdots  &\ddots & \vdots &\vdots &\vdots \\
 0& 0&\cdots &1&1&0\\
 0& 0&\cdots &0&1&1\\
 \end{array}
 \right], \quad B=\left[
 \begin{array}{ c c c c c | c}
  1&0&\cdots &0&0&0\\
  0&1&\cdots &0&0&0\\
  \vdots &\vdots &\ddots & \vdots &\vdots &\vdots \\
  0&0&\cdots &1&0&0\\
  0&0&\cdots &0&1&0\\
 \end{array}
 \right],\\
 &C=\left[
 \begin{array}{ c c c c c} 
 0&0&\cdots&0&0\\
 \hline 1&0&\cdots &0&0\\
 0&1&\cdots &0&0\\
 \vdots &\vdots &\ddots & \vdots &\vdots \\
 0&0&\cdots &1&0\\
 0&0&\cdots &0&1\\
 \end{array}
 \right],  \;\qquad D=\bmt{1&0&\cdots & 0 &0 \\1&1&\cdots & 0 &0 \\\vdots &\vdots &\ddots & \vdots &\vdots\\ 0&0&\cdots & 1 &0\\ 0&0&\cdots & 1 &1 }.
 \end{align*}
 It follows from \eqref{determinantofBlockmatrix} that
 \begin{align*}
 \det(M)&=\det \bmt{ -AA^t&\vdots & C\\ \cdots&\cdots&\cdots\\   B& \vdots & D}\\
 &=\det  \big(-AA^t-CD^{-1}B \big).
  \end{align*} 
 A computation as in the proof of the above subcase 2 in Case 1 shows that the matrix $P=AA^t+CD^{-1}B$ has nonzero determinant, hence $\det(M)\neq 0.$ 
 \end{proof}

Now we consider the case where $c=3$. More precisely, consider
$$I_\beta=(x^a, y^b-x^\alpha z^\gamma, z^3, x^{a-\alpha}y^{b-\beta}, y^{b-\beta}z^{3-\gamma})\subset R,$$
where $1\leq \alpha\leq a-1,\, 1\leq \beta\leq b-1$ and $1\leq \gamma\leq 2$ such that $\alpha+\gamma =b.$ It is clear that $a\geq b-1$ and $a\geq 3$.  

First, we have the following.
\begin{Lem}\label{LemmahilbertfunctionforGwith_3=3}
Set $G= R/(x^a, y^b-x^{\alpha} z^{b-\alpha}, z^3)$ and $k=\lfloor\frac{a+b}{2}\rfloor$. If $a\geq b-1,$ then
\begin{align*}
H_{G} (k)=\begin{cases}
3b-4&\text{if}\quad a=b-1\\
3b-2&\text{if}\quad a=b\\
3b-1&\text{if}\quad a=b+1\\
3b&\text{if}\quad a\geq b+2.
\end{cases}
\end{align*}
Furthermore, if $a\geq b+4$, then 
\begin{align*}
H_{G} (k) =H_{G} (k-1).
\end{align*}
\end{Lem}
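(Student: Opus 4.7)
The plan is to follow the template used in the proof of Lemma \ref{LemmahilbertfunctionforGwith_b=3}. Since $G$ is an Artinian complete intersection with a regular sequence of generators of degrees $a$, $b$, and $3$, its minimal free resolution is the Koszul complex, which expresses $H_G(k)$ as an alternating sum of eight binomial coefficients. The hypothesis $a\geq b-1$ immediately forces $k=\lfloor(a+b)/2\rfloor\leq a$, so three of the four Koszul terms shifted by $-a$ vanish automatically. Combining the remaining contributions with the identity $\binom{k+2}{2}-\binom{k-1}{2}=3k$ collapses the formula to
$$H_G(k)=3k-\binom{k-a+2}{2}-\binom{k-b+2}{2}+\binom{k-b-1}{2},$$
with the convention $\binom{n}{m}=0$ when $n<m$.

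From here I would split on the parity of $a+b$, writing $k=(a+b-1)/2$ in the odd case and $k=(a+b)/2$ in the even case. The borderline cases $a=b-1$, $a=b$, $a=b+1$ each force a specific parity and yield the three exceptional values $3b-4$, $3b-2$, $3b-1$ after direct substitution into the reduced formula. For $a\geq b+2$ the binomial $\binom{k-a+2}{2}$ vanishes regardless of parity, and a uniform computation with the two remaining binomials in both parity subcases produces $H_G(k)=3b$.

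For the \textit{furthermore} clause I would apply the same reduction at degree $k-1$ to obtain
$$H_G(k-1)=3(k-1)-\binom{k-a+1}{2}-\binom{k-b+1}{2}+\binom{k-b-2}{2}.$$
The sharper assumption $a\geq b+4$ ensures $k-a\leq-2$, so the $a$-term still vanishes, and $k-b-2\geq 0$, which puts the remaining binomials in their stable range where they expand as quadratic polynomials in $a-b$. A parity-by-parity substitution then shows the $a$-dependent contributions cancel, leaving $H_G(k-1)=3b=H_G(k)$.

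The only real obstacle is the tedious bookkeeping in the small cases $a-b\in\{-1,0,1,2,3\}$, where boundary effects on the binomial coefficients must be tracked carefully according to the parity of $a+b$. No conceptual difficulty is anticipated; the whole argument is a direct mirror of the proof of Lemma \ref{LemmahilbertfunctionforGwith_b=3}, with the roles of the generators of degrees $3$ and $c$ interchanged.
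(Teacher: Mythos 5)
Your proposal is correct and follows exactly the route the paper takes: the paper's proof of this lemma simply defers to the proof of Lemma~\ref{LemmahilbertfunctionforGwith_b=3}, which is precisely the Koszul-resolution reduction to $H_G(k)=3k-\binom{k-a+2}{2}-\binom{k-b+2}{2}+\binom{k-b-1}{2}$ followed by a parity and case-by-case evaluation. All the reduced formulas and boundary values you list check out, including the vanishing of the remaining $a$-terms under $a\geq b+4$ for the final claim.
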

\begin{proof}
The proof proceeds along the same lines as in Lemma~\ref{LemmahilbertfunctionforGwith_b=3}.
\end{proof}

Recall that the socle degree of $R/I_\beta$ is $D=a+b-\beta$ and $k=\lfloor \frac{a+b-\beta}{2}\rfloor.$   Since the free resolution of $R/I_\beta$ is
\begin{align}\label{resolutionforb=3}
{\footnotesize  \xymatrix{ 0\ar[r] & R(-a-b-3+\beta) \ar[r] & {\begin{array}{c} R(-a-b+\beta)\\ \oplus \\R(-a-3+\beta)\\\oplus \\R(-b-3+\beta)\\\oplus \\R(-a-\gamma)\\ \oplus \\R(-3-\alpha) \end{array}}\ar[r]&  {\begin{array}{c} R(-a)\\ \oplus \\R(-b)\\\oplus \\R(-3)\\\oplus \\R(-a-\gamma+\beta)\\ \oplus \\R(-3-\alpha+\beta ) \end{array}}\ar[r] & R \ar[r]&R/I_\beta\ar[r]& 0,}}
\end{align}
we can determine the Hilbert function of $R/I_\beta$ in degree $k$ as follows.
\begin{Lem}\label{lemmakeyforc=3}
For every $1\leq \beta\leq b-1$. Set $k=\lfloor \frac{a+b-\beta}{2}\rfloor.$ 
\begin{enumerate}
\item[\rm (1)] If $\gamma =1$, then $a\geq b$ and 
\begin{align*}
H_{R/I_\beta} (k)&=\begin{cases} 
3b-3&\text{if}\quad a=b \; \text{and}\; \beta =1\\
3b-\beta&\text{if}\quad \beta \leq a-b\\
3b-\beta-1&\text{if}\quad \beta =a-b+1\geq 2\\
2a+b-3\beta+2&\text{if}\quad \beta\geq a-b+2.
\end{cases}
\end{align*}

\item [\rm (2)] If $\gamma =2$, then
\begin{align*}
H_{R/I_\beta} (k)&=\begin{cases} 
3b-4&\text{if}\quad a=b-1 \; \text{and}\; \beta =1\\
3b-3&\text{if}\quad a=b \; \text{and}\; \beta =1\\
3b-2\beta&\text{if}\quad \beta \leq a-b+1\; \text{and}\; a\neq b\\
a+2b-3\beta+2&\text{if}\quad \beta\geq a-b+2 \geq 2.
\end{cases}
\end{align*}
\item [\rm (3)] Furthermore, if $1\leq \beta\leq a-b-2$, then 
\begin{align*}
H_{R/I_\beta} (k) =H_{R/I_\beta} (k-1).
\end{align*}
\end{enumerate}
\end{Lem}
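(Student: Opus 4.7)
The plan is to imitate closely the argument used in Lemma~\ref{lemmakey}: read off $H_{R/I_\beta}(k)$ directly from the free resolution \eqref{resolutionforb=3} of $R/I_\beta$, and then reduce the resulting alternating sum of binomials to one of the claimed closed forms by a case split on (a) the parity of $a+b-\beta$ (which fixes whether $k=\frac{a+b-\beta}{2}$ or $k=\frac{a+b-\beta-1}{2}$), (b) the value $\gamma\in\{1,2\}$, and (c) the size of $\beta$ relative to $a-b$. The hypothesis $\alpha+\gamma=b$ together with $a\ge b-1$ (when $\gamma=2$) or $a\ge b$ (when $\gamma=1$) will be used throughout to decide which of the shifts $R(-a),R(-a-\gamma+\beta),R(-a-b+\beta)$, etc.\ contribute in degree $k$.

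The first step is the following clean formula. Since $k\le \frac{a+b-\beta}{2}\le a-1$ under our hypotheses, the term $\binom{k-a+1}{1}$ vanishes, and similarly the two terms $\binom{k-a-b+\beta+j}{1}$ vanish. Hence from \eqref{resolutionforb=3} one obtains
\begin{align*}
H_{R/I_\beta}(k) \;=\; \binom{k+2}{2}-\binom{k-b+2}{2}-\binom{k-1}{2}
-\binom{k-a-\gamma+\beta+1}{1}-\binom{k-\alpha+\beta-1}{1},
\end{align*}
with the convention $\binom{n}{m}=0$ when $n<m$. The rest of part (1) and part (2) is then a direct, if tedious, computation. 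For $\gamma=1$ one has $\alpha=b-1$, so the last binomial contributes only when $k\ge b-\beta$, which cuts the analysis into the three ranges $\beta\le a-b$, $\beta=a-b+1$, and $\beta\ge a-b+2$ (together with the exceptional low case $a=b$, $\beta=1$ where the second binomial still vanishes). For $\gamma=2$ one has $\alpha=b-2$, so the contribution of $\binom{k-\alpha+\beta-1}{1}$ shifts by one; the same kind of partition of $\beta$ into ranges $\beta\le a-b+1$ and $\beta\ge a-b+2$ yields the claimed values, with the two low exceptions $a=b-1$, $\beta=1$ and $a=b$, $\beta=1$ where one extra binomial vanishes. In each branch one treats the two parities of $a+b-\beta$ separately; the arithmetic is straightforward telescoping, completely analogous to Cases~1--4 of Lemma~\ref{lemmakey}, and yields the stated formulas.

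For part (3), assume $1\le\beta\le a-b-2$. The inequality $\beta\le a-b-2$ forces $k-a+\beta\le -2$ and $k-a-\gamma+\beta\le -2$, so the analogous formula for $H_{R/I_\beta}(k-1)$ again loses three of the binomial contributions and becomes
\begin{align*}
H_{R/I_\beta}(k-1) \;=\; \binom{k+1}{2}-\binom{k-b+1}{2}-\binom{k-2}{2}-\binom{k-\alpha+\beta-2}{1}.
\end{align*}
Subtracting from the corresponding formula for $H_{R/I_\beta}(k)$ collapses almost completely: $\binom{k+2}{2}-\binom{k+1}{2}=k+1$, $\binom{k-b+2}{2}-\binom{k-b+1}{2}=k-b+1$, $\binom{k-1}{2}-\binom{k-2}{2}=k-2$, and the difference of the last two binomials contributes $0$ or $1$ depending on parity. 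Either directly, or by substituting the explicit values of $H_{R/I_\beta}(k)$ proved in parts~(1)--(2) into both $\beta\le a-b$ cases, one verifies $H_{R/I_\beta}(k)=H_{R/I_\beta}(k-1)$ in every parity.

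The only real obstacle is the bookkeeping: the two values of $\gamma$, two parities of $a+b-\beta$, and three or four ranges of $\beta$ give roughly a dozen microcases, each an easy arithmetic check. The structural content is just that all binomial shifts involving $a$ (equivalently, all syzygies of degrees $\ge a$) sit strictly above degree~$k$ under the stated hypotheses, so the Hilbert function is governed by the low-degree relations $x^a,y^b-x^\alpha z^\gamma,z^3$ and the two ``extra'' generators of degrees $a-\gamma+\beta$ and $3+\alpha-\beta$ of $I_\beta$; once this is noted, the calculation proceeds exactly as in Lemma~\ref{lemmakey}.
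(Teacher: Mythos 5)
Your overall strategy is the same as the paper's: read $H_{R/I_\beta}(k)$ off the free resolution \eqref{resolutionforb=3} as an alternating sum of binomial coefficients and then run a case analysis on the parity of $a+b-\beta$, on $\gamma$, and on the size of $\beta$ relative to $a-b$. The problem is that the ``clean formula'' you build everything on is not correct. The resolution contributes, besides $\binom{k+2}{2}-\binom{k-a+2}{2}-\binom{k-b+2}{2}-\binom{k-1}{2}$, the terms $-\binom{k-a-\gamma+\beta+2}{2}-\binom{k-\alpha+\beta-1}{2}$ from the two extra generators and $+\binom{k-a+\beta-1}{2}+\binom{k-b+\beta-1}{2}+\binom{k-a-\gamma+2}{2}+\binom{k-\alpha-1}{2}$ from the first syzygies (plus the $R(-a-b+\beta)$ and last terms, which do vanish since $k<a+b-\beta$). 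A difference such as $-\binom{k-\alpha+\beta-1}{2}+\binom{k-\alpha-1}{2}$ telescopes to $-\sum_{j=0}^{\beta-1}\binom{k-\alpha-1+j}{1}$, i.e.\ a sum of $\beta$ first-order binomials, not the single term $-\binom{k-\alpha+\beta-1}{1}$ you wrote; the same objection applies to your $-\binom{k-a-\gamma+\beta+1}{1}$, and the contribution $+\binom{k-b+\beta-1}{2}$ of $R(-b-3+\beta)$ has disappeared from your formula altogether even though it is nonzero for $\beta$ large. Concretely: for $(a,b,\gamma,\beta)=(7,5,1,1)$ one has $k=5$ and the correct value is $3b-\beta=14$, while your formula evaluates to $13$; for $(a,b,\gamma,\beta)=(10,10,1,8)$ the correct value is $2a+b-3\beta+2=8$, while your formula gives $10$. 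Since parts (1)--(3) are then declared to follow by ``direct computation'' from this formula, the error propagates through the whole argument.

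Two smaller points. First, your justification $k\le\frac{a+b-\beta}{2}\le a-1$ for discarding $\binom{k-a+2}{2}$ fails exactly in the case $\gamma=2$, $a=b-1$, $\beta=1$, where $k=a$ and that binomial equals $1$; this is precisely why the paper isolates $H=3b-4$ as a separate branch rather than absorbing it into a uniform formula. Second, in part (3) the suggestion to verify $H(k)=H(k-1)$ ``by substituting the explicit values of $H(k)$ from parts (1)--(2)'' does not work: parts (1)--(2) compute the Hilbert function only in degree $k$, so $H(k-1)$ needs its own computation from the resolution, exactly as the paper (and Lemma~\ref{lemmakey}) does. The fix is to keep all the second-order binomials from \eqref{resolutionforb=3}, reduce them to first-order binomials only after fixing $\gamma$ (as in the paper's two displayed intermediate formulas), and then carry out the parity and range-of-$\beta$ case analysis on those correct expressions.
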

\begin{proof}
Notice that $k<a+b-\beta$ and $k<a+\gamma$.  It follows from \eqref{resolutionforb=3} that 
{\small \begin{align*} 
H_{R/I_\beta} (k) = &\binom{k+2}{2} -\binom{k-1}{2}  -\binom{k-a+2}{2}-\binom{k-b+2}{2}-\binom{k-a-\gamma+\beta+2}{2}\\
&-\binom{k-\alpha+\beta-1}{2}+\binom{k-a+\beta-1}{2}+\binom{k-b+\beta-1}{2}+\binom{k-\alpha-1}{2}.
\end{align*}}
If $\gamma=1$ then $a\geq b$ and 
{\footnotesize   \begin{align*} 
H_{R/I_\beta}(k)=3k -\binom{k-b+1}{1}-\binom{k-b}{1}-\binom{k-b+\beta-1}{1}-\binom{k-a+\beta}{1}-\binom{k-a+\beta-1}{1}.
\end{align*}}
If $\gamma=2$, then
\begin{align*} 
H_{R/I_\beta}(k)&
=\begin{cases}
3b-4&\text{if} \ a=b-1\ \text{and}\ \beta=1\\
3k -\binom{k-b+1}{1}-\binom{k-b+\beta}{1}-\binom{k-b+\beta-1}{1}-\binom{k-a+\beta-1}{1}&\text{otherwise}.
\end{cases}
\end{align*}

Firstly, if $a+b-\beta$ is even, then $k=\frac{a+b-\beta}{2}$.  

\noindent {\bf \underline{Case 1: $\beta\geq a-b+2$.}} If  $\beta\geq a-b+6$, then $k-a+\beta \geq 3,\ k-b+\beta\geq 2$ and $k-b+3\leq 0$, hence
\begin{align*}
H_{R/I_\beta}(k)=\begin{cases}
2a+b-3\beta+2&\text{if}\quad \gamma=1\\
a+2b-3\beta+2&\text{if}\quad \gamma=2.
\end{cases}
\end{align*}
 A direct computation for the cases where $\beta=a-b+2$ or $\beta=a-b+4$ also shows that
 \begin{align*}
 H_{R/I_\beta}(k)=\begin{cases}
 2a+b-3\beta+2&\text{if}\quad \gamma=1\\
 a+2b-3\beta+2&\text{if}\quad \gamma=2.
 \end{cases}
 \end{align*}

\noindent {\bf \underline{Case 2: $\beta\leq a-b$.}} If $\beta\leq a-b-2$, then  $k-a+\beta +1\leq 0$ and $k-b\geq 1$, hence a straightforward computation shows that
\begin{align*}
H_{R/I_\beta}(k)=\begin{cases}
3b-\beta&\text{if}\quad \gamma=1\\
3b-2\beta&\text{if}\quad \gamma=2.
\end{cases}
\end{align*}
If $\beta=a-b$, then a simple computation shows that 
\begin{align*}
H_{R/I_\beta}(k)=\begin{cases}
3b-\beta&\text{if}\quad \gamma=1\\
3b-2\beta&\text{if}\quad \gamma=2.
\end{cases}
\end{align*}

Secondly, if $a+b-\beta$ is odd, then $k=\frac{a+b-\beta-1}{2}$.  The proof is similar to the case  $a+b-\beta$ even.

Finally, a similar computation also shows that  if $1\leq \beta\leq a-b-2$, then 
\begin{align*}
H_{R/I_\beta} (k) =H_{R/I_\beta} (k-1).
\end{align*}
\end{proof}

\begin{pro} \label{Propositionforbetasmallandc=3}
Assume that  $a\geq b+1,\ 1\leq \alpha\leq a-1$  and $1\leq \gamma\leq 2$ such that $\alpha+\gamma=b$. If   $1\leq \beta\leq  \min\{a-b,b-1\}$, then the ideal
$$I_\beta=(x^a, y^b-x^{\alpha} z^\gamma, z^3, x^{a-\alpha}y^{b-\beta}, y^{b-\beta}z^{3-\gamma})\subset R$$
has the WLP.
\end{pro}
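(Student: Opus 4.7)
Set $G=R/(x^a, y^b-x^\alpha z^\gamma, z^3)$. Since $G$ is an Artinian complete intersection of codimension three in characteristic zero, it has the WLP by \cite[Corollary~2.4]{HMNW2003}. By Proposition~\ref{Proposition3.1}, the algebra $A_\beta:=R/I_\beta$ is Artinian Gorenstein of codimension three with socle degree $D_\beta=a+b-\beta$, and one has $A_1\cong G/(0:_G y)$ and $A_\beta\cong A_{\beta-1}/(0:_{A_{\beta-1}} y)$ for $2\leq \beta\leq b-1$. The plan is to prove WLP of $A_\beta$ by induction on $\beta$, applying Proposition~\ref{Proposition2.7} with the linear form $\ell=y$ at every step.

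For the inductive step $A_{\beta-1}\to A_\beta$ with $\beta\geq 2$, the source socle degree is $D_{\beta-1}=a+b-\beta+1$. If $D_{\beta-1}$ is odd, Proposition~\ref{Proposition2.7}(i) immediately transfers WLP to $A_\beta$. If $D_{\beta-1}$ is even, Proposition~\ref{Proposition2.7}(ii) requires the Hilbert equality $H_{A_{\beta-1}}(k-1)=H_{A_{\beta-1}}(k)$ with $k=\lfloor D_{\beta-1}/2\rfloor$, and Lemma~\ref{lemmakeyforc=3}(3) applied to $A_{\beta-1}$ (with the index shifted by one) supplies exactly this equality whenever $1\leq \beta-1\leq a-b-2$, i.e.\ $2\leq \beta\leq a-b-1$. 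The boundary value $\beta=a-b$ is harmless because then $D_{\beta-1}=2b+1$ is odd, so Proposition~\ref{Proposition2.7}(i) again applies without any Hilbert assumption.

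For the base case $\beta=1$ we pass from $G$ to $A_1$ using Proposition~\ref{Proposition2.7} with source socle degree $D_G=a+b$. When $a+b$ is odd, Proposition~\ref{Proposition2.7}(i) finishes directly. When $a+b$ is even and $a\geq b+4$, Lemma~\ref{LemmahilbertfunctionforGwith_3=3} furnishes the Hilbert equality $H_G(k-1)=H_G(k)$, and Proposition~\ref{Proposition2.7}(ii) applies. Since the hypothesis forces $a\geq b+1$, the only subcase escaping both branches is $a=b+2$ with $a+b$ even; there $\min\{a-b,b-1\}\leq 2$, and the inductive argument already takes care of $\beta=2$ because $D_{A_1}=2b+1$ is odd. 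Only $\beta=1$ at $a=b+2$ needs a separate argument.

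For this remaining edge case, the plan is to argue directly, imitating the method used in Case~2 Subcase~1 of Proposition~\ref{Propositionforb=3}. Take $L=x-y+z$; since $D_{A_1}=2b+1$ is odd with $k=b$, Proposition~\ref{Proposition2.5}(i) reduces WLP of $A_1$ to the surjectivity of $\times L \colon [A_1]_b\to [A_1]_{b+1}$, that is, to $[R/(I_1,L)]_{b+1}=0$. Since $R/(I_1,L)\cong K[x,z]/J$, where $J$ is obtained from $I_1$ by substituting $y\mapsto x+z$, the task becomes showing $x^iz^{b+1-i}\in J$ for every $0\leq i\leq b+1$. Using $z^3\in J$ to dispose of the low-$i$ monomials, and the monic degree-$b$ polynomial in $x,z$ extracted from $(x+z)^b-x^\alpha z^\gamma$ modulo $z^3$ to propagate, a straightforward induction on $i$ completes the argument. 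The main obstacle is precisely this ad hoc case: Lemma~\ref{LemmahilbertfunctionforGwith_3=3} fails to supply the Hilbert equality for $G$ when $a=b+2$, so the uniform induction must be supplemented by an explicit linear-form computation; the remainder is bookkeeping via Propositions~\ref{Proposition2.7} and~\ref{Proposition3.1} together with Lemmas~\ref{LemmahilbertfunctionforGwith_3=3} and~\ref{lemmakeyforc=3}.
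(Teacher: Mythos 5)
Your reduction is exactly the paper's: the same tower $A_1=G/(0\colon_G y)$ and $A_\beta=A_{\beta-1}/(0\colon_{A_{\beta-1}}y)$ from Proposition~\ref{Proposition3.1}, the same use of Proposition~\ref{Proposition2.7} fed by Lemma~\ref{lemmakeyforc=3}(3) for the inductive steps and by Lemma~\ref{LemmahilbertfunctionforGwith_3=3} for the base case, and the same identification of $a=b+2$, $\beta=1$ (with $a+b$ even) as the one configuration the bookkeeping does not reach. Where you genuinely diverge is in that residual case. The paper splits it by $\gamma$: for $\gamma=1$ it computes the Macaulay dual generator $F=x^{b+1}y^{b-2}z^{2}+x^{2}y^{2b-2}z+x^{3-b}y^{3b-2}$ and shows that $\Hess^{b}_{\mathcal{B}}(F)$ is anti-triangular with nonzero anti-diagonal; for $\gamma=2$ it writes the matrix of $\times(x+y+z)$ in explicit monomial bases and observes it is lower triangular with units on the diagonal. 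You instead propose a single elimination argument with $L=x-y+z$, reducing to $[K[x,z]/J]_{b+1}=0$ for $J$ the image of $I_1$ under $y\mapsto x+z$; this treats both values of $\gamma$ uniformly and is the same technique the paper itself uses in Proposition~\ref{Propositionb=2} and in Case~2, Subcase~1 of Proposition~\ref{Propositionforb=3}. I checked that the required memberships hold, so your route is valid. One detail you should make explicit in a full write-up: the monomial $x^{b-1}z^{2}$ is killed neither by $z^{3}$ nor by the propagation from $(x+z)^{b}-x^{\alpha}z^{\gamma}$; you need the image of the generator $y^{b-1}z^{3-\gamma}$ (multiplied by $z^{\gamma-1}$) to get it, after which $x^{b}z$ and $x^{b+1}$ follow by multiplying the monic degree-$b$ polynomial by $z$ and by $x$. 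Your approach buys uniformity in $\gamma$ and avoids the Hessian computation; the paper's argument for $\gamma=1$ is heavier but records the dual generator explicitly.
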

\begin{proof}
Set $G= R/(x^a, y^b-x^{\alpha} z^{\gamma}, z^3)$ and $A_\beta=R/I_\beta.$ By Proposition~\ref{Proposition3.1}, one has
$$ A_1 =\frac{G}{(0\colon_{G} y)}\quad \text{and}\quad A_\beta =\frac{A_{\beta -1}}{(0\colon_{A_{\beta-1}} y)},$$
for all $2\leq \beta\leq b-1$. Notice that if $a\geq b+2$ and $\beta=a-b-1\leq b-1$, then the socle degree of $A_{\beta}$ is odd. By Proposition~\ref{Proposition2.7} and Lemma~\ref{lemmakeyforc=3}, it is enough to show that $A_1$ has the WLP. We consider the following cases:

\noindent{\bf \underline{Case 1: $a+b$ is odd.}} Then the socle degree of $G$ is odd. Since $G$ is an Artinian complete intersection algebra of codimension 3, $G$ has the WLP \cite[ Corollary 2.4]{HMNW2003}. Thus it follows that $A_1$ has the WLP by  Proposition~\ref{Proposition2.7}(i).

\noindent{\bf \underline{Case 2: $a+b$ is even.}}  If $a-b\geq 4$, then $A_1$ has the WLP by Proposition~\ref{Proposition2.7}(ii) and Lemma~\ref{LemmahilbertfunctionforGwith_3=3}. As $a+b$ is even, hence we only consider the case where $a=b+2.$ Firstly, we will prove that $A_1$ has the WLP for $\gamma=1.$  More precisely, consider the ideal
$$I=(x^{b+2}, y^b-x^{b-1} z, z^3, x^{3}y^{b-1}, y^{b-1}z^2)\subset R.$$
In this case, we have  $k=b$ and $H_{R/I}(b)= 3b-1$ by Lemma~\ref{lemmakeyforc=3}.
A $K$-linear basis of $[R/I]_{b}$ is $\mathcal{B} =\mathcal{B}_1\sqcup \mathcal{B}_2\sqcup \BB_3,$
where
\begin{align*}
\mathcal{B}_1 &= \{ u_i =x^{b+1-i} y^{i-1} \mid i=1,2,\ldots,b+1 \}\\
\mathcal{B}_2 &= \{ u_{b+1+i} =x^{b-1-i} y^{i}z \mid i=1,2,\ldots,b-1 \}\\
\mathcal{B}_3 &= \{ u_{2b+i} =x^{b-1-i} y^{i-1}z^2 \mid i=1,2,\ldots,b-1 \}.
\end{align*}
On the other hand, the Macaulay dual generator of $R/I$ is
$$F=x^{b+1}y^{b-2}z^{2}+ x^{2}y^{2b-2}z+x^{3-b}y^{3b-2},$$
where the last monomial does not appear in $F$  if $b>3.$ To prove the proposition, by Proposition~\ref{Proposition2.8_Hessian}, it is enough to show that $\Hess^{b}_\mathcal{B}(F)=\bmt{M_{i,j}}$
has nonzero determinant. 
A straightforward computation shows that
\begin{align*}
M_{i,j}&=(u_i\cdot u_j)\circ F=\begin{cases}
x&\text{if}\; i+j=3b\\
0&\text{if}\; i+j\geq 3b+1.
\end{cases}
\end{align*}
It follows that $\Hess^{b}_\mathcal{B}(F)$
has nonzero determinant. 

It remains to show that $R/I$ has the WLP for the case $\gamma=2$. In this case, we have
$$I=(x^{b+2}, y^b-x^{b-2} z^2, z^3, x^{4}y^{b-1}, y^{b-1}z).$$
It follows that $k=b$ and 
$$H_{R/I}(b)=H_{R/I}(b+1)=3b-2$$
by Lemma~\ref{lemmakeyforc=3}.  Set $L=x+y+z$. By Proposition~\ref{Proposition2.5}, it is enough to show that
$$\times L: [R/I]_{b}\longrightarrow [R/I]_{b+1}$$
is an isomorphism. To do it, let $\mathcal{B}$ and $\mathcal{B}^\prime$ be the $K$-linear bases of $[R/I]_{b}$ and  $[R/I]_{b+1}$, respectively and let $M$ be the matrix representation of $\times L$ with respect to these bases. Write $\mathcal{B} =\mathcal{B}_1\sqcup \mathcal{B}_2\sqcup \BB_3,$
where
\begin{align*}
\mathcal{B}_1 &= \{ u_i =x^{b+1-i} y^{i-1} \mid i=1,2,\ldots,b \}\\
\mathcal{B}_2 &= \{ v_i=x^{b-i} y^{i-1}z \mid i=1,2,\ldots,b-1 \}\\
\mathcal{B}_3 &= \{ w_i =x^{b-1-i} y^{i-1}z^2 \mid i=1,2,\ldots,b-1 \}
\end{align*}
and
$\mathcal{B}^\prime =\mathcal{B}_1^\prime\sqcup \mathcal{B}_2^\prime\sqcup \BB_3^\prime,$
where
\begin{align*}
\mathcal{B}_1^\prime &= \{ u_i^\prime =x^{b+2-i} y^{i-1} \mid i=1,2,\ldots,b \}\\
\mathcal{B}_2^\prime &= \{ v_i^\prime=x^{b+1-i} y^{i-1}z \mid i=1,2,\ldots,b-1 \}\\
\mathcal{B}_3^\prime &= \{ w_i^\prime =x^{b-i} y^{i-1}z^2 \mid i=1,2,\ldots,b-1 \}.
\end{align*}
It is easy to see that
\begin{align*}
\times L(u_i)&=x^{b+2-i}y^{i-1} +x^{b+1-i}y^{i} +x^{b+1-i}y^{i-1}z\\
\times L(v_i)&=x^{b+1-i}y^{i-1}z +x^{b-i}y^{i}z +x^{b-i}y^{i-1}z^2\\
\times L(w_i)&=x^{b-i}y^{i-1}z^2 +x^{b-1-i}y^{i}z^2
\end{align*}
and thus $M$ is a lower trianguler matrix and all the entries on the main diagonal are one. It follows that $\det(M)=1.$ 
\end{proof}

\begin{pro}\label{Proposition3.13}
Assume that  $ 1\leq \alpha\leq a-1, 1\leq \beta\leq b-1$  and $1\leq \gamma\leq 2$ such that $\alpha+\gamma=b$. If   $a\leq 2b-2$, then the ideal
$$I_\beta=(x^a, y^b-x^{\alpha} z^\gamma, z^3, x^{a-\alpha}y^{b-\beta}, y^{b-\beta}z^{3-\gamma})\subset R$$
has the WLP, whenever $a-b+1\leq \beta\leq  b-1$.
\end{pro}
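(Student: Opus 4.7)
The plan is to mirror the strategy of Propositions~\ref{Propositionforbetalarge} and~\ref{Proposition3.6}: combine the successive quotient structure of Proposition~\ref{Proposition3.1} with Proposition~\ref{Proposition2.7} to reduce to a Hessian calculation carried out only when the socle degree is odd.

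\textbf{Setup and reduction.} Let $G=R/(x^a,y^b-x^\alpha z^\gamma,z^3)$; being a codimension three Artinian complete intersection in characteristic zero, $G$ has the WLP by \cite{HMNW2003}. Proposition~\ref{Proposition3.1} yields $A_1=G/(0\colon_G y)$ and $A_\beta=A_{\beta-1}/(0\colon_{A_{\beta-1}} y)$ for $\beta\geq 2$, and the socle degree of $A_\beta$ is $D_\beta=a+b-\beta$. Since $D_{\beta-1}=D_\beta+1$, Proposition~\ref{Proposition2.7}(i) shows that once the WLP is known for $A_{\beta-1}$ together with $D_{\beta-1}$ odd (equivalently, $D_\beta$ even), it passes to $A_\beta$. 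Thus it suffices to prove the WLP for every $\beta$ in the range $a-b+1\leq\beta\leq b-1$ with $D_\beta$ odd. The only boundary concern is when the smallest $\beta$ in the range has $D_\beta$ even; this happens precisely when $a=b-1$ (which forces $\gamma=2$) and $\beta=1$, in which case $D_G=a+b=2b-1$ is odd, so Proposition~\ref{Proposition2.7}(i) applied directly to $G$ delivers the WLP for $A_1$.

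\textbf{Direct Hessian calculation.} Fix $\beta$ with $D:=a+b-\beta$ odd and set $k=(D-1)/2$. By Proposition~\ref{Proposition2.8_Hessian}(i), the WLP of $A_\beta$ is equivalent to $\det\Hess^k_\mathcal{B}(F)\neq 0$, where $F$ is the Macaulay dual generator of Proposition~\ref{Proposition3.1}(ii) and $\mathcal{B}$ is any $K$-basis of $[A_\beta]_k$ (of dimension given by Lemma~\ref{lemmakeyforc=3}). Since $c=3$ and $a\leq 2b-2$, the index $m$ of Proposition~\ref{Proposition3.1}(ii) equals $1$ throughout the range, so
$$F=x^{a-1}y^{b-1-\beta}z^{2}+x^{a-1-\alpha}y^{2b-1-\beta}z^{2-\gamma}.$$
I would then choose a monomial basis $\mathcal{B}=\mathcal{B}_0\sqcup\mathcal{B}_1\sqcup\mathcal{B}_2$ stratified by the $z$-exponent and ordered inside each stratum by the $y$-exponent, following the template of the proofs of Propositions~\ref{Propositionforbetalarge} and~\ref{Proposition3.6}.

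\textbf{Anti-triangular pattern and main obstacle.} With this ordering, the expected structure of $\Hess^k_\mathcal{B}(F)$ is anti-triangular: each anti-diagonal entry is a single variable $x$, $y$, or $z$, coming from the unique ``complementary'' pairing of basis elements that differentiates $F$ down to exactly one of its two monomials and leaves one free variable; every entry strictly below the anti-diagonal vanishes because the corresponding product of basis elements already exceeds a generator of $I_\beta$, typically through a factor $z^3$, $x^a$, or $y^{2b-\beta}$. The determinant is then, up to sign, a product of nonzero linear forms, which gives the WLP. The main obstacle is the case analysis: one must carry out the verification separately for $\gamma=1$ and $\gamma=2$ and handle a few boundary values ($a\in\{b-1,b\}$ and $\beta=a-b+1$) where $F$ takes a distinguished shape; but once the correct basis is fixed, the vanishings below the anti-diagonal are forced by the numerical inequalities coming from $\beta\geq a-b+1$ and $a\leq 2b-2$, exactly as in the earlier arguments.
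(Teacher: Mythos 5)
Your reduction is sound and agrees with the paper's: Proposition~\ref{Proposition3.1} combined with Proposition~\ref{Proposition2.7}(i) does reduce the statement to those $\beta$ in the range for which $D=a+b-\beta$ is odd (and since $D=2b-1$ when $\beta=a-b+1$, the boundary worry you raise only materialises for $a=b-1$, $\beta=1$, which you dispose of correctly via $G$). Your formula for $F$ with $m=1$ is also correct. The gap is in the core of the argument: the claimed anti-triangular structure of $\Hess^{k}_{\mathcal{B}}(F)$ fails precisely in the hardest cases of this proposition. Take $a=b=3$, $\beta=1$, $\gamma=1$ (so $\alpha=2$), which satisfies every hypothesis: then $F=x^{2}yz^{2}+y^{4}z$, $k=2$, and $[R/I_1]_2$ has monomial basis $\{x^2,xy,y^2,xz,yz,z^2\}$, stratified and ordered as you prescribe. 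One computes $(xy\cdot yz)\circ F=(xy^{2}z)\circ F=0$ and $(y^{2}\cdot xz)\circ F=0$, so the anti-diagonal has vanishing entries, while $(xz\cdot yz)\circ F=x$, $(xz\cdot xz)\circ F=y$ and $(z^{2}\cdot xy)\circ F=x$ are nonzero entries lying strictly below the anti-diagonal. The underlying reason is that $F$ has two monomials which, when $\beta$ sits at the bottom of the range (so the two monomials of $F$ are ``close''), both contribute entries near the anti-diagonal; the inequalities $\beta\geq a-b+2$ and $a\leq 2b-3$ that kill the cross terms in Proposition~\ref{Propositionforbetalarge} are not available here. So the determinant is not a product of linear forms read off an anti-diagonal, and its nonvanishing genuinely has to be computed.

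This is exactly why the paper takes a different route in the odd-socle-degree cases: using Lemma~\ref{lemmakeyforc=3} to see that $H_{R/I_\beta}(k)=H_{R/I_\beta}(k+1)$, it writes down the matrix of $\times(x+y+z)\colon[R/I_\beta]_k\to[R/I_\beta]_{k+1}$ with respect to explicit monomial bases, exhibits a block structure, and reduces the determinant via \eqref{determinantofBlockmatrix} either to a unitriangular matrix (Case~1, Subcase~2, and Case~2) or, in the subcase $\beta=1$, $a=b$, to the invertibility of a banded Toeplitz matrix with entries $3$ and $1$, for which it must invoke \cite[Lemma~3.4]{AN2018}. The need for that external determinant lemma is a reliable sign that no monomial ordering renders the Hessian anti-triangular there. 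To repair your argument you would have to either evaluate these nontrivial determinants directly or switch, as the paper does, to the multiplication-matrix computation.
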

\begin{proof}	
Notice that the socle degree of $R/I_\beta$ is $D=a+b-\beta.$ Therefore, if $\beta=a-b+1\geq 1$, then $ D=2b-1$ is odd. To prove that $R/I$ has the WLP for all $a-b+1\leq\beta \leq b-1$, by Propositions~\ref{Proposition2.7}(i) and~\ref{Proposition3.1}, it is enough to prove that $R/I$ has the WLP whenever $a+b-\beta$ is odd.

Now let $\beta\geq 1$ be an integer such that $a-b+1\leq\beta \leq b-1$ and $a+b-\beta$ is odd. In this case, one has $k=\frac{a+b-\beta-1}{2}$. We will prove that $R/I$ has the WLP by considering the following cases:

\noindent{\bf \underline{Case 1: $\beta= a-b+1$.}} There are the following two subcases.

\noindent{\bf \underline{Subcase 1: $\beta=1$.}} More precisely, we consider the ideal
\begin{align*}
I=\begin{cases}
(x^a,y^a-x^{a-1} z,z^3,xy^{a-1},y^{a-1}z^{2})&\text{if}\quad \gamma=1\\
(x^a,y^a-x^{a-2} z^{2},z^3,x^2y^{a-1},y^{a-1}z)&\text{if}\quad \gamma=2.
\end{cases}
\end{align*}
Hence $k=a-1$ and by Lemma~\ref{lemmakeyforc=3}, one has
$$H_{R/I}(a-1)= H_{R/I}(a) =3a-3.$$ 
Set $L= x+y+z$.  By Proposition~\ref{Proposition2.5}, it is enough to show that
$$\times L: [R/I]_{a-1}\longrightarrow [R/I]_{a}$$
is an isomorphism. To do it, let $\mathcal{B}$ and $\mathcal{B}^\prime$ be the $K$-linear bases of $[R/I]_{a-1}$ and $[R/I]_{a}$, respectively and we will prove that the matrix representation $M$ of $\times L$ with respect to these bases  has nonzero determinant.  We do it for the case $\gamma =1$. The case $\gamma=2$ is similarly proved. In this case, we write $\mathcal{B} =\mathcal{B}_1\sqcup \mathcal{B}_2\sqcup \BB_3,$
where 
\begin{align*}
\mathcal{B}_1 &= \{ u_{i} =x^{a-i} y^{i-1} \mid i=1,2,\ldots,a \}\\
\mathcal{B}_2&= \{ v_i =x^{a-1-i} y^{i-1}z \mid i=1,2,\ldots,a-1 \}\\
\mathcal{B}_3&= \{ w_i =x^{a-2-i} y^{i-1}z^2 \mid i=1,2,\ldots,a-2 \}.
\end{align*}
and  $\mathcal{B}^\prime =\mathcal{B}_1^\prime\sqcup \mathcal{B}_2^\prime\sqcup \BB_3^\prime,$
where 
\begin{align*}
\mathcal{B}_1^\prime  &= \{ u_{i}^\prime  =x^{a-i} y^{i-1}z \mid i=1,2,\ldots,a \}\\
\mathcal{B}_2^\prime &= \{ v_i^\prime  =x^{a-1-i} y^{i-1}z^2 \mid i=1,2,\ldots,a-1 \}\\
\mathcal{B}_3^\prime &= \{ w_i^\prime  =x^{a-i} y^{i} \mid i=1,2,\ldots,a-2 \}.
\end{align*}
It is easy to see that
\begin{align*}
\times L(u_i)&=x^{a+1-i}y^{i-1} +x^{a-i}y^{i} +x^{a-i}y^{i-1}z\\
\times L(v_i)&=x^{a-i}y^{i-1}z +x^{a-1-i}y^{i}z +x^{a-1-i}y^{i-1}z^2\\
\times L(w_i)&=x^{a-1-i}y^{i-1}z^2 +x^{a-2-i}y^{i}z^2.
\end{align*}
It follows that
\begin{align*}
M=\bmt{ \Id &\vdots & A_{12}&\vdots & 0\\\cdots & \cdots &\cdots&\cdots&\cdots\\ 0 & \vdots & \Id& \vdots & A_{23}\\\cdots & \cdots &\cdots&\cdots&\cdots\\ A_{31} & \vdots &0& \vdots & 0},
\end{align*}
where 
{\small \begin{align*}
	A_{12}=\left[\begin{array}{c c c c c} 1&0&\cdots & 0 &0 \\1&1&\cdots & 0 &0 \\\vdots &\vdots &\ddots & \vdots &\vdots\\ 0&0&\cdots & 1 &0\\ 0&0&\cdots & 1 &1\\ \hline 0&0&\cdots & 0 &0 \end{array}   \right] ,
	\; A_{23}=\left[\begin{array}{c c c c c} 1&0&\cdots & 0 &0\\1&1&\cdots & 0 &0 \\\vdots &\vdots &\ddots & \vdots &\vdots\\ 0&0&\cdots & 1 &0 \\0&0&\cdots & 1 &1\\\hline  0&0&\cdots & 0 &1 \end{array}   \right], \;
	A_{31} =\left[\begin{array}{c c c c c | c c }  1&1&\cdots & 0 &0&0&0  \\0&1&\cdots & 0 &0 &0&0\\\vdots &\vdots &\ddots & \vdots &\vdots& \vdots &\vdots\\ 0&0&\cdots &1& 1 &0&0\\ 0&0&\cdots &0& 1 &1&0 \end{array}   \right].
	\end{align*}}
By using \eqref{determinantofBlockmatrix}, one has 
$$\det (M) =\det (A_{31}A_{12}A_{23}) =\det \bmt{3&1&0&\cdots & 0 &0&0 \\3&3&1&\cdots & 0 &0 &0\\ 1&3&3&\cdots & 0 &0 &0\\\vdots &\vdots&\vdots &\ddots & \vdots &\vdots& \vdots \\ 0&0&0&\cdots &3&1&0\\ 0&0&0&\cdots &3&3&1\\ 0&0&0&\cdots & 1 &3&3 } \neq 0,$$
since the last matrix is a Toeplitz matrix which is invertible by \cite[Lemma 3.4]{AN2018}.

\noindent{\bf \underline{Subcase 2: $\beta= a-b+1\geq 2$.}} More precisely, we consider the ideal
\begin{align*}
I=\begin{cases}
(x^a,y^b-x^{b-1}z,z^3,x^\beta y^{b-\beta},y^{b-\beta}z^2) &\text{if}\quad \gamma =1\\
(x^a,y^b-x^{b-2}z^2,z^3,x^{\beta+1} y^{b-\beta},y^{b-\beta}z) &\text{if}\quad \gamma =2.
\end{cases}
\end{align*}
In this case, one has $k=b-1$ and
$$H_{R/I}(b)=H_{R/I}(b-1)=\begin{cases}
3b-\beta-1&\text{if}\quad \gamma=1\\
3b-2\beta &\text{if}\quad \gamma=2
\end{cases}$$
by Lemma~\ref{lemmakeyforc=3}. Let $\mathcal{B}$ and $\mathcal{B}^\prime$ be a $K$-linear basis of $[R/I]_{b-1}$ and $[R/I]_{b}$, respectively. Set $L= x+y+z$.  By Proposition~\ref{Proposition2.5}, it is enough to show that
$$\times L: [R/I]_{b-1}\longrightarrow [R/I]_{b}$$
is an isomorphism, or equivalently the matrix representation $M$ of $\times L$ with respect to these bases  has nonzero determinant. To do it, we first consider the case where $\gamma=1.$  In this case, we write $\mathcal{B} =\mathcal{B}_1\sqcup \mathcal{B}_2\sqcup \BB_3,$
where 
\begin{align*}
\mathcal{B}_1 &= \{ u_{i} =x^{b-i} y^{i-1} \mid i=1,2,\ldots,b \}\\
\mathcal{B}_2&= \{ v_i =x^{b-1-i} y^{i-1}z \mid i=1,2,\ldots,b-1 \}\\
\mathcal{B}_3&= \{ w_i =x^{b-2-i} y^{i-1}z^2 \mid i=1,2,\ldots,b-\beta \}
\end{align*}
and  $\mathcal{B}^\prime =\mathcal{B}_1^\prime\sqcup \mathcal{B}_2^\prime\sqcup \BB_3^\prime,$
where 
\begin{align*}
\mathcal{B}_1^\prime  &= \{u_i^\prime \mid 1\leq i\leq b   \}\quad \text{where}\quad  u_i^\prime=\begin{cases}
x^{b+1-i} y^{i-1}&\text{if}\;i=1,2,\ldots,b-\beta \\  x^{b-i} y^{i}&\text{if}\; i=b-\beta+1,\ldots,b 
\end{cases}\\
\mathcal{B}_2^\prime &= \{  v_{i}^\prime  =x^{b-1-i} y^{i}z \mid i=1,2,\ldots,b-1 \}\\
\mathcal{B}_3^\prime &= \{ w_i^\prime  =x^{b-1-i} y^{i-1}z^2 \mid i=1,2,\ldots,b-\beta \}.
\end{align*}
Since $z^3=0$ in $R/I$, it is easy to see that 
\begin{align*}
\times L(u_i)&=x^{b+1-i}y^{i-1} +x^{b-i}y^{i} +x^{b-i}y^{i-1}z\\
\times L(v_i)&=x^{b-i}y^{i-1}z +x^{b-1-i}y^{i}z +x^{b-1-i}y^{i-1}z^2\\
\times L(w_i)&=x^{b-1-i}y^{i-1}z^2 +x^{b-2-i}y^{i}z^2.
\end{align*}
It follows that
$$M=\bmt{ A_{11} &\vdots & 0&\vdots & 0\\\cdots & \cdots &\cdots&\cdots&\cdots\\ A_{21} & \vdots & A_{22}& \vdots & 0\\\cdots & \cdots &\cdots&\cdots&\cdots\\ 0& \vdots & A_{32}& \vdots & A_{33}},$$
where \begin{align*}
A_{22} =\left[\begin{array}{c c c c c} 1&1&\cdots & 0 &0  \\0&1&\cdots & 0 &0 \\\vdots &\vdots &\ddots & \vdots  &\vdots\\ 0&0&\cdots & 1 &1\\ 0&0&\cdots & 0 &1 \end{array}   \right]\quad\text{and}\quad A_{33} =\left[\begin{array}{c c c c c} 1&0&\cdots & 0 &0  \\1&1&\cdots & 0 &0 \\\vdots &\vdots &\ddots & \vdots  &\vdots\\ 0&0&\cdots & 1 &0\\ 0&0&\cdots & 1 &1 \end{array}   \right].
\end{align*}
By \eqref{determinantofBlockmatrix}, we get
\begin{align*}
\det (M) =\det (A_{33}) \det (A_{22}) \det (A_{11}) =\det (A_{11})=1
\end{align*}
since
\begin{align*}
&A_{11}=\left[\begin{array}{c c c c c|c c c c c } 1&0&\cdots & 0 &0&0&0&\cdots & 0 &0 \\1&1&\cdots & 0 &0 &0&0&\cdots & 0 &0\\\vdots &\vdots &\ddots & \vdots &\vdots&\vdots &\vdots &\ddots & \vdots &\vdots \\ 0&0&\cdots & 1 &0 &0&0&\cdots & 0 &0\\0&0&\cdots & 1 &1 &0&0&\cdots & 0 &0\\ \hline 0&0&\cdots & 0 &0& 1&1 &\cdots & 0 &0\\  0&0&\cdots & 0 &0 &0&1&\cdots&0  &0\\\vdots &\vdots &\ddots & \vdots &\vdots&\vdots &\vdots &\ddots & \vdots &\vdots\\ 0&0&\cdots & 0 &0 &0&0&\cdots & 1 &1\\0&0&\cdots & 0 &0 &0&0&\cdots & 0 &1  \end{array}   \right].
\end{align*}
To complete this subcase, we consider the case where $\gamma=2$. In this case, we write $\mathcal{B} =\mathcal{B}_1\sqcup \mathcal{B}_2\sqcup \BB_3,$
where 
\begin{align*}
\mathcal{B}_1 &= \{ u_{i} =x^{b-i} y^{i-1} \mid 1\leq i\leq b \}\\
\mathcal{B}_2&= \{ v_i =x^{b-1-i} y^{i-1}z \mid  1\leq i\leq b-\beta \}\\
\mathcal{B}_3&= \{ w_i =x^{b-2-i} y^{i-1}z^2 \mid  1\leq i\leq b-\beta \}
\end{align*}
and  $\mathcal{B}^\prime =\mathcal{B}_1^\prime\sqcup \mathcal{B}_2^\prime\sqcup \BB_3^\prime,$
where 
\begin{align*}
\mathcal{B}_1^\prime  &= \{u_i^\prime \mid 1\leq i\leq b   \}\quad \text{where}\quad  u_i^\prime=\begin{cases}
x^{b+1-i} y^{i-1}&\text{if}\;i=1,2,\ldots,b-\beta \\  x^{b-i} y^{i}&\text{if}\; i=b-\beta+1,\ldots,b 
\end{cases}\\
\mathcal{B}_2^\prime &= \{  v_{i}^\prime  =x^{b-i} y^{i-1}z \mid i=1,2,\ldots,b-\beta \}\\
\mathcal{B}_3^\prime &= \{ w_i^\prime  =x^{b-1-i} y^{i-1}z^2 \mid i=1,2,\ldots,b-\beta \}.
\end{align*}
Since $z^3=0$ in $R/I$, it is easy to see that 
\begin{align*}
\times L(u_i)&=x^{b+1-i}y^{i-1} +x^{b-i}y^{i} +x^{b-i}y^{i-1}z\\
\times L(v_i)&=x^{b-i}y^{i-1}z +x^{b-1-i}y^{i}z +x^{b-1-i}y^{i-1}z^2\\
\times L(w_i)&=x^{b-1-i}y^{i-1}z^2 +x^{b-2-i}y^{i}z^2.
\end{align*}
Therefore 
$$M=\bmt{ A_{11} &\vdots & 0&\vdots & 0\\\cdots & \cdots &\cdots&\cdots&\cdots\\ A_{21} & \vdots & A_{22}& \vdots & 0\\\cdots & \cdots &\cdots&\cdots&\cdots\\ 0& \vdots & A_{32}& \vdots & A_{33}},$$
where $A_{11}, A_{22}$ and $A_{33}$ are the same forms as in the case $\gamma=1.$
By \eqref{determinantofBlockmatrix}, one has
\begin{align*}
\det (M) =\det (A_{33}) \det (A_{22}) \det (A_{11}) =1.
\end{align*}

\noindent{\bf \underline{Case 2: $\beta\geq  a-b+3$.}}
In this case,  one has $k=\frac{a+b-\beta-1}{2}$ and
$$H_{R/I}(k+1)=H_{R/I}(k)=\begin{cases}
2a+b-3\beta +2&\text{if}\quad \gamma=1\\
a+2b-3\beta+2 &\text{if}\quad \gamma=2
\end{cases}$$
by Lemma~\ref{lemmakeyforc=3}. Let $\mathcal{B}$ and $\mathcal{B}^\prime$ be a $K$-linear basis of $[R/I]_{k}$ and $[R/I]_{k+1}$, respectively. Set $L= x+y+z$.  By Proposition~\ref{Proposition2.5}, it is enough to show that
$$\times L: [R/I]_{k}\longrightarrow [R/I]_{k+1}$$
is an isomorphism, or equivalently the matrix representation $M$ of $\times L$ with respect to these bases  has nonzero determinant. To do it, we consider the case where $\gamma=1.$ The case $\gamma=2$ is similarly proved, even more simple. In the case, the ideal 
$$I=(x^a,y^b-x^{b-1}z,z^3,x^{a-b+1} y^{b-\beta},y^{b-\beta}z^2).$$
Clearly,  since $\beta \geq a-b+3, a+1-\beta\leq k\leq b-2\leq a-2$. Therefore, it is easy to show that a $K$-linear basis of $[R/I]_{k}$ is $\mathcal{B} =\mathcal{B}_1\sqcup \mathcal{B}_2\sqcup \mathcal{B}_3,$
where
\begin{align*}
\mathcal{B}_1 &= \{ u_i \mid 1\leq i\leq a-\beta +1 \}\\
\mathcal{B}_2 &= \{ v_i \mid 1\leq i\leq a-\beta +1 \}\\
\mathcal{B}_3 &= \{ w_i =x^{k-1-i} y^{i-1} z^2\mid i=1,2,\ldots, b-\beta  \}.
\end{align*}
where 
\begin{align*}
u_i&=\begin{cases}
x^{k+1-i} y^{i-1} &\text{if}\quad 1\leq i\leq b-\beta\\
x^{a-\beta+1-i} y^{k-a+\beta+i -1} &\text{if}\quad b-\beta +1\leq i\leq a-\beta+1
\end{cases}\\
v_i&=\begin{cases}
x^{k-i} y^{i-1}z &\text{if}\quad 1\leq i\leq b-\beta\\
x^{a-\beta+1-i} y^{k-a+\beta+i -2} z&\text{if}\quad b-\beta +1\leq i\leq a-\beta+1
\end{cases}
\end{align*}
and a $K$-linear basis of $[R/I]_{k+1}$ is $\mathcal{B}^\prime =\mathcal{B}_1^\prime\sqcup \mathcal{B}_2^\prime\sqcup \mathcal{B}_3^\prime,$
where
\begin{align*}
\mathcal{B}_1^\prime &= \{ u_i^\prime \mid 1\leq i\leq a-\beta +1 \}\\
\mathcal{B}_2^\prime &= \{ v_i^\prime \mid 1\leq i\leq a-\beta +1 \}\\
\mathcal{B}_3^\prime &= \{ w_i^\prime =x^{k-i} y^{i-1} z^2\mid i=1,2,\ldots, b-\beta  \}.
\end{align*}
where 
\begin{align*}
u_i^\prime&=\begin{cases}
x^{k+2-i} y^{i-1} &\text{if}\quad 1\leq i\leq b-\beta\\
x^{a-\beta+1-i} y^{k-a+\beta+i} &\text{if}\quad b-\beta +1\leq i\leq a-\beta+1
\end{cases}\\
v_i^\prime&=\begin{cases}
x^{k+1-i} y^{i-1}z &\text{if}\quad 1\leq i\leq b-\beta\\
x^{a-\beta+1-i} y^{k-a+\beta+i -1} z&\text{if}\quad b-\beta +1\leq i\leq a-\beta+1.
\end{cases}
\end{align*}
It follows that 
\begin{align*}
\times L(u_i)&=\begin{cases}
x^{k+2-i}y^{i-1} +x^{k+1-i}y^{i} +x^{k+1-i}y^{i-1}z&\text{if}\quad 1\leq i\leq b-\beta\\
x^{a-\beta+2-i} y^{k-a+\beta+i -1} +x^{a-\beta+1-i} y^{k-a+\beta+i } \\
+x^{a-\beta+1-i} y^{k-a+\beta+i -1}z&\text{if}\quad b-\beta+1\leq i\leq a-\beta+1
\end{cases}\\
\times L(v_i)&=\begin{cases}
x^{k+1-i} y^{i-1}z +x^{k-i} y^{i}z +x^{k-i} y^{i-1}z^2&\text{if}\quad 1\leq i\leq b-\beta\\
x^{a-\beta+2-i} y^{k-a+\beta+i -1} z +x^{a-\beta+1-i} y^{k-a+\beta+i} z \\
+x^{a-\beta+1-i} y^{k-a+\beta+i -1} z^2&\text{if}\quad b-\beta+1\leq i\leq a-\beta+1
\end{cases}\\
\times L(w_i)&=x^{k-i} y^{i-1} z^2 +x^{k-1-i} y^{i} z^2
\end{align*}
and hence
$$M=\bmt{ A_{11} &\vdots & 0&\vdots & 0\\\cdots & \cdots &\cdots&\cdots&\cdots\\ A_{21} & \vdots & A_{22}& \vdots & 0\\\cdots & \cdots &\cdots&\cdots&\cdots\\ 0& \vdots & A_{32}& \vdots & A_{33}}.$$
A simple computation shows that $A_{11}, A_{22}$ and  $A_{33}$ are the invertible matrices. It follows from \eqref{determinantofBlockmatrix} that
\begin{align*}
\det (M) =\det (A_{33}) \det (A_{22}) \det (A_{11})\neq 0.
\end{align*}

\end{proof}

Our second main result is the following.
\begin{Theo}\label{Theorem3.13}
Consider the ideal $I$ as in \eqref{IdealGorenstein}. If one of the $a,b$ and $c$ is equal to 3, then $R/I$ has the WLP.
\end{Theo}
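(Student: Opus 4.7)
The theorem asserts that $R/I$ has the WLP whenever one of $a,b,c$ equals $3$. The plan is to reduce the claim entirely to the propositions already proved in this subsection, so that no new Hessian or multiplication-map computation is required.

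First I would dispose of the case $b = 3$: this is exactly the content of Proposition \ref{Propositionforb=3}. For the remaining case, where $a = 3$ or $c = 3$, I would invoke the symmetry of the ideal $I$ in the variables $x$ and $z$: interchanging $x \leftrightarrow z$ swaps the roles of $(a,\alpha)$ and $(c,\gamma)$, so without loss of generality I may assume $a \geq c$, and hence that $c = 3$.

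With $c = 3$ fixed, the task is to show that every admissible $\beta \in \{1,\ldots,b-1\}$ is covered by Proposition \ref{Propositionforbetasmallandc=3} (which requires $a \geq b+1$ and handles $1 \leq \beta \leq \min\{a-b,\, b-1\}$) or by Proposition \ref{Proposition3.13} (which requires $a \leq 2b-2$ and handles $a-b+1 \leq \beta \leq b-1$). Since $b = \alpha + \gamma$ with $1 \leq \alpha \leq a-1$ and $\gamma \in \{1,2\}$, we have $b \leq a+1$, so only three scenarios occur: $a \in \{b-1,b\}$, where Proposition \ref{Proposition3.13} applies (since $a-b+1 \leq 1$) and alone covers the whole range; $b+1 \leq a \leq 2b-2$, where the ranges $[1,a-b]$ and $[a-b+1,b-1]$ of the two propositions tile $[1,b-1]$; and $a \geq 2b-1$, where $\min\{a-b,\, b-1\} = b-1$ and so Proposition \ref{Propositionforbetasmallandc=3} alone covers the whole range.

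I do not expect a genuine obstacle here: the theorem is essentially a bookkeeping combination of the three propositions above, together with the $x\leftrightarrow z$ symmetry used to reduce $a=3$ to $c=3$. The only point requiring care is verifying that the two parameter ranges in Propositions \ref{Propositionforbetasmallandc=3} and \ref{Proposition3.13} exhaust $\{1,\ldots,b-1\}$ in each of the three scenarios — once that verification is recorded, the proof is complete.
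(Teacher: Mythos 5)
Your proposal is correct and follows essentially the same route as the paper: dispose of $b=3$ via Proposition~\ref{Propositionforb=3}, use the $x\leftrightarrow z$ symmetry to reduce $a=3$ to $c=3$ with $a\geq c$, and then combine Propositions~\ref{Propositionforbetasmallandc=3} and~\ref{Proposition3.13}. The only difference is that you make explicit the verification that the two $\beta$-ranges tile $\{1,\ldots,b-1\}$ in each of the scenarios $a\in\{b-1,b\}$, $b+1\leq a\leq 2b-2$, and $a\geq 2b-1$, which the paper leaves implicit.
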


\begin{proof}
If $b=3$, then $R/I$ has the WLP by Proposition~\ref{Propositionforb=3}. By symmetry of $x$ and $z$, we can always assume that $a\geq c.$ Therefore, it suffices to prove that $R/I$ has the WLP whenever $c=3.$ It follows from Propositions~\ref{Propositionforbetasmallandc=3} and~\ref{Proposition3.13} that $R/I$ has the WLP in this case.
\end{proof}

\section*{Acknowledgments}
The authors thank the referee for a careful reading and useful comments that improved the presentation of the article. 
The first author was partially supported by the grant MTM2016-78623-P. The second author was partially supported by the project ``\`Algebra i Geometria Algebraica" under grant number 2017SGR00585 and  by Vietnam National Foundation for Science and Technology Development (NAFOSTED) under grant number 101.04-2019.07.

\bibliographystyle{plain} 
\bibliography{WLP_Gorenstein3} 


\end{document}